\documentclass[a4paper,11pt, reqno]{amsart}

\usepackage{amsfonts}
\usepackage{amssymb}
\usepackage{amsmath}
\usepackage{graphicx}
\usepackage{wrapfig}
\usepackage{amsthm}
\usepackage{amsrefs}
\usepackage{latexsym}
\usepackage{color}

\setcounter{MaxMatrixCols}{10}

\thispagestyle{empty}
\pagestyle{myheadings}
\theoremstyle{plain}

%% macros
                
\newcommand{\pt}{\partial}

\newcommand{\re}{\mathbb R}
\newcommand{\R}{\mathbb R}

\def\<{\langle }

\renewcommand{\>}{\rangle }

%%%%%%%%%%%%%%%%%%%%%%%%%%%%%%%%
\newcounter{thm}[section]

\newtheorem{theo}[thm]{Theorem}

\newtheorem{lem}[thm]{Lemma}

%%%%%%%%%%%%%%%%%%%%%%%%%%%%%%%%

\theoremstyle{definition}

\newtheorem{rem}{Remark}[section]
\numberwithin{equation}{section}

%-------color abbreviations--------

%\color{green}}
%\renewcommand{\bll}{\color{black}}

%%%%%% TEXT START %%%%%%
\begin{document}
%旧タイトル
%\title[System of cubic NLS equations in 1$d$]{Asymptotic behvavior in time of solution to system of cubic nonlinear
%Schr\"{o}dinger equations in one space dimension} 
\title[System of cubic NLS equations in 1$d$]{Polynomial deceleration for a system of cubic nonlinear 
Schr\"{o}dinger equations in one space dimension} 
\author[N. Kita]{Naoyasu Kita} 
\address{Faculty of Advanced Science and Technology, Kumamoto University, 
Kumamoto, 860-8555, Japan}
\email{nkita@kumamoto-u.ac.jp}
\author[S. Masaki]{Satoshi Masaki}
\address{Department of systems innovation, Graduate school of Engineering Science, Osaka University,
Toyonaka Osaka, 560-8531, Japan}
\email{masaki@sigmath.es.osaka-u.ac.jp} 
\author[J. Segata]{Jun-ichi Segata}
\address{Faculty of Mathematics, Kyushu University, Fukuoka, 819-0395, Japan} 
\email{segata@math.kyushu-u.ac.jp}
\author[K. Uriya]{Kota Uriya}
\address{Department of Applied Mathematics, Faculty of Science, Okayama University of Science, Okayama,
700-0005, Japan}
\email{uriya@xmath.ous.ac.jp}
%\date{}
\subjclass[2010]{Primary 35Q55; Secondary 35A22, 35B40}
\keywords{Nonlinear Schr\"{o}dinger equation, Asymptotic behavior of solutions, 
Long range scattering.}
\maketitle

\begin{abstract}
In this paper, we consider the initial value problem 
of a specific system of cubic nonlinear Schr\"{o}dinger equations. 
Our aim of this research is 
to specify the asymptotic profile of the solution in $L^{\infty}$ as $t \to \infty$. 
It is then revealed that the solution decays slower than a linear solution does.
Further, the difference of the decay rate is a polynomial order.
This deceleration of the decay is due to an amplification effect by the nonlinearity.
This nonlinear amplification phenomena was previously known for several specific systems, however the deceleration of the decay in these results was
by a logarithmic order.
As far as we know, the system studied in this paper is the first model in that the deceleration
in a polynomial order is justified.
%Our result is obtained under certain condition on the coefficients 
%of the nonlinearity, which was not assumed in the original research 
%of \cite{MSU1}.
\end{abstract}

%\markboth{\sc N.KITA, S.MASAKI, J.SEGATA AND K.URIYA}{SYSTEM OF CUBIC NLS EUATION IN 1D}
%
%\centerline {\large{\bf ASYMPTOTIC BEHAVIOR IN TIME OF SOLUTION TO SYSTEM}}
%\centerline{\large{\bf OF CUBIC NONLINEAR SCHR\"{O}DINGER EQUATIONS}}
%\centerline{\large{\bf -- CASE OF POLYNOMIALLY GROWING $\mathcal{F}U(-t)u_2$}} 
%%\centerline{\color{red}{(draft)}\color{black}}
%
%\bigskip
%
%\begin{center}
%\centerline{\large NAOYASU KITA, SATOSHI MASAKI, JUN-ICHI SEGATA }
%\centerline{\large AND  KOTA URIYA}
%\end{center}
%
%\maketitle

%
%\footnote[0]{2010 {\it Mathematics Subject Classification.} 
%Primary 35Q55; Secondary 35A22, 35B40.}
%\footnote[0]{{\it Key Words and Phrases.} 
%Nonlinear Schr\"{o}dinger equation, Asymptotic behavior of solutions, 
%Long range scattering.}

%\bigskip
%
%\noindent \textsc{Abstract}. In this paper, we consider the initial value problem 
%of a system of nonlinear Schr\"{o}dinger equations. Our aim of this research is 
%to specify the asymptotic leading terms of the solution in $L^{\infty}$ as $t \to \infty$. 
%We find that the solution possesses polynomially weakly decaying and growing 
%profiles. Our result is obtained under certain condition on the coefficients 
%of the nonlinearity, which was not assumed in the original research 
%of \cite{MSU1}.
% 
%\bigskip

\section{Introduction}
\subsection{System of cubic NLS equations in one space dimension}
In this paper, we consider the large time behavior of solutions to 
%We consider 
the Cauchy problem of  the following system of cubic nonlinear Schr\"{o}dinger (NLS) equations in one space dimension:
\begin{equation}\label{NLS}
\left\{ 
\begin{aligned}
   & \mathcal{L} u_1  
       = 3 \lambda_1 |u_1|^2 u_1, \quad && t \in \re, x \in \re,\\
   & \mathcal{L} u_2 
       = \lambda_6 (2|u_1|^2 u_2 + u^2_1 \overline{u}_2), \quad && t \in \re, x \in \re,\\
    &u_1(0, x) = u_{1, 0} (x), \quad u_2 (0, x) = u_{2,0} && x \in \re,
\end{aligned}
\right. 
\end{equation}
where $(u_1,u_2) : \re \times \re \to \mathbb{C}^2$ ($j=1,2$) is a pair of unknown functions, 
${{\mathcal L}}= i \partial_t+(1/2)\partial^2_x$, and $\lambda_1$ and $\lambda_6$ are real constants 
satisfying $(\lambda_1 , \lambda_6) \neq (0,0)$ and 
\begin{equation}\label{E:cond}
	(\lambda_6 - \lambda_1) (\lambda_6 - 3 \lambda_1) < 0. 
\end{equation}
We give a precise assumption on the data $(u_{1,0},u_{2,0})$ later.
Throughout the paper, we use the notation $\eta = 3\lambda_1 - 2\lambda_6$ and $\mu = \sqrt{\lambda_6^2 - \eta^2}$.
We remark that \eqref{E:cond} reads as $\lambda_6^2 - \eta^2 >0$, and hence it implies
 that $\mu$ is a real number.
The system \eqref{NLS} is a special case of
\begin{equation}\label{NLSgen}
\left\{ 
\begin{aligned}
   & \mathcal{L} u_1  
       = 3 \lambda_1 |u_1|^2 u_1+ \lambda_2 (2|u_1|^2 u_2 + u^2_1 \overline{u}_2)\\
       	&\qquad\qquad+ \lambda_3 (2u_1|u_2|^2  + \overline{u}_1u^2_2 )+  3 \lambda_4 |u_2|^2 u_2, \\
   & \mathcal{L} u_2 
       = 3 \lambda_5 |u_1|^2 u_1+ \lambda_6 (2|u_1|^2 u_2 + u^2_1 \overline{u}_2)\\
       &\qquad\qquad + \lambda_7 (2u_1|u_2|^2  + \overline{u}_1u^2_2 )+  3 \lambda_8 |u_2|^2 u_2.
\end{aligned}
\right. 
\end{equation}
Classification of systems of this form is introduced in \cites{MSU1,MSU2,Mpre}.

It is widely known that the cubic nonlinearity is critical in one space dimension in view of the large time behavior 
of solutions to several typical dispersive equations such as Schr\"odinger equation and Klein-Gordon equation.
More precisely, the nonlinear effect can be seen in the large time behavior of solutions.
The cubic nonlinearity is critical also for systems of dispersive equations.
There is a variety of types on the large time behavior of solutions to systems, even if we restrict ourselves 
to the systems of the form \eqref{NLSgen}.

To compare with, let us first recall that the asymptotic behavior of 
a solution to the linear Schr\"odinger equation $\mathcal{L} u=0$ is given as
\[
	%u(t) = 
	t^{-\frac12} e^{i\frac{x^2}{2t}-i \frac{\pi}4}  \mathcal{F} [u(0)](\tfrac{x}t). %+ O(t^{-\frac54})
\]
%in $L^\infty$ as $t\to \infty$ if $\mathcal{F} u(0) \in H^2$.
In particular, the decay order of the linear solution in the $L^\infty$-topology is $t^{-1/2}$.

Ozawa \cite{Ozawa} showed that the asymptotic profile of
a small good solution to the cubic equation 
\begin{equation}\label{cNLS}
	\mathcal{L} u=\lambda |u|^2 u, \quad \lambda \in \R
\end{equation}
is given as
\begin{equation}
	t^{-\frac12} e^{i\frac{x^2}{2t}-i \frac{\pi}4}  (\widehat{u_+} e^{-i \lambda |\widehat{u_+}|^2 \log t })(\tfrac{x}t),
\end{equation}
where $u_+$ is a suitable function (See also \cite{GO,HN}).
Notice that, due to the presence of the gauge-invariant cubic nonlinear term,
\emph{a logarithmic phase correction} is involved in the asymptotic profile
(See \cite{KN,NST,Uriya} for a similar result for other models).
The first equation of the system \eqref{NLS} is nothing but the single cubic equation and hence
its asymptotic behavior is given above.

It is also known that the cubic nonlinearity may change the decay of a solution.
One such example is known as the so-called \emph{nonlinear dissipation} phenomenon.
The typical case is \eqref{cNLS} with a complex coefficient $\lambda \in \mathbb{C} \setminus \mathbb{R}$.
When $\Im \lambda>0$ then \eqref{cNLS} is dissipative for the positive time direction,
and it has shown that the small solution decays faster than 
the free Schr\"odinger evolution (see \cite{HLN3,HLN4,Hoshino,Kita2,OS,OU,Sato,Shimomura}).
Note the logarithmic phase correction do not change the decay of solution.

Oppositely, there is a models in which the decay of a solution is decelerated due to the nonlinear effect.
We call this phenomenon as the \emph{(weak) nonlinear amplification}.
A typical example is
\eqref{NLS} with the choice $\lambda_6=\lambda_1 (\neq 0)$ or $\lambda_6 = 3 \lambda_1 (\neq0)$.
The second, third, and fourth authors show in
 \cite{MSU1} that \eqref{NLS} with these two choices admits a solution of which asymptotic profile
is the pair of
\[
		t^{-\frac12} e^{i\frac{x^2}{2t}-i \frac{\pi}4}  (W_1 e^{-3i \lambda_1 |W_1|^2 \log t })(\tfrac{x}t),
\]
and
\[
	t^{-\frac12} e^{i\frac{x^2}{2t}-i \frac{\pi}4}  ((W_2 + W \log t) e^{-3i \lambda_1 |W_1|^2 \log t })(\tfrac{x}t)
%t^{-\frac12}\left\{W\left(\frac{x}{t}\right)\log t+W_2\left(\frac{x}{t}\right)\right\}
%e^{\frac{ix^2}{2t}-i3\lambda_1\left|W_1\left(\frac{x}{t}\right)\right|^2\log t-i\frac{\pi}{4}},
\]
by a suitable pair $(W_1,W_2)$ of functions,
where $W$ is given by 
\begin{align*} 
W=
\left\{
\begin{aligned}
&2\lambda_1W_1\Im\left[W_1\overline{W_2}\right]      &\text{if}\ \lambda_6=\lambda_1, \\
&-6i\lambda_1W_1\Re\left[W_1\overline{W_2}\right] &\ \text{if}\ \lambda_6=3\lambda_1.
\end{aligned}
\right.
\end{align*} 
Besides the logarithmic phase correction,
a \emph{logarithmic amplitude correction}, denoted by $W$, shows up in 
the asymptotic profile of the second component.
One has
\[
	\|u_2(t)\|_{L^\infty} \sim t^{-\frac12}(1+ \|W\|_{L^\infty} \log t) 
\]
as $t\to\infty$.
We remark that the amplitude correction, which makes the decay of the solution slower,
is due to the nonlinear effect between two components of unknowns.
Indeed, $W_1=0$ implies $W=0$, which tells us that, without the presence of the first
component of the solution, the amplitude correction does not occur.
For a generic non-trivial solution, we have $W\neq0$.

As for \eqref{NLS},
the case $(\lambda_6-\lambda_1)(\lambda_6-3\lambda_1)=0$ is a threshold.
Indeed, it is proved in \cite{MSU1} that 
the nonlinear amplification %, deceleration of the decay
do not take place in the case $(\lambda_6-\lambda_1)(\lambda_6-3\lambda_1)>0$.
Further, it is revealed that
 the asymptotic behavior of $u_2$ in this case is a sum of two parts which oscillate in a different way.
The oscillations of the two parts get close each other as $(\lambda_6-\lambda_1)(\lambda_6-3\lambda_1)\downarrow0$
and in the limit case, that is, at the threshold case, they become the same. 
As mentioned below, the asymptotic profile is given as a solution to a corresponding ODE system.
At least in this ODE level,
we see that  the logarithmic amplitude correction
is produced by the coincidence of the oscillations (cf. resonance phenomenon in the theory of ODEs).

Another example of a system in which the nonlinear amplification occurs is
\begin{equation}\label{NLSpre}
\left\{ 
\begin{aligned}
   & \mathcal{L} u_1  
       = 0, \quad && t \in \re, x \in \re,\\
   & \mathcal{L} u_2 
       =  3|u_1|^2 u_1 ,\quad && t \in \re, x \in \re.
\end{aligned}
\right. 
\end{equation}
This system is  also studied in \cite{MSU1} and a similar slowly-decaying solution is found.
However, an ODE analysis shows that
the mechanism of the appearance of the logarithmic amplitude correction is slightly different.
We remark that a similar result is previously known for the corresponding system of Klein-Gordon equations, i.e., \eqref{NLSpre} with $\mathcal{L}=\pt_t^2- \pt_x^2 + 1$ (see \cite{Sunagawa}).

In this paper, we study the other side of the threshold, that is,
the case \eqref{E:cond}, of the system \eqref{NLS}.
It will turn out that there is a stronger deceleration effect.
Namely, the asymptotic profile of the second component involves a \emph{polynomial amplitude correction}.
As far as we know, the deceleration of the time-decay by a polynomial order is not known before at least for 
the Schr\"odinger equations/systems and Klein-Gordon equation/systems.

One obstacle in justifying the existence of such a slowly-decaying solution
comes from the fact that the criticalness of the cubic nonlinearity in one space dimension
is related to the critical decay rate $O(t^{-1/2})$ in $L^\infty(\R)$.
To explain this respect in more detail,
let us consider the power type nonlinear Schr\"odinger equation $\mathcal{L} u = |u|^{p-1} u$, where $p>1$.
By means of the propagator $U(t) = e^{\frac{i}2 t \pt_x^2}$, it is rewritten as
\[
	i\pt_t (U(-t)u(t)) = U(-t) (|u|^{p-1} u).
\]
By the conservation of mass, under the assumption that $u$ decays like $O(t^{-1/2})$ in $L^\infty(\R)$ as $t\to\infty$,
the right hand side becomes an integrable $L^2$-valued function of time on $\R_+$ if $p>3$.
Note that the integrability means that the solution $u(t)$ scatters (See \cite{TY}).
The only-if part, i.e., the failure of scattering for $p\le3$ is well-known (See \cite{Barab,Strauss}).
This is how the cubic nonlinearity appears as a critical index.
The argument also suggests that the cubic nonlinearity is not the critical index but a supercritical one when solutions decay at a slower rate than $O(t^{-1/2})$.
From a technical point of view, the slower decay makes it difficult to obtain a closed estimate.

As for \eqref{NLS}, the special structure of the system enables us to obtain a closed estimate in spite of the presence of the deceleration effect.
Firstly, the first equation of the system is nothing but the single cubic equation \eqref{cNLS}.
Hence, by applying the previous result on the single equation, we obtain the asymptotic behavior of
the first component $u_1$.
We remark that $u_1$ decays at the critical rate, the same rate as the linear solution does.
Secondly, the equation for $u_2$ is linear with respect to $u_2$.
Hence, the critical decay of $u_1$ is sufficient to obtain a global bound on $u_2$.
 
In many case, the asymptotic profile of a solution is given by the corresponding ODE equation/system.
Roughly speaking, the asymptotic profile of a good solution to a system of cubic Schr\"odinger equation
\[
	\mathcal{L} u_\ell = N_\ell(u_1,u_2,\dots,u_L), \quad (\ell=1,2,\dots,L),
\]
where $N_\ell$ is a homogeneous cubic nonlinearities,
is described by a solution $(A_1,A_2,\dots,A_L)=(A_1(\tau;\xi),A_2(\tau;\xi),\dots,A_L(\tau;\xi))$ parametrized by $\xi \in \R$ to
the system of ordinary equations
\begin{equation}\label{ODEgen}
	i A_\ell' = t^{-1} N_\ell(A_1,A_2,\dots,A_L) \quad (\ell=1,2,\dots,L).
\end{equation}
Namely, in some sense,
\[
		u_\ell (t,x) \sim t^{-\frac12} e^{i\frac{x^2}{2t}-i \frac{\pi}4}  A_\ell( t; \tfrac{x}t) \quad (\ell=1,2,\dots,L)
\]
as $t\to\infty$.
As seen below, our main result ensures that this is true for \eqref{NLS} with \eqref{E:cond} (See Remark \ref{R:ODE}).
By the previous results, we see that
this is also true for all systems introduced above, i.e., for the rest case of \eqref{NLS}, and \eqref{cNLS} and \eqref{NLSpre}.
Katayama and Sakoda \cite{KaSa} show that this is true for a class of systems of cubic Schr\"odinger equations.

There are two known systems for which the analysis of the corresponding ODE system suggests
that a deceleration of the decay by a polynomial order takes place.
One is
\begin{equation}\label{E:newODE1}
\left\{
\begin{aligned}
	&\mathcal{L} u_1=
	3 |u_1|^2u_1-3(2u_1|u_2|^2 +\overline{u_1}u_2^2),\\
	&\mathcal{L} u_2=
	3 (2|u_1|^2u_2+u_1^2\overline{u_2})-3|u_2|^2u_2,
\end{aligned}
\right.
\end{equation}
and the other is
\begin{equation}\label{E:newODE2}
\left\{
\begin{aligned}
	&\mathcal{L} u_1=
	3 |u_1|^2u_1-(2u_1|u_2|^2 +\overline{u_1}u_2^2),\\
	&\mathcal{L} u_2=
	(2|u_1|^2u_2+u_1^2\overline{u_2})-3|u_2|^2u_2.
\end{aligned}
\right.
\end{equation}
The corresponding ODE systems are studied in \cite{MSU2} and it is shown that 
the ODE systems admit solutions with polynomial deceleration. 
However, no PDE result is available on the large time behavior of solutions to these two systems.
It is needless to say that these systems  do not possess a good structure as \eqref{NLS} does.

Finally, let us briefly mention the \emph{strong nonlinear amplification} phenomenon.
In the case \eqref{cNLS} with a complex coefficient $\lambda \in \mathbb{C} \setminus \mathbb{R}$,
the  nonlinear dissipation occurs either forward or backward in time, as mentioned above.
For this system, a different type of nonlinear amplification phenomenon takes place in the opposite time direction.
The amplification is stronger than what we consider in this paper:
At the ODE level, any nontrivial solution to
the corresponding ODE equation $i A' = \lambda |A|^2 A$ ($\Im \lambda > 0$) blows up in finite time for positive time direction.
At the PDE level, the first author \cite{Kita1} show that there exists an arbitrarily small (in $L^2$) initial datum which gives a solution 
blowing up in finite time, if $\Im \lambda > 0$ is sufficiently large compared with $|\Re \lambda|$. 
%In the case where the smallness assumption 
%on the initial data is not considered, 
The existence of blow-up solutions is studied also in \cite{CMZhao, CMZheng, KM} without a smallness assumption on the data. 
It is also known that there exists a system of the form \eqref{NLSgen} (with real coefficients) for which strong nonlinear amplification phenomenon of this type occurs (see \cite{MSU1}).

\subsection{Main result}
To state our main result, let us introduce several notations.
For $a \in \re$, $\langle a \rangle := \sqrt{1+|a|^2}$.
We denote $A \lesssim B$ if there exists a constant $C > 0$ 
such that $A \le C B$ holds and $ A \sim B $ if $ A \lesssim B \lesssim A $.
It is also used with subscripts such as $A \lesssim_{a,b} B$ when we
emphasize that the hidden constant $C$ depends on other quantities $a$ and $b$.
For $1 \le q \le \infty$,
let $L^q = L^q ( \re) $ denote the usual Lebesgue space
with the norm 
\begin{equation*}
\left\| f \right\| _{L^q }
  =
  \left\{
  \begin{aligned}
 & \left( \int_{\re} \left| f \left( x\right) \right| ^{q}dx\right) ^{\frac{1}{q}}
 && 1\leq q<\infty,  \\
 & \underset{x\in \re}{\text{ess.sup}}\left| f \left( x\right) \right| 
 && q=\infty.
 \end{aligned}
 \right.
\end{equation*}
$\mathcal{F}$ and $\mathcal{F}^{-1}$ stand for
the Fourier and inverse Fourier transforms, respectively:   
\begin{align*}
&
  (\mathcal{F}f)(\xi )%=\hat{f} 
    := (2\pi)^{-\frac12} 
        \int_{\re} e^{-ix\cdot \xi} f(x) \ dx, &
& (\mathcal{F}^{-1}f)(x)
     := (2\pi)^{-\frac12} \int_{\re} e^{ix\cdot \xi}f(\xi) \ d \xi.
\end{align*}
The weighted Sobolev space $H^{s,m}$ is defined by 
\begin{align*}
H^{s,m} &= \{ f \in \mathcal{S}' \ ; \|f\|_{H^{s,m}} <\infty\},\\
\|f\|_{H^{s,m}}&=\|\langle x \rangle^m 
                       \mathcal{F}^{-1} \langle \xi \rangle^s{{\mathcal F}}f\|_{L_{x}^{2}}, 
\end{align*}
where $ s, m \in \re $. 
For $t\neq0$,
we let $M(t)=e^{\frac{i}{2t} |x| ^{2}}\times$ be a multiplication operator.
We define the dilation operator by 
\begin{equation*}
   \left( D(t)f\right) \left( x\right) 
        =t^{-1/2}f (x/t)e^{-i\frac{\pi}{4}}
\end{equation*}
for $t\neq 0$. It is well-known that the Schr\"{o}dinger 
group $U(t) = \exp (it \partial^2_x /2)$ can be decomposed as 
\begin{equation*}
U\left( t\right) =M(t) D(t) \mathcal{F}M(t).
\end{equation*}
Since $U(-t) = U(t)^{-1}$, we also have 
%\begin{equation*}
$U\left( -t\right) =M(t)^{-1}\mathcal{F}^{-1}D(t)^{-1}M(t)^{-1}$.
%\end{equation*}
The standard generator of  the Galilean transformation is given as 
\begin{equation*}
J(t) =U(t) xU( -t) =x+it\partial_x.
\end{equation*}
Note that the operators $J(t)$ and $\mathcal{L}$
%$i\partial_t +(1/2)\partial^2_x$ 
commute. 
We often simply denote $J$ when the variable $t$ is clear from the context.

Recall that $\eta=3\lambda_1 - 2\lambda_6$ and $\mu = \sqrt{\lambda_6^2-\eta^2}$.
Note that $|\mu\pm i \eta| = |\lambda_6|>0$ holds. 
%Note that \eqref{E:cond} reads  as $\lambda_6^2 - \eta^2 >0$ and hence as $\mu \in \re$.
Our main result is the following.

\begin{theo}\label{1.1}
%Suppose $(\lambda_6 - \lambda_1)(\lambda_6 - 3\lambda_1) < 0$.  
Let $\varepsilon_j := \|u_{j,0}\|_{H^1} + \|u_{j,0}\|_{H^{0,1}} \  
(j=1,2)$. % and let $0<\gamma_1 < \gamma_2 < 1/100$. 
Then there exists $\varepsilon_0 > 0$ % and $K>0$ 
such that for any $u_{j,0} \in H^1 \cap H^{0,1}$ with  
$\varepsilon_1 \le \varepsilon_0$ and without size-restriction on $\varepsilon_2$, 
there exists a unique global solution 
$(u_1,u_2) \in C(\re ; H^1 (\re) \cap H^{0,1}(\re))^2$ to $(\ref{NLS})$ satisfying  
\begin{equation}\label{upper_bound}
\left.
\begin{aligned}
&\|u_1(t)\|_{H^1}+\|Ju_1(t)\|_{L^2} \lesssim 
\varepsilon_1 \langle t \rangle^{C\varepsilon_1^{2}}, \quad 
&& \|u_1(t)\|_{L^{\infty}} \lesssim \varepsilon_1 \langle t \rangle^{-1/2}, \\
&\|u_2(t)\|_{H^1}+\|Ju_2(t)\|_{L^2} 
\lesssim \varepsilon_2 \langle t \rangle^{C\varepsilon_1^{2}}, \quad 
&& \|u_2(t)\|_{L^{\infty}} \lesssim \varepsilon_2 \langle t \rangle^{-1/2 + C\varepsilon_1^2}.
\end{aligned}
\right.
\end{equation}
for any $t \in \re$.
%, where $J=x+it\partial_{x}$ and $\langle t\rangle=\sqrt{1+|t|^{2}}$.. 
Furthermore, there exist two functions $W_1, W_{2} \in L^{\infty}$ such that 
\begin{align}
%& u_1 (t) = t^{-\frac{1}{2}} W_1 (\tfrac{x}{t}) 
%                   e^{\frac{ix^2}{2t} - i3\lambda_1 |W_1(\frac{x}{t})|^2 \log t - i\frac{\pi}{4}}
%                   + O(t^{-\frac{3}{4} + \gamma_1}), \label{asymp1}\\
%& u_2 (t) = t^{-\frac{1}{2}} \tilde{W}_2 (t,\tfrac{x}{t}) 
%                   e^{\frac{ix^2}{2t} - i3\lambda_1 |W_1(\frac{x}{t})|^2 \log t - i\frac{\pi}{4}}
%                   + O(t^{-\frac{3}{4} + \gamma_1}) \label{asymp2}
& u_1 (t) = M(t) D(t) F_1(t)
                   + O(t^{-\frac{3}{4} + C \varepsilon_1^2}), \label{asymp1}\\
& u_2 (t) = M(t) D(t) F_2(t)
                   + O(t^{-\frac{3}{4} + C \varepsilon_1^2}) \label{asymp2}
\end{align}
in $L^{\infty}$ as $t \to \infty$, where
\begin{equation}
F_{1}(t,\xi)=
W_1(\xi)e^{-3i\lambda_{1}|W_1(\xi)|^{2}\log t},
\label{F11}
\end{equation}
\begin{equation}\label{F12}
F_{2}(t,\xi)
= \tilde{W}_2(t,\xi)e^{-3i\lambda_{1}|W_1(\xi)|^{2}\log t}
\end{equation}
and $\widetilde{W}_2(t, \xi)$ is defined by
$\widetilde{W}_2(t, \xi) :=W_2(\xi)$ if $W_1(\xi) = 0$ and
\begin{align*}
	\widetilde{W}_2(t, \xi) 
	&:=
	\frac1{2\mu} 
           \left(( \mu-i\eta) W_2(\xi) 
                    +i \lambda_6 \tfrac{W_1(\xi)^2}{|W_1(\xi)|^2}\overline{W_2 (\xi)} \right)
                 t^{-\mu |W_1(\xi)|^2} \\
 &\quad+ \frac1{2\mu}  
             \left((  \mu+i\eta) W_2 (\xi) 
                      -i \lambda_6 \tfrac{W_1(\xi)^2}{|W_1(\xi)|^2} \overline{W_2 (\xi)}\right)
                     t^{\mu |W_1(\xi)|^2}
\end{align*}
if $W_1(\xi) \neq 0$.
\end{theo}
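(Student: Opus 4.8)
The plan is to pass to the profile functions on the Fourier side and to compare the resulting evolution with the limiting ODE system \eqref{ODEgen}. First I would dispose of the first component. The first equation of \eqref{NLS} is exactly the scalar gauge-invariant cubic equation \eqref{cNLS} with coefficient $3\lambda_1$, so the known theory (\cite{Ozawa}, \cite{HN}) provides, for $\varepsilon_1\le\varepsilon_0$, a unique global $u_1\in C(\re;H^1\cap H^{0,1})$ with $\|u_1(t)\|_{H^1}+\|Ju_1(t)\|_{L^2}\lesssim\varepsilon_1\langle t\rangle^{C\varepsilon_1^2}$ and $\|u_1(t)\|_{L^\infty}\lesssim\varepsilon_1\langle t\rangle^{-1/2}$, together with a function $W_1\in L^\infty$ realizing \eqref{asymp1} with the profile \eqref{F11}; in particular $u_1$ decays at the \emph{critical} rate. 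Writing $\varphi_j(t):=\mathcal F U(-t)u_j(t)$ and using $U(t)=M(t)D(t)\mathcal F M(t)$ together with $\mathcal F M(t)\mathcal F^{-1}\to\mathrm{Id}$, one has the standard cubic factorization, valid for functions $v_1,v_2,v_3$ with $\varphi_{v_k}:=\mathcal F U(-t)v_k$,
\begin{equation*}
\mathcal F U(-t)\bigl(v_1\overline{v_2}v_3\bigr)=\tfrac1t\,\varphi_{v_1}\overline{\varphi_{v_2}}\varphi_{v_3}+\mathcal R,
\end{equation*}
where $\mathcal R$ gains a power $\langle t\rangle^{-1-\delta}$ for some $\delta>0$ at the cost of one extra $J$-derivative on the factors. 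This turns the $u_1$-equation into $i\partial_t\varphi_1=\tfrac{3\lambda_1}{t}|\varphi_1|^2\varphi_1+\mathcal R_1$, and the usual ODE argument gives $\varphi_1(t)-F_1(t)\to0$ in $L^\infty$.

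Next I would treat the second equation, which is \emph{linear} in $(u_2,\overline{u_2})$ with coefficients built from $|u_1|^2$ and $u_1^2$, hence of size $O(\varepsilon_1^2\langle t\rangle^{-1})$ by the critical decay of $u_1$. Local well-posedness in $H^1\cap H^{0,1}$ and an energy estimate closed by Gronwall's inequality then yield a global $u_2$ with $\|u_2(t)\|_{H^1}+\|Ju_2(t)\|_{L^2}\lesssim\varepsilon_2\langle t\rangle^{C\varepsilon_1^2}$; crucially the exponent is $C\varepsilon_1^2$, proportional to the potential strength $\varepsilon_1^2$ and \emph{independent} of the unrestricted size $\varepsilon_2$. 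Inserting the factorization gives $i\partial_t\varphi_2=\tfrac{\lambda_6}{t}\bigl(2|\varphi_1|^2\varphi_2+\varphi_1^2\overline{\varphi_2}\bigr)+\mathcal R_2$, where $\mathcal R_2$ is integrable against $\langle t\rangle^{1/2-C\varepsilon_1^2}\,dt$ even though the cubic terms are \emph{supercritical} at the decay rate $\langle t\rangle^{-1/2+C\varepsilon_1^2}$: this works only because there is no self-interaction of the slowly decaying unknown. A $\sup_\xi$-estimate on $\varphi_2$ then produces $\|u_2(t)\|_{L^\infty}\lesssim\varepsilon_2\langle t\rangle^{-1/2+C\varepsilon_1^2}$.

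Then I would solve the limiting ODE for $\varphi_2$. Replacing $\varphi_1$ by $W_1e^{-3i\lambda_1|W_1|^2\log t}$ and substituting $\varphi_2=e^{-3i\lambda_1|W_1|^2\log t}B_2$ removes the gauge phase and, in the variable $s=\log t$ (recall $\eta=3\lambda_1-2\lambda_6$), yields the constant-coefficient system
\begin{equation*}
\frac{d}{ds}\begin{pmatrix}B_2\\ \overline{B_2}\end{pmatrix}=\begin{pmatrix}i\eta|W_1|^2 & -i\lambda_6 W_1^2\\ i\lambda_6\overline{W_1^2} & -i\eta|W_1|^2\end{pmatrix}\begin{pmatrix}B_2\\ \overline{B_2}\end{pmatrix}.
\end{equation*}
This matrix has zero trace and determinant $(\eta^2-\lambda_6^2)|W_1|^4=-\mu^2|W_1|^4$, so by \eqref{E:cond} its eigenvalues are the real numbers $\pm\mu|W_1|^2$; diagonalizing and imposing $B_2|_{s=0}=W_2$ reproduces exactly the formula for $\widetilde W_2$ (the case $W_1(\xi)=0$ being trivial). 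Thus the \emph{polynomial} factor $t^{\pm\mu|W_1|^2}$ — rather than a logarithmic one — is the ODE trace of the hyperbolic, as opposed to resonant, character of the linearized dynamics under \eqref{E:cond}.

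Finally I would close everything by a continuity/bootstrap argument on the quantity
\begin{equation*}
\sup_{1\le t\le T}\ \sum_{j=1,2}\Big(\langle t\rangle^{-C\varepsilon_1^2}\bigl(\|u_j(t)\|_{H^1}+\|Ju_j(t)\|_{L^2}\bigr)+\langle t\rangle^{\frac12-C\varepsilon_1^2}\|u_j(t)\|_{L^\infty}\Big),
\end{equation*}
feeding in the factorization and the ODE comparison, and by showing $\varphi_j(t)-A_j(t)\to0$ in $L^\infty$ with $A_j$ the above ODE solutions; this identifies $W_1,W_2\in L^\infty$ and, combined with the standard bound on $\mathcal F M(t)\mathcal F^{-1}-\mathrm{Id}$ (controlled by $\langle t\rangle^{-1/4}\|Ju_j(t)\|_{L^2}$ after interpolating $|M(t)-1|\lesssim\min(1,|x|^2/t)$) times the $\langle t\rangle^{-1/2}$ coming from $D(t)$, yields the $O(t^{-3/4+C\varepsilon_1^2})$ error in \eqref{asymp1}--\eqref{asymp2}; the negative time direction is entirely analogous. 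The main obstacle I anticipate is the $u_2$ a priori estimate: at the slower rate $\langle t\rangle^{-1/2+C\varepsilon_1^2}$ the cubic terms are no longer critical, so every gain must be extracted from the linearity in $u_2$ and from the critical (not slower) decay of $u_1$, and one has to track that each loss exponent stays proportional to $\varepsilon_1^2$ and uniform in $\varepsilon_2$, while also handling the non-uniformity of $\widetilde W_2$ near $\{W_1(\xi)=0\}$ caused by the factor $W_1^2/|W_1|^2$.
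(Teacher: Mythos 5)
Your proposal follows essentially the same route as the paper: $u_1$ is dispatched by the known scalar cubic theory, the $u_2$-equation is exploited as a linear equation whose coefficients decay at the critical rate $O(\varepsilon_1^2 t^{-1})$, the profile comes from gauging out the phase and diagonalizing the $(\varphi_2,\overline{\varphi_2})$ system with real eigenvalues $\pm\mu|W_1|^2$ (hence the factors $t^{\pm\mu|W_1|^2}$), and everything is closed by a coupled bootstrap in which the $L^\infty$-decay of $u_2$ and the $H^1$/$J$ growth feed each other with all loss exponents proportional to $\varepsilon_1^2$ and uniform in $\varepsilon_2$, exactly as in the paper. The only substantive step the paper spells out that you leave implicit is the identification of the scattering data for the second component as a genuine conjugate pair, i.e.\ that the limit of the mode amplitudes $Q(t)^{-1}P^{-1}(w,\overline{w})^T$ equals $P^{-1}(W_2,\overline{W_2})^T$ for a single function $W_2$ (the paper's argument with the closed subspace $Z=\{(z,\overline{z})^T:\ z\in\mathbb{C}\}$, using the quantitative convergence rate to beat the growing mode), together with the separate pointwise treatment of the set $\{W_1=0\}$; both are needed to put $F_2$ in the stated form but are routine once noticed.
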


\begin{rem}
If $W_1$ and $W_2$ are continuous then,
for each fixed $t > 0$, $\tilde{W}_2(t,\xi)$ is continuous in $\xi \in \re$
including the zero points of $W_1$. Indeed, one sees this from
$\frac{W_1^2}{|W_1|^2}(t^{\mu |W_1|^2} - t^{-\mu |W_1|^2}) = W_1^2(2\mu \log t + o(1))\to 0$
as $|W_1| \to 0$.
\end{rem}

\begin{rem}\label{R:ODE}
The pair $(F_1,F_2)$ given by \eqref{F11} and \eqref{F12}
is a unique solution to the following initial value problem of the system of ordinary differential equations:
\begin{equation}\label{ODEsys}
\left\{ 
\begin{aligned}
   & iF_1'  
       = 3 \lambda_1t^{-1}  |F_1|^2 F_1, \quad && t >0 \\
   & iF_2' 
       = \lambda_6 t^{-1} (2|F_1|^2 F_2 + F^2_1 \overline{F_2}), \quad && t >0,\\
    &F_1(1,\xi) = W_1 (\xi), \quad F_2 (1, \xi) = W_{2}(\xi), && \xi \in \re.
\end{aligned}
\right. 
\end{equation}
Notice that the system is the corresponding version of \eqref{ODEgen}.
\end{rem}

Since the function $F_2$ contains the factor $t^{\mu |W_1|^2}$,
the estimate \eqref{asymp2} suggests that the second component $u_2$ decays in time slower than a linear solution does.
However, in order to obtain a decay estimate from below, which makes the slow decay rigorous,
we need to show that there exists a set $\mathcal{O}\subset \re$ with positive measure such that
$W_1(\xi)\neq0$ and
\[
	(\mu + i\eta) W_2 (\xi) 
   - i\lambda_6 \tfrac{W_1(\xi)^2}{|W_1(\xi)|^2} \overline{W_2 (\xi)}
   \neq 0
\]
hold on $\mathcal{O}$.
One may expect that a generic solution possesses this property and
it would be actually possible to find a sufficient condition in terms of the initial data $(u_{1,0},u_{2,0})$ which ensures the existence of the above set $\mathcal{O}$.

We here take another way to prove the existence of a slowly-decaying solution.
Namely, 
we consider the final state problem:
%of the following system of cubic nonlinear Schr\"{o}dinger 
%(NLS) equations in one space dimension :
\begin{equation}\label{NLS110}
\left\{ 
\begin{aligned}
   & {{\mathcal L}}u_1 
       = 3 \lambda_1 |u_1|^2 u_1, \quad && t \in \re, x \in \re,\\
   & {{\mathcal L}}u_2 
       = \lambda_6 (2|u_1|^2 u_2 + u^2_1 \overline{u}_2), \quad && t \in \re, x \in \re,\\
    &\| u_j(t)-u_{\mathrm{ap},j}(t)\|_{L^\infty} = o(\|u_{\mathrm{ap},j}(t)\|_{L^\infty})\quad \text{as } t\to \infty,  && j=1,2,
\end{aligned}
\right. 
\end{equation}
where $u_j : \re \times \re \to \mathbb{C}$ ($j=1,2$) are unknown functions, 
and 
$\lambda_{1}$ and $\lambda_{6}$ are real constants 
satisfying \eqref{E:cond}.
%$(\lambda_6 - \lambda_1)(\lambda_6 - 3\lambda_1)<0$. 
Furthermore, $u_{\mathrm{ap},j}$ are given functions defined by
\begin{equation}\label{E:uapj}
u_{\mathrm{ap},j}
=M(t)D(t)F_{j}(t,\cdot)=t^{-\frac12}F_{j}(t,\tfrac{x}{t})e^{\frac{ix^{2}}{2t}-i\frac{\pi}{4}},
\qquad j=1,2,
\end{equation}
where $F_1$ and $F_2$ are defined as in \eqref{F11} and \eqref{F12}, respectively, from 
a pair of prescribed $\mathbb{C}$-valued functions $W_1$ and $W_2$.
%\begin{equation}
%F_{1}(t,\xi)=
%W_1(\xi)e^{-3i\lambda_{1}|W_1(\xi)|^{2}\log t}
%\label{F11}
%\end{equation}
%and
%\begin{equation}\label{F12}
%F_{2}(t,\xi)
%= \tilde{W}_2(t,\xi)e^{-3i\lambda_{1}|W_1(\xi)|^{2}\log t}
%\end{equation}
%with given final datum $W_1$ and $W_2$, where 
%$\tilde{W}_2$ is defined as in Theorem \ref{1.1}.
%$\mu = \sqrt{-3(\lambda_6 - \lambda_1)(\lambda_6 - 3\lambda_1)}$.
%For the final state problem (\ref{NLS11}) we have the following result. 

Our second result is as follows:
\begin{theo}\label{1.2}
%Suppose $(\lambda_6 - \lambda_1)(\lambda_6 - 3\lambda_1) < 0$. 
Let $W_1, W_2 \in H^2$.
Fix $\nu \in (1/2,1)$ and $\delta \in (0,\nu-1/2)$.
Then, if
$\|W_1\|_{H^2}$ is sufficiently small %Let $0<\gamma_1 < \gamma_2 < 1/100$. 
then there exist $T=T(\nu,\delta, \|W_2\|_{H^2}) \ge 2$ and
 a solution
$(u_1, u_2 )\in C([T, \infty); L^2(\R))^2$ to \eqref{NLS110} such that 
$(Ju_1, Ju_2 )\in C([T, \infty); L^2(\R))^2$ and
\begin{align}\label{E:T2_error2}
\|u_{1}(t)-u_{\mathrm{ap},1}(t)\|_{L_{x}^{2}}
\lesssim t^{-\nu},\quad
\|u_{2}(t)-u_{\mathrm{ap},2}(t)\|_{L_{x}^{2}}
\lesssim t^{-\nu+\delta}
\end{align}
in $L^{2}(\R)$ and
\begin{align}\label{E:T2_error}
\|u_{1}(t)-u_{\mathrm{ap},1}(t)\|_{L_{x}^{\infty}}
\lesssim t^{-\nu-\frac14},\quad
\|u_{2}(t)-u_{\mathrm{ap},2}(t)\|_{L_{x}^{\infty}}
\lesssim t^{-\nu-\frac14 +\delta}
\end{align}
in $L^{\infty}(\R)$ for $t \ge T$.
In particular, if $(W_1,W_2)$ is chosen so that
$W_1(\xi) \neq 0$ and
\begin{equation}\label{E:nonvanishing}
	(\mu + i\eta) W_2 (\xi) 
   - i\lambda_6 \tfrac{W_1(\xi)^2}{|W_1(\xi)|^2} \overline{W_2 (\xi)}
   \neq 0
\end{equation}
hold on $\re$ in addition then
\[
	\| u_2(t) \|_{L^\infty} \sim
	\|u_{\mathrm{ap},2}(t)\|_{L_{x}^{\infty}} \sim t^{-\frac12 + \mu \|W_1\|_{L^\infty}}.
\]
On the other hand, if $(W_1,W_2)$ is chosen so that
\begin{equation}\label{E:vanishing}
	(\mu + i\eta) W_2 (\xi) 
   - i\lambda_6 \tfrac{W_1(\xi)^2}{|W_1(\xi)|^2} \overline{W_2 (\xi)}
   = 0
\end{equation}
holds on $\re$ then \eqref{E:T2_error} implies
\[
	 u_2(t) = M(t) D(t) (t^{-\mu |W_1|^2} W_2 e^{- i3\lambda_1 |W_1|^2 \log t })
                   + O(t^{-\nu}) 
\]
as $t\to\infty$.
\end{theo}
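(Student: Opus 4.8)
The plan is to solve the final state problem \eqref{NLS110} by a contraction/fixed point argument on the integral equation obtained after subtracting the approximate solution $u_{\mathrm{ap},j}$, exploiting the triangular (quasi-linear-in-$u_2$) structure of the system. First I would observe that $u_{\mathrm{ap},1}$ is an \emph{exact} approximate solution of the first equation to good accuracy: since $F_1$ solves the ODE in \eqref{ODEsys}, the factorization $U(-t) = M(t)^{-1}\mathcal{F}^{-1}D(t)^{-1}M(t)^{-1}$ together with the standard stationary phase expansion $\mathcal{F}M(t)^{-1}g \approx D(t)^{-1}g$ (with error $O(t^{-3/4})$ in a suitable sense, the source of the $-1/4$ gain in \eqref{E:T2_error}) shows that $\mathcal{L}u_{\mathrm{ap},1} - 3\lambda_1|u_{\mathrm{ap},1}|^2 u_{\mathrm{ap},1}$ is small. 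One then constructs $u_1$ solving the first (scalar, subcritical-small) equation with the prescribed asymptotics by a now-routine argument (this is essentially Ozawa's theorem, used in the backward direction), obtaining \eqref{E:T2_error2} and \eqref{E:T2_error} for $j=1$ and, crucially, the a priori bound $\|u_1(t)\|_{L^\infty}\lesssim t^{-1/2}\|W_1\|_{L^\infty}$ for $t\ge T$ with $\|W_1\|_{H^2}$ small.

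Next, with $u_1$ fixed, the second equation $\mathcal{L}u_2 = \lambda_6(2|u_1|^2 u_2 + u_1^2\overline{u_2})$ is \emph{linear} in $(u_2,\overline{u_2})$ with a time-dependent potential of size $O(t^{-1})$ (since $|u_1|^2 \sim t^{-1}|W_1|^2$). Writing $v = u_2 - u_{\mathrm{ap},2}$, I would set up the Duhamel formula
\[
v(t) = U(t-T)v(T) - i\int_T^t U(t-s)\, R(s)\, ds,
\]
where $R$ collects the remainder $\mathcal{L}u_{\mathrm{ap},2} - \lambda_6(2|u_{\mathrm{ap},1}|^2 u_{\mathrm{ap},2} + u_{\mathrm{ap},1}^2\overline{u_{\mathrm{ap},2}})$ (again $O(t^{-3/4}\cdot t^{-1/2+\mu\|W_1\|_{L^\infty}})$-type, using that $F_2$ solves the $F_2$-equation in \eqref{ODEsys}) plus the cross terms coming from replacing $u_1$ by $u_{\mathrm{ap},1}$. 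The key quantity to control is $\|v(t)\|_{L^2}$ and $\|Jv(t)\|_{L^2}$, and I would run a Gronwall-type estimate: the potential term contributes $\int_T^\infty s^{-1}(\cdots)\,ds$, whose logarithmic divergence is exactly absorbed by the $\delta$-loss in the exponent $-\nu+\delta$ and by choosing $\|W_1\|_{H^2}$ small so that $C\|W_1\|_{L^\infty}^2 < \delta$. The $L^\infty$ bounds \eqref{E:T2_error} then follow from the $L^2$ bounds on $v$ and $Jv$ via the standard interpolation inequality $\|f\|_{L^\infty}\lesssim t^{-1/2}\|f\|_{L^2}^{1/2}\|Jf\|_{L^2}^{1/2} + (\text{lower order})$.

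Once the solution with the stated error bounds is in hand, the remaining assertions are elementary. Under the nonvanishing condition \eqref{E:nonvanishing}, the dominant term of $\widetilde{W}_2(t,\xi)$ is the one carrying $t^{\mu|W_1(\xi)|^2}$, so $\|u_{\mathrm{ap},2}(t)\|_{L^\infty} = t^{-1/2}\|\widetilde{W}_2(t,\cdot)\|_{L^\infty} \sim t^{-1/2+\mu\|W_1\|_{L^\infty}}$ (using continuity of $W_1,W_2 \in H^2 \hookrightarrow C$ and that the sup of $|W_1|$ is attained or approached); since $\nu-1/4 > 1/4 > -1/2 + \mu\|W_1\|_{L^\infty}$ for $\|W_1\|$ small, the error in \eqref{E:T2_error} is negligible against $u_{\mathrm{ap},2}$, giving $\|u_2(t)\|_{L^\infty}\sim t^{-1/2+\mu\|W_1\|_{L^\infty}}$. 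Under the vanishing condition \eqref{E:vanishing}, the $t^{\mu|W_1|^2}$-term in $\widetilde{W}_2$ drops out and $\widetilde{W}_2(t,\xi) = t^{-\mu|W_1(\xi)|^2}W_2(\xi)$ exactly (one checks the coefficient of the surviving term is $\frac{1}{2\mu}(\mu-i\eta)W_2 + \frac{i\lambda_6}{2\mu}\frac{W_1^2}{|W_1|^2}\overline{W_2}$, which equals $W_2$ precisely when \eqref{E:vanishing} holds, by adding and subtracting), whence $u_{\mathrm{ap},2} = M(t)D(t)(t^{-\mu|W_1|^2}W_2 e^{-3i\lambda_1|W_1|^2\log t})$ and \eqref{E:T2_error} gives the claimed identity with $O(t^{-\nu})$ remainder.

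The main obstacle is the closed estimate for $v=u_2-u_{\mathrm{ap},2}$: because $u_2$ itself decays more slowly than $t^{-1/2}$, the nonlinearity (linear in $u_2$ but with an $O(t^{-1})$ potential) is effectively \emph{supercritical-looking}, and one must be careful that the Gronwall exponent generated by the potential is strictly smaller than the allowed loss $\delta$ — this is what forces the smallness of $\|W_1\|_{H^2}$ and the restriction $\delta<\nu-1/2$, and it is the place where the special triangular structure of \eqref{NLS} (no feedback from $u_2$ into $u_1$) is indispensable. A secondary technical point is ensuring the operator $J$ propagates correctly through the quadratic potential terms; this uses the commutation $[J,\mathcal{L}]=0$ and the identity $J(t) = M(t)(it\partial_x)M(t)^{-1}$ to distribute $J$-derivatives, at the cost of needing $W_1,W_2\in H^2$ so that $\partial_\xi$-derivatives of $F_j$ remain controlled.
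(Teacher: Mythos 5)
Your proposal is correct in substance and shares the paper's core mechanism: write $u_j=u_{\mathrm{ap},j}+v_j$, use that $(F_1,F_2)$ solves \eqref{ODEsys} so that the error $\mathcal{E}_j$ of \eqref{E:Ej} and the correction $\mathcal{R}_j$ of \eqref{E:Rj} gain a factor $t^{-3/4}$-type via the $\mathcal{F}M\mathcal{F}^{-1}-1$ estimate, close weighted bounds on $\|v_j\|_{L^2}$ and $\|Jv_j\|_{L^2}$ using the $\delta$-loss and the smallness of $\|W_1\|_{H^2}$, and interpolate to get \eqref{E:T2_error}; your handling of the two ``in particular'' claims (evaluation near a maximizer of $|W_1|$, and the algebra showing $\widetilde{W}_2=t^{-\mu|W_1|^2}W_2$ under \eqref{E:vanishing}) is exactly what is needed. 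Where you genuinely differ is the organization: the paper runs one contraction for the pair $(v_1,v_2)$ on the product space $\mathbf{X}_T$, while you decouple, first constructing $u_1$ by quoting the scalar modified-scattering theory (Ozawa, Hayashi--Naumkin) and then treating the $u_2$-equation as a linear equation with an $O(t^{-1})$ potential. The decoupling is legitimate because of the triangular structure and makes the linearity in $u_2$ (hence the absence of smallness on $W_2$) transparent; the paper's joint contraction is self-contained and directly produces the rates $t^{-\nu}$, $t^{-\nu-1/4}$ for $u_1$ with $W_1\in H^2$, rather than having to verify that the quoted scalar results give precisely these rates. Two points in your sketch need repair when written out: (i) your Duhamel formula for $v=u_2-u_{\mathrm{ap},2}$ is posed forward from $T$ with the unknown datum $v(T)$; for the final state problem the integral must run from $t$ to $\infty$ as in \eqref{INT}, and the estimate should be closed by a fixed point in the weighted norm rather than a Gronwall argument launched from an unknown endpoint; (ii) the $H^2_\xi$-control of $F_2$, which underlies the bounds on $\mathcal{R}_2$, $\mathcal{E}_2$ and $Ju_{\mathrm{ap},2}$, is not immediate because of the factor $W_1^2/|W_1|^2$ in $\widetilde{W}_2$; the paper checks that $\tfrac{W_1^2}{|W_1|^2}\bigl(t^{\mu|W_1|^2}-t^{-\mu|W_1|^2}\bigr)$ and its first two derivatives remain controlled near the zero set of $W_1$, and your argument should include this verification.
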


\begin{rem}
A simple way to choose a pair of functions $(W_1,W_2)$ which satisfies the condition \eqref{E:nonvanishing} 
is to take a pair of non-zero real-valued functions.
For instance,  the choice $(W_1,W_2) = (\varepsilon e^{-x^2}, e^{-x^2})$ works for small $\varepsilon>0$.
%We also remark that if $\| W_1 \|_{L^\infty}$ is small compared with $\nu - 1/2$ then 
%\eqref{E:T2_error} gives us
%$\| u_j(t)-u_{\mathrm{ap},j}(t)\|_{L^\infty} = o(\|u_{\mathrm{ap},j}(t)\|_{L^\infty})$ for $j=1,2$.
\end{rem}

\begin{rem}
The upper bound of $\nu$ can be compared with the asymptotic analysis for the linear equation.
One has
\[
	\|U(t) u_0 - t^{-\frac12} e^{i\frac{x^2}{2t}-i \frac{\pi}4}  \mathcal{F} [u(0)](\tfrac{x}t)\|_{L^p} \lesssim  \|u_0\|_{H^{0,2}}  t^{-\frac54 + \frac1{2p}}
\]
as $t\to\infty$ for any $p\in[2,\infty]$. The order in the right hand side is optimal.
The estimates \eqref{E:T2_error2} and \eqref{E:T2_error} almost reach to this order.
They contain merely an epsilon loss.
\end{rem}

The rest of the paper is organized as follows.
We treat the Cauchy problem \eqref{NLS} in Section 2.
We mainly prove the global existence and the asymptotic behavior of the 
second component $ u_2 $ to show Theorem \ref{1.1}. 
In section 3, we treat the final state problem and give the proof of Theorem \ref{1.2}.

\section{Proof of Theorem \ref{1.1}}
%%%%%%%%%%%%%%%%%%%%%%%%%%%%%%%
%%%%%%%%%%%%%%%%%%%%%%%%%%%%%%%

In this section, we prove Theorem \ref{1.1}.

\subsection{On the first component $u_1$}
Note that $u_1$ satisfies the single equation
\begin{equation}
%i \partial_t u_1 + \frac{1}{2}\partial^2_x u_1 
     \mathcal{L}u_{1}  = 3 \lambda_1 |u_1|^2 u_1. \label{SS}
\end{equation}
As mentioned in the introduction, the asymptotic behavior is well-known.

Following the argument in Hayashi-Naumkin \cite{HN}, one shows that if $\varepsilon_1>0$ is sufficiently small then
(\ref{SS}) admits a unique time-global solution $u_1(t)$
such that 
\begin{align}
&\|u_{1}(t)\|_{H^1} + \|Ju_{1}(t)\|_{L^2} \le C \varepsilon_1 \langle t \rangle^{C_1 \varepsilon^2_1}, 
\label{est.u1.1}\\
&\|u_{1}(t)\|_{L^{\infty}} \le C \varepsilon_1 \langle t \rangle^{-1/2}.  
\label{est.u1.2}
\end{align}
Furthermore if we let $v_{1}:={{\mathcal F}}U(-t)u_{1}(t)$ %($v_{1}$の定義を変更しました). 
%$v_1 = v_1 (t, x) := U(-t)u_1 (t)$. 
then there exists some 
$\alpha = \alpha(\xi) \in L^{\infty} \cap L^2$ such that 
\begin{align}
&\|v_1 (t)\|_{L^{\infty}} + \|\alpha\|_{L^{\infty}}
\le C_2 \varepsilon_1, \label{est.u1.2.5} \\
&\| e^{3i\lambda_1 \Phi_1} v_1(t) - \alpha \|_{L^{\infty}} 
\le C \varepsilon^3_1 t^{-1/4 + 2C_1\varepsilon^2_1} \label{est.u1.3}
\end{align}
for $t \in [1, \infty)$, where $\Phi_1 = \Phi_1 (t, \xi): 
= \int^t_1 \tau^{-1} |v_1 (\tau)|^2 \ d\tau$. 

Let us deduce (\ref{asymp1}). To this end, we introduce one lemma.
\begin{lem}\label{lem:2.1}
One has $\mathcal{F} M \mathcal{F}^{-1} = U(-1/t)$. Further, for any $f \in {H}^1$,
\begin{equation}\label{E:U-1}
	\| (\mathcal{F} M \mathcal{F}^{-1} - 1)f\|_{L^\infty}
	\lesssim |t|^{-\frac14} \| f \|_{\dot{H}^1}
\end{equation}
for $t\neq0$.
\end{lem}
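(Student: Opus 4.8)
The first identity is a direct computation with the decomposition $U(t)=M(t)D(t)\mathcal{F}M(t)$. Writing $t'=-1/t$, one checks that the operator $\mathcal{F}M(t)\mathcal{F}^{-1}$ acts on the Fourier side exactly as multiplication by $e^{\frac{i}{2t}|\xi|^2}$ composed with the kernel of the free evolution $U(t')$ with $t'=-1/t$; more directly, since $U(t')=M(t')D(t')\mathcal{F}M(t')$ and $D(t')\mathcal{F}M(t')D(t')^{-1}$ can be unwound, a short calculation with Gaussian-type kernels yields $\mathcal{F}M(t)\mathcal{F}^{-1}=U(-1/t)$. I would simply verify this by computing the Schwartz kernel of $\mathcal{F}M(t)\mathcal{F}^{-1}$: it is $(2\pi)^{-1}\int e^{-ix\xi}e^{\frac{i}{2t}x^2}e^{iy x}\,dx$, a Fresnel integral which evaluates (up to the standard constant and phase) to $|t|^{1/2}e^{-i\frac{t}{2}(\xi-y)^2}$ times a unimodular constant, and this is precisely the kernel of $U(-1/t)$.

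For the quantitative estimate \eqref{E:U-1}, the plan is to use the identity just proved together with the standard smoothing/decay estimate for the free Schr\"odinger group. Since $\mathcal{F}M\mathcal{F}^{-1}-1=U(-1/t)-U(0)$, I would write
\[
	(U(-1/t)-1)f=\int_0^{-1/t} \tfrac{d}{ds}U(s)f\,ds=\tfrac{i}{2}\int_0^{-1/t} U(s)\pt_x^2 f\,ds,
\]
and then estimate in $L^\infty$ using the dispersive bound $\|U(s)g\|_{L^\infty}\lesssim |s|^{-1/2}\|g\|_{L^1}$ — but this would cost an $L^1$ norm of $\pt_x^2 f$, which is too strong. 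Instead the cleaner route is to bound $\|U(-1/t)g-g\|_{L^\infty}$ directly: by Sobolev embedding $\|h\|_{L^\infty}\lesssim \|h\|_{L^2}^{1/2}\|\pt_x h\|_{L^2}^{1/2}$ applied to $h=(U(-1/t)-1)f$, combined with the elementary Fourier-side estimate $|e^{-i\frac{\theta}{2}\xi^2}-1|\lesssim \min(1,|\theta|\xi^2)\lesssim |\theta|^{1/2}|\xi|$ with $\theta=-1/t$, which gives $\|(U(-1/t)-1)f\|_{L^2}\lesssim |t|^{-1/2}\|f\|_{\dot H^1}$ directly on the Fourier side; one also has trivially $\|\pt_x(U(-1/t)-1)f\|_{L^2}\lesssim \|f\|_{\dot H^1}$. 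Interpolating via Sobolev gives $\|(U(-1/t)-1)f\|_{L^\infty}\lesssim |t|^{-1/4}\|f\|_{\dot H^1}$, as claimed.

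The main (and really the only) subtlety here is bookkeeping of the constants and phases in the kernel computation for the first identity, and making sure the Fourier-multiplier estimate $|e^{-i\theta\xi^2/2}-1|\lesssim|\theta|^{1/2}|\xi|$ is applied so that the resulting bound is genuinely homogeneous of degree $1$ in $f$ on the homogeneous Sobolev scale $\dot H^1$ rather than $H^1$. Neither step presents a real obstacle; the lemma is an elementary but convenient repackaging of the dispersive decay of the free group, tailored to extract the factor $t^{-1/4}$ that controls the error terms in \eqref{asymp1}–\eqref{asymp2}.
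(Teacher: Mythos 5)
Your proof is correct and follows essentially the same route as the paper: Gagliardo--Nirenberg interpolation between an $L^2$ bound with the $|t|^{-1/2}$ gain and the trivial $\dot H^1$ bound from unitarity. The only cosmetic difference is that you obtain the $L^2$ bound on the Fourier side via the multiplier $|e^{i\xi^2/(2t)}-1|\lesssim |t|^{-1/2}|\xi|$ of $U(-1/t)$, whereas the paper uses $|M-1|\le (|x|^2/4|t|)^{1/2}$ on the physical side -- by Plancherel these are the same estimate.
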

This is well-known. For reader's convenience, we give a proof.
\begin{proof}
The identity is obvious. Let us prove \eqref{E:U-1}. By Gagliardo-Nirenberg inequality,
\[
	\| (\mathcal{F} M \mathcal{F}^{-1} - 1)f\|_{L^\infty}
	\lesssim \| (\mathcal{F} M \mathcal{F}^{-1} - 1)f\|_{L^2}^\frac12
	\|  (\mathcal{F} M \mathcal{F}^{-1} - 1)f\|_{\dot{H}^1}^\frac12
\]
Since $U(-1/t)$ is unitary on $\dot{H}^1$, one has
$\|  (\mathcal{F} M \mathcal{F}^{-1} - 1)f\|_{\dot{H}^1} \le 2\|f\|_{\dot{H}^1}$.
Further, by using $|M-1|=|\sin (|x|^2/4t)| \le (|x|^2/4|t|)^{1/2}$, one has
\[
	\| (\mathcal{F} M \mathcal{F}^{-1} - 1)f\|_{L^2}
	\lesssim |t|^{-\frac12} \| f \|_{\dot{H}^1}.
\]
Hence, we obtain the result.
\end{proof}

By Lemma \ref{lem:2.1}, \eqref{est.u1.1}, 
(\ref{est.u1.3}), and the fact that $\|v_1\|_{\dot{H}^1} = \|Ju_1\|_{L^2}$,
we see that 
\begin{align}
u_1(t) 
%& = U(t)\mathcal{F}^{-1} v_1 (t) \nonumber \\
& = M(t)D(t) \mathcal{F} M(t) \mathcal{F}^{-1}v_1(t) \nonumber \\
& = M(t)D(t) v_1 (t) + O(t^{-3/4 + C_1 \varepsilon^2_1}) \nonumber \\
& = M(t) D(t) e^{-3i\lambda_1 \Phi_1 (t)} \alpha + O(t^{-3/4 + 2C_1 \varepsilon^2_1}) 
\label{asymp.u1}
\end{align}
in $L^{\infty}$ as $t \to \infty$. As for the phase correction, we write 
\begin{align*}
\Phi_1 (t) 
&= \int^t_1 \tau^{-1} |v_1(\tau)|^2 \ d\tau  \\
&= |\alpha|^2 \log t 
+ \int^t_1 \tau^{-1} (|e^{3i\lambda_1 \Phi_1}v_1(\tau)|^2 - |\alpha|^2) \ d\tau.
\end{align*}
By virtue  of (\ref{est.u1.2.5}) and (\ref{est.u1.3}), the above integral converges to some 
$\theta_1 = \theta_1(\xi)$ in $L^{\infty}$ as $t \to \infty$, and hence we have 
\begin{align*}
\Phi_1 (t) 
&= |\alpha|^2 \log t  + \theta_1 + O(t^{-1/4 + C_1 \varepsilon^2_1})
\end{align*}
in $L^{\infty}$ as $t \to \infty$. Plugging it into (\ref{asymp.u1}) and 
setting $W_1 := e^{-3i \lambda_1 \theta_1} \alpha$, we have 
\begin{align*}
u_1(t) 
&= M(t) D(t) e^{-3i\lambda_1 |\alpha|^2 \log t} W_1 + O(t^{-3/4 + C_1 \varepsilon^2_1}) \\
&= M(t) D(t) e^{-3i\lambda_1 |W_1|^2 \log t} W_1 + O(t^{-3/4 + C_1 \varepsilon^2_1}),
\end{align*}
which is  (\ref{asymp1}). 
Further, we see from \eqref{est.u1.3} that
\begin{equation}\label{est.u1.4}
	\| e^{3i\lambda_1 (\Phi_1- \theta_1)} v_1(t) - W_1 \|_{L^{\infty}} 
\le C \varepsilon_1 t^{-1/4 + 2C_1\varepsilon^2_1}.
\end{equation}

\subsection{Global bound of the solution}
Let us move on to the analysis of the second component $u_2$.
Due to a standard local theory, one obtains a time-local solution
\[
	u_2 \in C(I; H^1 \cap H^{0,1} ), \quad 0 \in \exists I \subset \R.
\] 
See \cite{Cazenave}, for instance.
Notice that the second equation of the system \eqref{NLS} is a linear equation in $u_2$.
Hence, it is obvious that if $u_1$ exists globally in time then so does $u_2$. 

The main issue in this subsection is to obtain the bounds (\ref{upper_bound}) on $u_2$.
%Invoking (\ref{est.u1.1})-(\ref{est.u1.3}), we proceed in the estimates of $u_2$. 
Recall that $\varepsilon_2:=\|u_{2,0}\|_{H^2} +\|u_{2,0}\|_{H^{0,1}}$.
% and let 
%$T^{\ast}>0$ be the life span of the solution $u_2$. Precisely speaking, it is defined as 
For $\delta \in (0,1/5)$, we introduce
\begin{align}
T^{\ast} 
:= \sup \left\{ T > 0 : \sup_{0\le t <T} \langle t \rangle^{-\delta}
     (\|u_2 (t)\|_{H^1} + \|Ju_2 (t)\|_{L^2}) < 4 \varepsilon_2 \right\}.
\label{lifespan}
\end{align}
The main step of the proof is to show $T^\ast = \infty$.

Before proving this, let us collect basic facts on $u_2$.
Note that $u_2$ solves
\begin{align}\label{E:u2int}
u_2(t) = U(t-t_0)u_2(t_0) - i \lambda_6 \int^t_{t_0} U(t-\tau) 
\mathcal{N}_2(u_1 (\tau), u_2(\tau)) \ d\tau
\end{align}
for any fixed $t_0 \in \R$,
where 
\[
	\mathcal{N}_2(u_1, u_2) = 2|u_1|^2 u_2 + u^2_1 \overline{u}_2.
\]
One has
\begin{align}\label{E:u2intJ}
 J u_2(t) = U(t-t_0)Ju_2(t_0) - i \lambda_6 \int^t_{t_0} U(t-\tau) J
\mathcal{N}_2(u_1, u_2)(\tau) \ d\tau.
\end{align}
The following identity is useful:
\begin{equation}\label{E:JN2}
\begin{aligned}
J\mathcal{N}_2(u_1, u_2) 
& = 2|u_1|^2 Ju_2 - u^2_1 \overline{Ju_2} \\
& \quad +2 (\overline{u}_1 u_2 Ju_1 - u_1 u_2 \overline{Ju_1}) 
   + 2 u_1 \overline{u}_2 Ju_1.
\end{aligned}
\end{equation}

Let us begin with the following lemma.
\begin{lem}
For any $T_0>1$, there exists $\varepsilon_1^\ast>0$ such that if $\varepsilon_1 \in (0, \varepsilon_1^\ast)$ then
\[
	\sup_{t\in [0,T_0]}(\|u_2 (t)\|_{H^1} + \|Ju_2(t)\|_{L^2} ) \le 2 \varepsilon_2.
\]
In particular,
$T^{\ast}>T_0$ for such $\varepsilon_1$, where $T^\ast$ is given in \eqref{lifespan}.
\end{lem}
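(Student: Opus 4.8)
The plan is to run a Gronwall-type estimate on the \emph{bounded} interval $[0,T_0]$, using only that the $u_2$-equation is linear in $u_2$ and that the coefficients, which are built from $u_1$, are already controlled by \eqref{est.u1.1}--\eqref{est.u1.2}. Set $X(t):=\|u_2(t)\|_{H^1}+\|Ju_2(t)\|_{L^2}$. Since $u_1$ is global, so is $u_2$ (the equation for $u_2$ is linear in $u_2$), hence $u_2\in C(\re;H^1\cap H^{0,1})$ and $X$ is continuous on $[0,T_0]$; moreover, because $J(0)=x$, we have $X(0)=\|u_{2,0}\|_{H^1}+\|x u_{2,0}\|_{L^2}\le \varepsilon_2$. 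The strategy is to show $X(t)$ satisfies a linear integral inequality with an integrable-in-time, $O(\varepsilon_1^2)$ coefficient, and then close the estimate by taking $\varepsilon_1$ small depending on $T_0$.

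First I would bound $\|u_2(t)\|_{H^1}$ from \eqref{E:u2int} with $t_0=0$: since $U(\cdot)$ is unitary on $H^1$ and $\mathcal{N}_2$ is cubic with two $u_1$-factors and one $u_2$-factor, the algebra property of $H^1(\re)$ together with $H^1\hookrightarrow L^\infty$ gives
\[
\|\mathcal{N}_2(u_1,u_2)\|_{H^1}\lesssim \big(\|u_1\|_{L^\infty}^2+\|u_1\|_{L^\infty}\|u_1\|_{H^1}\big)\|u_2\|_{H^1}\lesssim \varepsilon_1^2\langle\tau\rangle^{-\frac12+C_1\varepsilon_1^2}\|u_2\|_{H^1},
\]
where the last step uses \eqref{est.u1.1}--\eqref{est.u1.2}. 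Next, for $\|Ju_2(t)\|_{L^2}$ I would use \eqref{E:u2intJ} and the identity \eqref{E:JN2}, which writes $J\mathcal{N}_2$ as a sum of terms each containing exactly one of $Ju_2,Ju_1$ together with two of $u_1,u_2,\overline{u_1},\overline{u_2}$; placing the low-order factors in $L^\infty$, using $\|u_2\|_{L^\infty}\lesssim\|u_2\|_{H^1}$, and invoking \eqref{est.u1.1}--\eqref{est.u1.2} yields
\[
\|J\mathcal{N}_2(u_1,u_2)\|_{L^2}\lesssim \varepsilon_1^2\langle\tau\rangle^{-1}\|Ju_2\|_{L^2}+\varepsilon_1^2\langle\tau\rangle^{-\frac12+C_1\varepsilon_1^2}\|u_2\|_{H^1}.
\]
Adding the two bounds and using that $U(\cdot)$ is unitary on $L^2$, I obtain, for $t\in[0,T_0]$,
\[
X(t)\le X(0)+C\varepsilon_1^2\int_0^t\langle\tau\rangle^{-\frac12+C_1\varepsilon_1^2}X(\tau)\,d\tau.
\]

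Since the integrand coefficient is bounded on $[0,T_0]$, one has $\int_0^{T_0}\langle\tau\rangle^{-\frac12+C_1\varepsilon_1^2}\,d\tau\le C(T_0)<\infty$, so Gronwall's inequality gives $X(t)\le X(0)\exp(C\varepsilon_1^2 C(T_0))\le \varepsilon_2\exp(C\varepsilon_1^2 C(T_0))$ on $[0,T_0]$. Choosing $\varepsilon_1^\ast=\varepsilon_1^\ast(T_0)>0$ so small that $\exp(C\varepsilon_1^2 C(T_0))\le 2$ whenever $\varepsilon_1\in(0,\varepsilon_1^\ast)$ yields $\sup_{t\in[0,T_0]}X(t)\le 2\varepsilon_2$, the asserted bound. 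Finally, because $\langle t\rangle^{-\delta}\le 1$, this gives $\langle t\rangle^{-\delta}X(t)\le 2\varepsilon_2<4\varepsilon_2$ on $[0,T_0]$, and by continuity of $t\mapsto\langle t\rangle^{-\delta}X(t)$ the strict inequality persists slightly beyond $T_0$, whence $T^\ast>T_0$. I do not expect a genuine obstacle here: the estimate is deliberately wasteful (the time-integrability in $\tau$ is not even needed on a bounded interval), since the delicate, $T_0$-independent bound for large $t$ is established separately; the only point to keep in mind is that all hidden constants, and hence $\varepsilon_1^\ast$, are allowed to depend on $T_0$.
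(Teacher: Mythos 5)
Your proposal is correct and follows essentially the same route as the paper: plug the bounds \eqref{est.u1.1}--\eqref{est.u1.2} into the Duhamel formulas \eqref{E:u2int}--\eqref{E:u2intJ} (with \eqref{E:JN2} for the $J$-term), run Gronwall on $[0,T_0]$, and choose $\varepsilon_1$ small depending on $T_0$; the paper merely estimates the coefficient by $(\|u_1\|_{H^1}+\|Ju_1\|_{L^2})^2\lesssim\varepsilon_1^2\langle\tau\rangle^{2C_1\varepsilon_1^2}$ instead of your time-decaying $L^\infty$-based coefficient, which is immaterial on a bounded interval.
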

\begin{proof}
Plugging (\ref{est.u1.1}) and (\ref{est.u1.2}) to 
\eqref{E:u2int} and \eqref{E:u2intJ} with $t_0=0$,
we have  
\begin{align*}
&\|u_2 (t)\|_{H^1} + \|Ju_2(t)\|_{L^2} \\
&\quad \le \varepsilon_2 + C\int^t_0 (\|u_1(\tau)\|_{H^1}+\|Ju_1(\tau)\|_{L^2})^2 
     (\|u_2 (\tau)\|_{H^1} + \|Ju_2(\tau)\|_{L^2} ) \ d\tau \\
&\quad \le \varepsilon_2 + C\varepsilon^2_1 \int^t_0 \langle \tau 
\rangle^{2C_1 \varepsilon^2_1} 
     (\|u_2 (\tau)\|_{H^1} + \|Ju_2(\tau)\|_{L^2} ) \ d\tau,
\end{align*}
where we have used \eqref{E:JN2} to estimate the nonlinearity.
Hence, Gronwall's inequality yields 
\begin{equation}\label{E:u2bound0to1}
	\begin{aligned}
\|u_2 (t)\|_{H^1} + \|Ju_2(t)\|_{L^2} 
 &\le (1+C\varepsilon^2_1 \langle T_0 \rangle^{1+2C_1 \varepsilon_1^2}e^{C\varepsilon^2_1 \langle T_0 \rangle^{1+2C_1 \varepsilon_1^2}}) \varepsilon_2\\
 &<2\varepsilon_2
\end{aligned}
\end{equation}
for $t \in [0, T_0]$ if $\varepsilon_1$ is sufficiently small. 
Together with the definition of $T^\ast$, one
obtains $T^\ast >T_0$ as desired.
%We will consider the estimates 
%of $u_2(t)$ for $t \in [0, T^{\ast})$ first, and, in next section, show that $T^{\ast}$ is 
%actually chosen as $\infty$. 
%We remark that we have used the identities
%to estimate $\| Ju_2(t) \|_{L^2}$.
\end{proof}

We introduce notation.
Let $v_2 (t) := {{\mathcal F}}U(-t) u_2$, or equivalently,
\begin{equation}\label{E:u2v2}
	u_2(t) = M(t) D(t) \mathcal{F} M(t) \mathcal{F}^{-1} v_2(t).
\end{equation}
We further introduce
\begin{align*} 
w(t,\xi) := e^{3i\lambda_1 (\Phi_1(t)-\theta_1)}v_2(t). 
\end{align*}
We have the following formula for $w$:
\begin{lem}
The following formula for $w(t)$ is valid:
\begin{align}
\begin{pmatrix}
w(t) \\
\overline{w(t)}
\end{pmatrix}
& = P Q(t) P^{-1}
\begin{pmatrix}
w(1) \\
\overline{w(1)}
\end{pmatrix}
\nonumber \\
& \quad + P Q(t)
\int^t_1 Q(\tau)^{-1}P^{-1} 
\begin{pmatrix}
S_2 + e^{3i\lambda_1 (\Phi_1-\theta_1)} R_2 \\
\overline{S_2 + e^{3i\lambda_1 (\Phi_1-\theta_1)} R_2 }
\end{pmatrix}(\tau)
\ d \tau \label{matrix3}
\end{align}
for $t \ge1$, where 
$e^{i\theta} = W_1/|W_1|$,
\begin{align*}
P ={}& \begin{pmatrix}
\lambda_6 e^{2i\theta} & \eta - i\mu \\
\eta - i\mu & \lambda_6 e^{-2i\theta}
\end{pmatrix}, \\
P^{-1} ={}& \frac1{2i\mu(\eta-i\mu)} \begin{pmatrix}
\lambda_6 e^{-2i\theta} &- (\eta - i\mu) \\
-(\eta - i\mu) & \lambda_6 e^{2i\theta}
\end{pmatrix},\\
Q(t) ={}& 
\begin{pmatrix}
t^{-\mu |W_1|^2} & 0 \\
0 & t^{\mu |W_1|^2} 
\end{pmatrix},
\end{align*}
for $\xi\in\R$ such that $W_1(\xi)\neq0$,
and
\begin{align}
R_2(t) 
%&= -i\lambda_6 t^{-1} \mathcal{F}(M(t)^{-1}-1) \mathcal{F}^{-1} 
%                   \mathcal{N}_2 (\mathcal{F}M{{\mathcal F}}^{-1}v_1, 
%                   \mathcal{F}M{{\mathcal F}}^{-1}v_2) (t) \nonumber \\
%& \quad -i \lambda_6 t^{-1} ( \mathcal{N}_2 (\mathcal{F}M{{\mathcal F}}^{-1}v_1, \mathcal{F}M{{\mathcal F}}^{-1}v_2)(t) - \mathcal{N}_2 (v_1,v_2)(t) ), \label{R2}
&= -i\lambda_6 t^{-1} \mathcal{F}M(t)^{-1} \mathcal{F}^{-1} 
                   \mathcal{N}_2 (\mathcal{F}M{{\mathcal F}}^{-1}v_1, 
                   \mathcal{F}M{{\mathcal F}}^{-1}v_2) (t) \nonumber \\
& \quad +i \lambda_6 t^{-1} \mathcal{N}_2 (v_1,v_2)(t) , \label{R2}
\end{align}
and
\begin{align}
S_2(t) 
& =  i\eta t^{-1} (|e^{3i\lambda_1 (\Phi_1(t)-\theta_1)}v_1(t)|^2 - |W_1|^2) w (t)
      \nonumber \\
& \qquad -i\lambda_6 t^{-1} \{(e^{3i\lambda_1 (\Phi_1(t)-\theta_1)} v_1(t))^2 - W_1^2\}
           \overline{w (t)}. \label{S2}
\end{align}
The identity is valid also on $\{W_1(\xi) = 0\}$ with the convention $PQ(t) P^{-1} = I_2$,
where $I_2$ is the identity matrix $\mathrm{diag}(1,1)$.
\end{lem}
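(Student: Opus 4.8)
The plan is to realize \eqref{matrix3} as the variation-of-parameters formula for a $2\times 2$ linear ODE system in $(w,\overline w)^{T}$, derived from the equation for $v_2$ by two successive ``add-and-subtract'' manipulations that produce precisely the quantities $R_2$ and $S_2$. First I would write down the equation for $v_2=\mathcal{F}U(-t)u_2$. Since $i\partial_t(U(-t)u_2)=U(-t)\mathcal{L}u_2=\lambda_6 U(-t)\mathcal{N}_2(u_1,u_2)$, applying $\mathcal{F}$ and inserting $U(-t)=M(t)^{-1}\mathcal{F}^{-1}D(t)^{-1}M(t)^{-1}$ together with $u_j=M(t)D(t)\mathcal{F}M(t)\mathcal{F}^{-1}v_j$, and using that the gauge-type cubic $\mathcal{N}_2$ is pointwise phase invariant (so that $M(t)^{-1}\mathcal{N}_2(M(t)\cdot,M(t)\cdot)=\mathcal{N}_2(\cdot,\cdot)$), while the dilation contributes the factor $t^{-1}$ (that is, $D(t)^{-1}\mathcal{N}_2(D(t)\cdot,D(t)\cdot)=t^{-1}\mathcal{N}_2(\cdot,\cdot)$), one obtains
\[
	i\partial_t v_2=\lambda_6 t^{-1}\mathcal{F}M(t)^{-1}\mathcal{F}^{-1}\mathcal{N}_2(\mathcal{F}M\mathcal{F}^{-1}v_1,\mathcal{F}M\mathcal{F}^{-1}v_2).
\]
Adding and subtracting $\lambda_6 t^{-1}\mathcal{N}_2(v_1,v_2)$ isolates the remainder $R_2$ of \eqref{R2}, so that $i\partial_t v_2=\lambda_6 t^{-1}\mathcal{N}_2(v_1,v_2)+iR_2$; equivalently one may differentiate the Duhamel identity \eqref{E:u2int} with $t_0=1$.

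Next I would pass to $w$ via $v_2=e^{-3i\lambda_1(\Phi_1-\theta_1)}w$, abbreviating $\widetilde{v}_1:=e^{3i\lambda_1(\Phi_1-\theta_1)}v_1$, which is close to $W_1$ by \eqref{est.u1.4}. Using $\partial_t\Phi_1=t^{-1}|v_1|^2=t^{-1}|\widetilde{v}_1|^2$, the identities $|v_1|^2=|\widetilde{v}_1|^2$ and $e^{6i\lambda_1(\Phi_1-\theta_1)}v_1^2=\widetilde{v}_1^2$, and $2\lambda_6-3\lambda_1=-\eta$, a short computation gives
\[
	i\partial_t w=-\eta t^{-1}|\widetilde{v}_1|^2 w+\lambda_6 t^{-1}\widetilde{v}_1^2\,\overline w+i\,e^{3i\lambda_1(\Phi_1-\theta_1)}R_2.
\]
A second add-and-subtract, replacing $|\widetilde{v}_1|^2$ by $|W_1|^2$ and $\widetilde{v}_1^2$ by $W_1^2$, produces exactly the error term $S_2$ of \eqref{S2}, yielding
\[
	i\partial_t w=-\eta t^{-1}|W_1|^2 w+\lambda_6 t^{-1}W_1^2\,\overline w+i\bigl(S_2+e^{3i\lambda_1(\Phi_1-\theta_1)}R_2\bigr).
\]
On $\{W_1(\xi)=0\}$ this already reads $\partial_t w=S_2+e^{3i\lambda_1(\Phi_1-\theta_1)}R_2$, which is \eqref{matrix3} with the stated convention $PQ(t)P^{-1}=I_2$. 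On $\{W_1(\xi)\neq0\}$ I would write $W_1=|W_1|e^{i\theta}$, take the complex conjugate of the equation, and assemble the closed system for $(w,\overline w)^{T}$ with coefficient matrix $\frac{|W_1|^2}{t}B$ and inhomogeneous term $(S_2+e^{3i\lambda_1(\Phi_1-\theta_1)}R_2,\ \overline{S_2+e^{3i\lambda_1(\Phi_1-\theta_1)}R_2})^{T}$, where $B$ has zero trace and determinant $\eta^2-\lambda_6^2=-\mu^2$, hence eigenvalues $\mp\mu$. A direct multiplication shows that the two columns of $P$ are eigenvectors of $B$ for $-\mu$ and $\mu$, that is, $B=P\,\mathrm{diag}(-\mu,\mu)\,P^{-1}$; here $\det P=2\mu(\mu+i\eta)\neq0$ by \eqref{E:cond}, so $P$ is invertible.

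Finally, since the only time dependence of the linear part is the scalar factor $|W_1|^2/t$, whose antiderivative over $[1,t]$ is $|W_1|^2\log t$, the fundamental matrix of the homogeneous system normalized to $I_2$ at $t=1$ is $\exp\bigl(|W_1|^2(\log t)B\bigr)=P\,\mathrm{diag}(t^{-\mu|W_1|^2},t^{\mu|W_1|^2})\,P^{-1}=PQ(t)P^{-1}$, and the Duhamel (variation-of-parameters) formula for the inhomogeneous system is precisely \eqref{matrix3}. I expect the main obstacle to be the bookkeeping in the first two steps: verifying the $M(t)$- and $D(t)$-conjugation identities for the cubic nonlinearity and checking that the phase algebra makes the discrepancies collapse exactly into $R_2$ and $S_2$. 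The diagonalization of $B$ and the variation-of-parameters step are routine linear algebra, and the degenerate set $\{W_1=0\}$ is immediate.
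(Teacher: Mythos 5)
Your proposal is correct and follows essentially the same route as the paper: derive $i\partial_t v_2=\lambda_6 t^{-1}\mathcal{F}M^{-1}\mathcal{F}^{-1}\mathcal{N}_2(\mathcal{F}M\mathcal{F}^{-1}v_1,\mathcal{F}M\mathcal{F}^{-1}v_2)$, absorb the two add-and-subtract discrepancies into $R_2$ and $S_2$, pass to the system for $(w,\overline w)^T$, diagonalize via $P$, and integrate (your variation-of-parameters step is exactly the paper's integration of $\partial_t[Q(t)^{-1}P^{-1}(w,\overline w)^T]$). The linear-algebra details (eigenvalues $\mp\mu$, $\det P=2\mu(\mu+i\eta)\neq0$) and the treatment of $\{W_1=0\}$ all check out.
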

\begin{proof}
%Note that $u_2$ satisfies the integral equation: 
%Then we see that 
%Since 
%\begin{align*}
%Ju_2(t) = U(t) x u_2(1) - i \lambda_6 
%\int^t_1 U(t-\tau) J \mathcal{N}_2 (u_1(\tau), u_2(\tau)) \ d\tau. 
%\end{align*}
%and
By definition of $R_2$, one sees that $v_2$ satisfies 
\begin{align*}
i\partial_t v_2 
&=  \lambda_6 t^{-1} \mathcal{F} M(t)^{-1} \mathcal{F}^{-1} 
      \mathcal{N}_2(\mathcal{F}M(t){{\mathcal F}}^{-1}v_1, 
      \mathcal{F}M(t){{\mathcal F}}^{-1}v_2) \nonumber \\
&=  \lambda_6 t^{-1} \mathcal{N}_2(v_1,v_2) 
     + iR_2(s). %\label{eq.Fv2}
\end{align*}
%where 
%By (\ref{est.u1.3}), we know that $e^{3i\lambda_1 \Phi_1 (t)} v_1 
%\to \alpha$ as $t \to \infty$ in $L^{\infty}$, 
%where $\Phi_1 (t) = \int^t_1 \tau^{-1} |v_1|^2 \ d\tau$.
Hence, recalling the definition of  $\mathcal{N}_2$, 
one has
%\begin{align}
%i \partial_t v_2 
%=  \lambda_6 t^{-1} \left\{2|v_1|^2 v_2 
%+ %e^{3i\lambda_1 (\Phi_1(t)-\theta_1)} (
%v_1^2 
%   %)         e^{-6i\lambda_1 (\Phi_1(t)-\theta_1)} 
%   \overline{v_2} \right\}
%+ iR_2(t). 
%\label{eq.Fv2.2}
%\end{align}
%%We here consider a gauge-transform of $v_2$, i.e., we let
%%\begin{align*} 
%%w(t,\xi) := e^{3i\lambda_1 \Phi_1(t)}v_2(t). 
%%\end{align*}
%%Then $e^{-6i \lambda_1 \Phi_1(t)}$ in (\ref{eq.Fv2.2}) will drop out. In fact 
%%we have 
%This shows
\begin{align}
i\partial_t w 
& = e^{3i\lambda_1( \Phi_1 - \theta_1)} i\partial_t v_2 - 3 \lambda_1 t^{-1} |v_1|^2 w \nonumber\\
& = -\eta t^{-1} |v_1|^2 w 
       +\lambda_6 t^{-1} (e^{3i\lambda_1 (\Phi_1-\theta_1)}v_1)^2 \overline{w}
       + ie^{3i\lambda_1 (\Phi_1-\theta_1)} R_2(t) \nonumber \\
&=  -\eta t^{-1} |W_1|^2 w 
       + \lambda_6 t^{-1} W_1^2 \overline{w}
       + i(S_2(t) + e^{3i\lambda_1 (\Phi_1-\theta_1)} R_2(t)), \label{eq.w}
\end{align}
where $\eta=3\lambda_1 - 2\lambda_6$.

If $W_1(\xi_0)=0$ then we have
\[
	w(t,\xi_0) = w(1,\xi_0) + \int_1^t (S_2 + e^{3i\lambda_1 (\Phi_1-\theta_1)} R_2)(\tau,\xi_0) d\tau,
\]
which gives us \eqref{matrix3} under the convention $PQ(t) P^{-1} = I_2$.

In the sequel we suppose that $W_1(\xi_0)\neq0$. For simplicity, we omit $\xi_0$.
We write (\ref{eq.w}) in a matrix form: 
\begin{align*}
\partial_t  
\begin{pmatrix}
w \\
\overline{w}
\end{pmatrix}
&= it^{-1} |W_1|^2 
\begin{pmatrix}
\eta & - \lambda_6 e^{2i\theta}\\
\lambda_6 e^{-2i\theta} & - \eta
\end{pmatrix}
\begin{pmatrix}
w \\
\overline{w}
\end{pmatrix}
+ \begin{pmatrix}
S_2 + e^{3i\lambda_1 (\Phi_1-\theta_1)} R_2 \\
\overline{S_2 + e^{3i\lambda_1 (\Phi_1-\theta_1)} R_2}
\end{pmatrix}, %\label{matrix}
\end{align*}
where $e^{i\theta}=W_1/|W_1|$.
The $2 \times 2$ matrix on the first term of the right hand side 
%of (\ref{matrix})
 possesses the eigenvalues $\pm i \mu$. %, where $\mu=\sqrt{\lambda_6^2 - \eta^2}$.
Further, the eigenvectors associated with the eigenvalue $\pm i\mu$ are
$(\lambda_6 e^{2i\theta} \quad \eta - i \mu)^T$ 
and  
$(\eta- i \mu \quad \lambda_6 e^{-2i\theta})^T$, respectively.
% where 
%$\mu = \sqrt{-3(3\lambda_1 - \lambda_6)(\lambda_1 - \lambda_6)}$. 
Then, from the diagonalization of the matrix, it follows that 
\begin{align*}
\partial_t 
\begin{pmatrix}
w \\
\overline{w}
\end{pmatrix}
&= it^{-1} |W_1|^2 P
\begin{pmatrix}
i\mu & 0 \\
0 & - i\mu
\end{pmatrix}
P^{-1} 
\begin{pmatrix}
w \\
\overline{w}
\end{pmatrix}
+ 
\begin{pmatrix}
S_2 + e^{3i\lambda_1 (\Phi_1-\theta_1)} R_2 \\
\overline{S_2 + e^{3i\lambda_1 (\Phi_1-\theta_1)} R_2 }
\end{pmatrix}.
%. \label{matrix2}
\end{align*}  
%where 
We further rewrite it %(\ref{matrix2})
 in such a way that 
\begin{align}
&\partial_t 
%\begin{pmatrix}
%t^{\mu |\alpha|^2} & 0 \\
%0 & t^{-\mu |\alpha|^2} 
%\end{pmatrix}
Q(t)^{-1} P^{-1} 
 \begin{pmatrix}
w \\
\overline{w}
\end{pmatrix}
%\nonumber \\
%&\quad 
= 
%\begin{pmatrix}
%t^{\mu |\alpha|^2} & 0 \\
%0 & t^{-\mu |\alpha|^2} 
%\end{pmatrix}
Q(t)^{-1} P^{-1}
\begin{pmatrix}
S_2(t) + e^{3i\lambda_1 (\Phi_1-\theta_1)} R_2(t) \\
\overline{S_2(t) + e^{3i\lambda_1 (\Phi_1-\theta_1)} R_2(t) }
\end{pmatrix}
. \label{relation.w}
\end{align}
An integration in time gives us the desired formula \eqref{matrix3}.
\end{proof}

%%%%%%%%%%%%%%%%%%%%%%%%%%%%%%%
%%%%%%%%%%%%%%%%%%%%%%%%%%%%%%%
%\section{Asymptotic behavior of solutions}
%%%%%%%%%%%%%%%%%%%%%%%%%%%%%%%
%%%%%%%%%%%%%%%%%%%%%%%%%%%%%%%
%In this section, we complete the proof of Theorem \ref{1.1} by establishing 
%the asymptotic estimates.
%To this end, 

%We will obtain time-global estimates of $\|Ju_2(t)\|_{L^2}$ 
%by making use of (\ref{est.Ju2.2}), (\ref{est.u2.infty}) and (\ref{est.Fv2.infty}).
%We collect two lemmas concerning the estimates of $S_2$ and $R_2$ in (\ref{est.Fv2.infty}). 

We now estimate the error terms $R_2$ and $S_2$.

\begin{lem}\label{3.1}
Let %$\varepsilon_2=\|u_{2,0}\|_{H^1}+\|u_{2,0}\|_{H^{0,1}}$ 
%and 
$S_2$ be defined in 
$(\ref{S2})$. Then there exists some positive constant $C$ %independent of $T^{\ast}$ 
such that 
\begin{align}
\|S_2 (t)\|_{L^{\infty}} 
\le C \varepsilon^2_1  \ t^{-\frac54 + 2C_1 \varepsilon^2_1} \|v_2(t)\|_{L^\infty}
%            (\|u_2(\sigma)\|_{L^2} + \|Ju_2 (\sigma)\|_{L^2}).
\end{align}
for any $t \ge1$.
\end{lem}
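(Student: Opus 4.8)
The plan is to estimate $S_2$ termwise, using that each summand in \eqref{S2} carries a factor $t^{-1}$ multiplied by a difference ``approximation minus limit'' which is already controlled by the sharp convergence estimate \eqref{est.u1.4} for the first component.

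First I would set $g(t,\xi) := e^{3i\lambda_1(\Phi_1(t)-\theta_1)} v_1(t,\xi)$, so that $S_2 = i\eta\, t^{-1}(|g|^2 - |W_1|^2) w - i\lambda_6\, t^{-1}(g^2 - W_1^2)\overline{w}$ and $|w| = |v_2|$ pointwise. The elementary factorizations $|g|^2 - |W_1|^2 = (g-W_1)\overline{g} + W_1\overline{(g-W_1)}$ and $g^2 - W_1^2 = (g-W_1)(g+W_1)$ give the pointwise bound
\[
	\bigl| |g|^2 - |W_1|^2 \bigr| + \bigl| g^2 - W_1^2 \bigr| \le 2\,|g - W_1|\,(|g| + |W_1|).
\]

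Next I would insert the bounds on the first component. By \eqref{est.u1.4}, $\|g - W_1\|_{L^\infty} \le C\varepsilon_1 t^{-1/4 + 2C_1\varepsilon_1^2}$, whereas $\|g\|_{L^\infty} = \|v_1(t)\|_{L^\infty} \le C_2\varepsilon_1$ and $\|W_1\|_{L^\infty} \le C_2\varepsilon_1$ follow from \eqref{est.u1.2.5} (recall $W_1 = e^{-3i\lambda_1\theta_1}\alpha$). Hence $\||g|^2 - |W_1|^2\|_{L^\infty} + \|g^2 - W_1^2\|_{L^\infty} \lesssim \varepsilon_1^2 t^{-1/4 + 2C_1\varepsilon_1^2}$, and combining this with $|w| = |v_2|$, the prefactor $t^{-1}$, and $|\eta| + |\lambda_6| \lesssim 1$ yields
\[
	\|S_2(t)\|_{L^\infty} \lesssim t^{-1}\varepsilon_1^2 t^{-1/4 + 2C_1\varepsilon_1^2}\|v_2(t)\|_{L^\infty} = C\varepsilon_1^2 t^{-5/4 + 2C_1\varepsilon_1^2}\|v_2(t)\|_{L^\infty},
\]
which is the claimed estimate.

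I do not expect any serious obstacle: the statement is essentially a repackaging of the sharp rate in \eqref{est.u1.4}. The only points requiring attention are bookkeeping — one must use the explicit power $t^{-1/4 + 2C_1\varepsilon_1^2}$ from \eqref{est.u1.4} rather than mere convergence, and one must retain $\|v_2(t)\|_{L^\infty}$ on the right-hand side (it is not yet known to be bounded at this stage), so that the lemma is genuinely a \emph{linear} estimate in $v_2$, as required for the subsequent Gronwall-type argument on $w$ via \eqref{matrix3}.
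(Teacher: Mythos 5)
Your proof is correct and is essentially the paper's argument: the paper also bounds $\|S_2(t)\|_{L^\infty}$ by $t^{-1}\,\|e^{3i\lambda_1(\Phi_1-\theta_1)}v_1 - W_1\|_{L^\infty}\,(\|v_1\|_{L^\infty}+\|W_1\|_{L^\infty})\,\|v_2\|_{L^\infty}$ and concludes via \eqref{est.u1.2.5} and \eqref{est.u1.4}. Your termwise factorization and the remark that the estimate must stay linear in $\|v_2\|_{L^\infty}$ are just a more explicit write-up of the same one-line proof.
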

\begin{proof}
It is obvious by
\begin{align*}
	\|S_2(t)\|_{L^\infty} \lesssim t^{-1} \| v_1e^{3i\lambda_1 (\Phi_1-\theta_1)} -W_1 \|_{L^\infty}
	(\|v_1(t) \|_{L^\infty} + \|W_1 \|_{L^\infty}) \|v_2(t)\|_{L^\infty}
\end{align*}
and \eqref{est.u1.2.5} and \eqref{est.u1.4}.
\end{proof}

\begin{lem}\label{3.2}
Let $\varepsilon_2=\|u_{2,0}\|_{H^1}+\|u_{2,0}\|_{H^{0,1}}$ and $R_2$ 
be defined in 
$(\ref{R2})$. Then there exists some positive constant $C$ %independent of $T^{\ast}$ 
such that 
\begin{align}
\|R_2 (t)\|_{L^{\infty}} 
\le C \varepsilon^2_1  \ t^{-\frac54 + 2C_1 \varepsilon^2_1 } \|v_2(t)\|_{H^1}
%     (\|u_2(\sigma)\|_{L^2} + \|Ju_2 (\sigma)\|_{L^2}). 
\end{align}
for any $t \ge 1$.
\end{lem}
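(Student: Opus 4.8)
The plan is to estimate $R_2$ directly from its definition \eqref{R2}, exploiting the commutator structure $\mathcal{F}M^{-1}\mathcal{F}^{-1}-1$ (equivalently $U(-1/t)-1$ by Lemma \ref{lem:2.1}) which is what produces the gain $t^{-1/4}$ over the trivial bound $t^{-1}$. Write
\[
	R_2(t)= -i\lambda_6 t^{-1}\bigl(\mathcal{F}M(t)^{-1}\mathcal{F}^{-1}-1\bigr)\mathcal{N}_2(\mathcal{F}M\mathcal{F}^{-1}v_1,\mathcal{F}M\mathcal{F}^{-1}v_2)(t)
	-i\lambda_6 t^{-1}\bigl[\mathcal{N}_2(\mathcal{F}M\mathcal{F}^{-1}v_1,\mathcal{F}M\mathcal{F}^{-1}v_2)-\mathcal{N}_2(v_1,v_2)\bigr](t).
\]
For the first term, apply \eqref{E:U-1} of Lemma \ref{lem:2.1}: it is bounded by $t^{-1}\cdot t^{-1/4}\|\mathcal{N}_2(\mathcal{F}M\mathcal{F}^{-1}v_1,\mathcal{F}M\mathcal{F}^{-1}v_2)\|_{\dot H^1}$. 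For the second term, I would telescope the difference of two trilinear expressions into a sum of terms in each of which exactly one factor carries $\mathcal{F}M\mathcal{F}^{-1}-1$, and again invoke \eqref{E:U-1} on that factor (in $L^\infty$), estimating the remaining two factors in their natural norms. In both cases the trilinear form $\mathcal{N}_2$ is linear in the second slot, so each term is quadratic in $v_1$ (or in $\mathcal{F}M\mathcal{F}^{-1}v_1$) and linear in $v_2$ (or $\mathcal{F}M\mathcal{F}^{-1}v_2$).

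The key inputs are then: (i) $\|\mathcal{F}M\mathcal{F}^{-1}v_1\|_{L^\infty}\lesssim \|v_1\|_{L^\infty}+t^{-1/4}\|v_1\|_{\dot H^1}\lesssim \varepsilon_1\langle t\rangle^{C_1\varepsilon_1^2}$, using \eqref{est.u1.2.5}, \eqref{est.u1.1} and the fact that $\mathcal{F}M\mathcal{F}^{-1}=U(-1/t)$ is $L^2$-bounded while \eqref{E:U-1} controls the $L^\infty$-difference; and the analogous bound $\|\mathcal{F}M\mathcal{F}^{-1}v_1\|_{\dot H^1}=\|v_1\|_{\dot H^1}\lesssim\varepsilon_1\langle t\rangle^{C_1\varepsilon_1^2}$ since $U(-1/t)$ is unitary on $\dot H^1$; (ii) the same two estimates for $v_2$ with $\varepsilon_1\langle t\rangle^{C_1\varepsilon_1^2}$ replaced by $\|v_2\|_{L^\infty}$, $\|v_2\|_{H^1}$ respectively — and here note $\|v_2\|_{L^\infty}\lesssim\|v_2\|_{H^1}$ by Sobolev embedding in one dimension, so everything can be absorbed into the single factor $\|v_2(t)\|_{H^1}$ appearing in the statement; (iii) that $\mathcal{F}M\mathcal{F}^{-1}$ preserves $H^1$ up to constants, so $\dot H^1$-norms of products can be distributed by the Leibniz rule together with the $1$-d algebra estimate $\|fg\|_{\dot H^1}\lesssim \|f\|_{L^\infty}\|g\|_{\dot H^1}+\|f\|_{\dot H^1}\|g\|_{L^\infty}$. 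Collecting powers of $t$: the prefactor $t^{-1}$, the commutator gain $t^{-1/4}$, and two factors $\langle t\rangle^{C_1\varepsilon_1^2}$ from $v_1$ combine to $\varepsilon_1^2\, t^{-5/4+2C_1\varepsilon_1^2}\|v_2(t)\|_{H^1}$, which is exactly the claimed bound.

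The main obstacle is bookkeeping rather than conceptual: one must be careful that in the second (difference) term the single "bad" factor $(\mathcal{F}M\mathcal{F}^{-1}-1)$ is always placed on a factor whose remaining norm after applying \eqref{E:U-1} is $\dot H^1$, while the other two factors are measured in $L^\infty$ — and that for the $v_2$-factor one only ever spends $\|v_2\|_{H^1}$ (never $\|v_2\|_{\dot H^1}\cdot\|v_2\|_{L^\infty}$ type products, since $\mathcal{N}_2$ is only linear in $u_2$). Since $\mathcal{N}_2(u_1,u_2)=2|u_1|^2u_2+u_1^2\overline{u_2}$ contains complex conjugates, I would also note that $\mathcal{F}M\mathcal{F}^{-1}\overline{f}=\overline{\mathcal{F}\overline{M}\mathcal{F}^{-1}f}$ and $|\overline M-1|=|M-1|$, so the conjugated factors obey the same estimates and no new difficulty arises. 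No smallness of $\varepsilon_2$ is used anywhere, consistent with the hypotheses of Theorem \ref{1.1}.
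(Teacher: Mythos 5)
Your proposal is correct and follows essentially the same route as the paper: split $R_2$ using the commutator $\mathcal{F}M\mathcal{F}^{-1}-1=U(-1/t)-1$, apply the $t^{-1/4}$ gain from Lemma \ref{lem:2.1} to the factor carrying it, and close with the $H^1$-algebra/Sobolev bounds, the unitarity of $U(-1/t)$ on $H^1$, and the estimates \eqref{est.u1.1}, \eqref{est.u1.2.5} for $v_1$, yielding $\varepsilon_1^2 t^{-5/4+2C_1\varepsilon_1^2}\|v_2(t)\|_{H^1}$. The paper merely organizes the telescoping as three pieces $R_{21}+R_{22}+R_{23}$ and bounds each by $t^{-5/4}\|v_1\|_{H^1}^2\|v_2\|_{H^1}$; your finer factor-by-factor bookkeeping is the same argument.
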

\begin{proof}
We have
\begin{align*}
R_2
&= -i\lambda_6 t^{-1} \mathcal{F}(M^{-1}-1) \mathcal{F}^{-1} 
                   \mathcal{N}_2 (\mathcal{F}M{{\mathcal F}}^{-1}v_1, 
                   \mathcal{F}M{{\mathcal F}}^{-1}v_2) \\
& \quad -i \lambda_6 t^{-1} ( \mathcal{N}_2 (\mathcal{F}M{{\mathcal F}}^{-1}v_1, \mathcal{F}M{{\mathcal F}}^{-1}v_2) - \mathcal{N}_2 (v_1,\mathcal{F}M{{\mathcal F}}^{-1} v_2) ) \\
& \quad -i \lambda_6 t^{-1} ( \mathcal{N}_2 (v_1, \mathcal{F}M{{\mathcal F}}^{-1}v_2) - \mathcal{N}_2 (v_1,v_2))\\
& =: R_{21}  + R_{22} + R_{23}. 
\end{align*}
Combining \eqref{E:U-1},
the fact that $H^1(\re)$ is an algebra and that $U(-1/t)$ is unitary on $H^1(\re)$, we have
\begin{align*}
	\| R_{21} \|_{L^\infty} \lesssim{}& t^{-\frac54}  
	\| \mathcal{N}_2 (U(-1/t)v_1, U(-1/t)v_2)\|_{H^1} \\
	\lesssim{}& t^{-\frac54 }  
	\|U(-1/t)v_1\|_{H^1}^2 \|U(-1/t)v_2\|_{H^1} 
	=	t^{-\frac54 }  
	\|v_1\|_{H^1}^2 \|v_2\|_{H^1}
\end{align*}
for $t\ge1$.
Similarly,
%\begin{align*}
%	&\| R_{22} \|_{L^\infty} \\
%	&{}\lesssim t^{-1}  
%	\| (U(-1/t)-1) v_1\|_{L^\infty}(\| U(-1/t) v_1\|_{L^\infty} + \| v_1\|_{L^\infty})\| U(-1/t) v_2\|_{L^\infty} \\
%	&{}\lesssim t^{-\frac54 + \frac{\delta}2}  
%	\|v_1\|_{H^1}^2 \|v_2\|_{H^1}.
%\end{align*}
%and
%\begin{align*}
%	\| R_{23} \|_{L^\infty}
%	&{}\lesssim t^{-1}  
%	\| v_1\|_{L^\infty}^2 \| (U(-1/t)-1) v_2\|_{L^\infty} \\
%	&{}\lesssim t^{-\frac54 + \frac{\delta}2}  
%	\|v_1\|_{H^1}^2 \|v_2\|_{H^1}.
%\end{align*}
one has
\[
	\| R_{22} \|_{L^\infty} + \| R_{23} \|_{L^\infty} \lesssim
	t^{-\frac54}  
	\|v_1\|_{H^1}^2 \|v_2\|_{H^1}.
\]
Hence, using the fact that $\|v_1\|_{H^1}^2 = \|u_1\|_{L^2}^2 + \|Ju_1\|_{L^2}^2$ and 
the bound \eqref{est.u1.1} for $u_1$, we obtain the result.
\end{proof}
%Lemma \ref{3.1} and \ref{3.2} will be proved in the next subsection. 

Now we are in the position to complete the proof of the global bound.
\begin{lem}
$T^\ast = \infty$ for sufficiently small $\varepsilon_1$, where $T^\ast$ is given in \eqref{lifespan}.
\end{lem}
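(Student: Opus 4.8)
The plan is to run a standard continuity (bootstrap) argument. Assume, for contradiction, that $T^\ast<\infty$, and write $X(t):=\|u_2(t)\|_{H^1}+\|Ju_2(t)\|_{L^2}$, so that $X(t)<4\varepsilon_2\langle t\rangle^{\delta}$ on $[0,T^\ast)$ by the definition \eqref{lifespan}. Recall from \eqref{est.u1.1}--\eqref{est.u1.2} that $\|u_1(t)\|_{L^\infty}\lesssim\varepsilon_1\langle t\rangle^{-1/2}$ and $Y(t):=\|u_1(t)\|_{H^1}+\|Ju_1(t)\|_{L^2}\lesssim\varepsilon_1\langle t\rangle^{C_1\varepsilon_1^{2}}$, and note that, since $\|v_2(t)\|_{L^2}=\|u_2(t)\|_{L^2}$ and $\|v_2(t)\|_{\dot{H}^1}=\|Ju_2(t)\|_{L^2}$, the a priori bound also gives $\|v_2(t)\|_{L^\infty}+\|v_2(t)\|_{H^1}\lesssim\varepsilon_2\langle t\rangle^{\delta}$. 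By the first lemma of this subsection we may, after fixing $\varepsilon_1$ small, assume $T^\ast>2$ and $X(1)\le 2\varepsilon_2$; from now on $t\ge1$.

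The key preliminary step is to upgrade the a priori $L^\infty$-bound on $u_2$ to one with the sharp time-exponent, by exploiting the representation \eqref{matrix3}. The entries of $P,P^{-1}$ are bounded uniformly in $\xi$ by constants depending only on $\lambda_1,\lambda_6$ (using $|\eta-i\mu|=|\lambda_6|$), so $\|PQ(t)P^{-1}\|+\|PQ(t)Q(\tau)^{-1}P^{-1}\|\lesssim t^{\mu|W_1(\xi)|^{2}}(1+\tau^{\mu|W_1(\xi)|^{2}})\le\langle t\rangle^{C\varepsilon_1^{2}}\langle\tau\rangle^{C\varepsilon_1^{2}}$, where $\|W_1\|_{L^\infty}\lesssim\varepsilon_1$ by \eqref{est.u1.2.5}, the matrix factor being $I_2$ on $\{W_1=0\}$. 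For the forcing, Lemma~\ref{3.1}, Lemma~\ref{3.2} and the a priori bound give $\|S_2(\tau)\|_{L^\infty}+\|R_2(\tau)\|_{L^\infty}\lesssim\varepsilon_1^{2}\varepsilon_2\langle\tau\rangle^{-5/4+\delta+2C_1\varepsilon_1^{2}}$; since $\delta<1/4$, this is integrable in $\tau$ for small $\varepsilon_1$ even after multiplication by $\langle\tau\rangle^{C\varepsilon_1^{2}}$, so the integral term in \eqref{matrix3} is $\lesssim\varepsilon_1^{2}\varepsilon_2\langle t\rangle^{C\varepsilon_1^{2}}$. Together with $\|v_2(1)\|_{L^\infty}\lesssim\|v_2(1)\|_{H^1}\le 2\varepsilon_2$ this yields $\|v_2(t)\|_{L^\infty}=\|w(t)\|_{L^\infty}\lesssim\varepsilon_2\langle t\rangle^{C\varepsilon_1^{2}}$, and then, inserting this into \eqref{E:u2v2} and applying Lemma~\ref{lem:2.1} with $\|v_2(t)\|_{\dot{H}^1}=\|Ju_2(t)\|_{L^2}\le X(t)<4\varepsilon_2\langle t\rangle^{\delta}$ (and $\delta<1/4$ once more), we get $\|u_2(t)\|_{L^\infty}\lesssim\varepsilon_2\langle t\rangle^{-1/2+C\varepsilon_1^{2}}$.

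Next, applying $U(-t)$ and $J$ to \eqref{E:u2int} with $t_0=1$, using that $U$ is unitary on $H^1$ and $L^2$, that $J$ commutes with $U$, and the identity \eqref{E:JN2}, one obtains
\[
X(t)\le X(1)+C\int_1^t\big(\|u_1(\tau)\|_{L^\infty}^{2}X(\tau)+\|u_1(\tau)\|_{L^\infty}\|u_2(\tau)\|_{L^\infty}Y(\tau)\big)\,d\tau .
\]
Plugging in the bounds on $u_1$, $Y$ and the decay from the previous step, the second part of the integrand is $\lesssim\varepsilon_1^{2}\varepsilon_2\tau^{-1+C'\varepsilon_1^{2}}$, whose integral over $[1,t]$ is $\lesssim\varepsilon_2\langle t\rangle^{C'\varepsilon_1^{2}}$ --- here the crucial point is that the prefactor $\varepsilon_1^{2}$ is cancelled by the $(C'\varepsilon_1^{2})^{-1}$ arising from $\int_1^t\tau^{-1+C'\varepsilon_1^{2}}\,d\tau$, so the bound carries a constant independent of $T^\ast$. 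Gronwall's inequality applied to the remaining $\tau^{-1}X(\tau)$ term then gives $X(t)\le C_0\varepsilon_2\langle t\rangle^{C_0\varepsilon_1^{2}}$ on $[0,T^\ast)$, with $C_0$ depending only on $\lambda_1,\lambda_6$ and universal constants. Finally, fix $\varepsilon_1$ so small that also $C_0\varepsilon_1^{2}<\delta/2$; since $u_2,Ju_2\in C(\re;L^2)$, continuity and the maximality in \eqref{lifespan} force $\langle T^\ast\rangle^{-\delta}X(T^\ast)=4\varepsilon_2$, hence $4\langle T^\ast\rangle^{\delta}\le C_0\langle T^\ast\rangle^{C_0\varepsilon_1^{2}}$, i.e. $\langle T^\ast\rangle^{\delta-C_0\varepsilon_1^{2}}\le C_0/4$; as $\delta-C_0\varepsilon_1^{2}>\delta/2$, this bounds $\langle T^\ast\rangle$ by a constant $T_1$ depending only on $\delta$ and $C_0$. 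Applying the first lemma of this subsection with $T_0=T_1$ (shrinking $\varepsilon_1$ once more) gives $T^\ast>T_1\ge\langle T^\ast\rangle>T^\ast$, absurd; hence $T^\ast=\infty$.

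The main obstacle is the preliminary upgrade of the $L^\infty$-bound. The bootstrap hypothesis alone only supplies $\|u_2(t)\|_{L^\infty}\lesssim\varepsilon_2\langle t\rangle^{-1/2+\delta}$, and with this weaker decay the term $\|u_1\|_{L^\infty}\|u_2\|_{L^\infty}Y$ in the energy estimate would, after integration, inject an $\langle t\rangle^{\delta}$-growth with an $O(1)$ (rather than $O(\varepsilon_1^{2})$) coefficient, so nothing would close. Obtaining the sharp exponent $-1/2+C\varepsilon_1^{2}$ with $C\varepsilon_1^{2}<\delta$ is precisely where the algebraic structure of \eqref{NLS} is used: the diagonalization \eqref{matrix3} of the $u_2$-equation, combined with the smallness $\|W_1\|_{L^\infty}\lesssim\varepsilon_1$, confines the amplification to the slowly-growing factor $\langle t\rangle^{\mu\|W_1\|_{L^\infty}^{2}}$.
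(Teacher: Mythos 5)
Your proposal is correct and takes essentially the same route as the paper: the key step in both is upgrading the bootstrap bound to $\|u_2(t)\|_{L^\infty}\lesssim \varepsilon_2 t^{-1/2+C\varepsilon_1^2}$ via the matrix representation \eqref{matrix3} together with Lemmas \ref{3.1} and \ref{3.2}, and then closing the $H^1$/$J$ energy estimate by Gronwall using $C\varepsilon_1^2<\delta$ and the $\varepsilon_1^2$-cancellation in $\int_1^t\tau^{-1+C\varepsilon_1^2}d\tau$. The only (harmless) difference is the endgame bookkeeping: the paper improves the bootstrap constant from $4\varepsilon_2\langle t\rangle^{\delta}$ to $3\varepsilon_2\langle t\rangle^{\delta}$ on $[0,T^\ast)$ to contradict maximality, whereas you deduce an $\varepsilon_1$-independent upper bound $T_1$ on $T^\ast$ and contradict the short-time lemma with $T_0=T_1$ — both are valid.
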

\begin{proof}
We prove the lemma by contradiction. 
Fix $\delta\in (0,1/5)$ and
assume that $T^{\ast}$ is finite for any small $\varepsilon_1>0$. 
Then, we have a bound
\begin{equation}\label{E:bound_pf_asmp}
	\|u_2(t)\|_{H^1} + \| Ju_2 (t)\|_{L^2} < 4 \varepsilon_2 \langle t\rangle^{\delta}
\end{equation}
for $t\in (0,T^\ast)$. Further, by continuity of the solution, one has
\[
\|u_2(T^{\ast})\|_{H^1} + \| Ju_2 (T^{\ast})\|_{L^2} = 4 \varepsilon_2 \langle T^{\ast}\rangle^{\delta}.
\]
To obtain a contradiction, we shall show \eqref{E:bound_pf_asmp} gives us a better bound
\begin{equation}\label{E:bound_pf_goal}
	\|u_2(t)\|_{H^1} + \| Ju_2 (t)\|_{L^2} \le 3 \varepsilon_2 \langle t\rangle^{\delta}
\end{equation}
for $t\in (0,T^\ast)$
and sufficiently  small $\varepsilon_1$.
Once \eqref{E:bound_pf_goal} is established, by letting $t \uparrow T^{\ast}$ in \eqref{E:bound_pf_goal},
%in (\ref{final.est.}), 
we see that $4 \varepsilon_2 \le 3 \varepsilon_2$. 
This is 
the contradiction, and we have $T^{\ast} = \infty$. 
%Hence, we consider $t>2$.

Let $T_0>1$ be a number to be chosen later.
Note that if $\varepsilon_1<\varepsilon_1^\ast(T_0)$  then
$T^\ast > T_0$ and
\eqref{E:u2bound0to1} gives us the desired bound
for $t\in [0,T_0]$.

We obtain the estimates for large $t$.
%Let us estimate $\| Ju_2 (t)\|_{L^2}$.
%Let $\varepsilon_2=\|u_{2,0}\|_{H^1}+\|u_{2,0}\|_{H^{0,1}}$. 
We deduce from \eqref{E:u2intJ} with $t_0=1$ that
\begin{equation}\label{est.Ju2}
\begin{aligned}
{\|Ju_2(t)\|_{L^2}}
\le{}& \|Ju_2(1)\|_{L^2} \\
 &{}+ C \int^t_1 (\|u_1\|^2_{L^{\infty}} \|Ju_2\|_{L^2}
  + \|u_1\|_{L^{\infty}} \|u_2\|_{L^{\infty}} \|Ju_1\|_{L^2})(\tau) \ d\tau. 
\end{aligned}
\end{equation}
Similarly,
\begin{equation}\label{est.H1u2}
\begin{aligned}
{\|u_2(t)\|_{H^1}}
\le{}& \|u_2(1)\|_{H^1} \\
 &{}+ C \int^t_1 (\|u_1\|^2_{L^{\infty}} \|u_2\|_{H^1}
  + \|u_1\|_{L^{\infty}} \|u_2\|_{L^{\infty}} \|u_1\|_{H^1})(\tau) \ d\tau. 
\end{aligned}
\end{equation}
Applying (\ref{est.u1.1}), (\ref{est.u1.2}), and \eqref{E:u2bound0to1} to (\ref{est.Ju2}) and (\ref{est.H1u2}), we see that 
\begin{align}
{\|u_2(t)\|_{H^1}}
+\|Ju_2(t)\|_{L^2} 
&\le 2\varepsilon_2 + C \varepsilon^2_1 \int^t_1 \tau^{-1} ({\|u_2(\tau)\|_{H^1}}
+\|Ju_2(\tau)\|_{L^2}) \ d\tau
\nonumber \\
& \qquad + C \varepsilon^2_1 \int^t_1 \tau^{-1/2 + C_1\varepsilon_1^2} 
       \|u_2(\tau)\|_{L^{\infty}} \ d\tau \label{est.Ju2.2}
\end{align}
for $t \ge 1$ if $\varepsilon_1 \le \varepsilon_1^\ast(1)$.

We next claim that \eqref{E:bound_pf_asmp} implies
that
\begin{align}
\|u_2 (t)\|_{L^{\infty}} 
&\le C \varepsilon_2 (1 + \varepsilon^2_1) t^{-1/2 + \mu C_2^2\varepsilon_1^2}
\label{est.u2.infty.2}
\end{align}
for $t \in [1,T^\ast)$.
%Our interest is now to see how to estimate $\|u_2 (t)\|_{L^{\infty}}$. 
%It is reduced to the bound of  $\|v_2 (t)\|_{L^{\infty}}$.
%This is because, one has
It follows from \eqref{E:bound_pf_asmp} that
\begin{align}
\|u_2 (t)\|_{L^{\infty}} 
%&= \|U(t) {{\mathcal F}}^{-1}v_2 (t)\|_{L^{\infty}} \nonumber \\
&= \|M(t)D(t)\mathcal{F}M(t) {{\mathcal F}}^{-1}v_2(t)\|_{L^{\infty}} \nonumber \\
&\le t^{-1/2} \|v_2(t)\|_{L^{\infty}} 
       + t^{-1/2} \|\mathcal{F} (M(t)-1) {{\mathcal F}}^{-1}v_2(t)\|_{L^{\infty}} \nonumber \\
&\le t^{-1/2} \|v_2(t)\|_{L^{\infty}} 
        + Ct^{-3/4} \|Ju_2 (t)\|_{L^2} \nonumber \\
&\le t^{-1/2} \|v_2(t)\|_{L^{\infty}} 
        + C \varepsilon_2 \ t^{-3/4 + \delta}. 
\label{est.u2.infty}
\end{align}
on the interval $[1,T^\ast)$.
The second term is acceptable since $\delta< 1/4$.
We estimate $\| v_2(t)\|_{L^\infty}= \|w(t)\|_{L^{\infty}}$.
By 
\[
	Q(t)Q(\tau)^{-1} = Q(t\tau^{-1})
\]
for $1 \le \tau \le t$, (\ref{matrix3}) and \eqref{est.u1.2.5}  yield
\begin{align*}
\|v_2 (t)\|_{L^{\infty}} 
\le C t^{\mu C_2^2 \varepsilon_1^2} \|w(1)\|_{L^{\infty}} 
    + C t^{\mu C_2^2 \varepsilon_1^2}\int^t_1 
          (\|S_2(\tau)\|_{L^{\infty}} + \|R_2(\tau)\|_{L^{\infty}}) \ d\tau. 
%          \label{est.Fv2.infty}
\end{align*}
Further, applying 
Lemmas \ref{3.1} and \ref{3.2}, the embedding $H^1 \hookrightarrow L^\infty$, 
the identities $\|v_2(t)\|_{L^2}=\|u_2\|_{L^2}$ and $\|v_2\|_{\dot{H}^1} = \|Ju_2\|_{L^2}$,
and \eqref{E:bound_pf_asmp},
we have 
\begin{align}
\|v_2 (t)\|_{L^{\infty}} 
& \lesssim  \varepsilon_2 t^{\mu C_2^2 \varepsilon_1^2}
    +  \varepsilon^2_1 \varepsilon_2 \ t^{\mu C_2^2 \varepsilon_1^2}\int^t_1 
          \tau^{-5/4 + 2C_1 \varepsilon^2_1 + \delta} \ d\tau \nonumber \\
& \lesssim \varepsilon_2 t^{\mu C_2^2 \varepsilon_1^2} +  \varepsilon^2_1 \varepsilon_2 \ 
t^{\mu C_2^2 \varepsilon_1^2} 
\label{est.Fv2.infty.2}
\end{align}
for $t \in [1, T^{\ast})$, if $\varepsilon_1>0$ is taken so that $2C_1 \varepsilon^2_1 \le 1/40$. 
Note that the implicit constants are independent of $\varepsilon_1$ and $\delta$ since
$2C_1 \varepsilon^2_1 +\delta \le 1/40+1/5<1/4$.
Plugging (\ref{est.Fv2.infty.2}) to (\ref{est.u2.infty}),
we reach to the claim \eqref{est.u2.infty.2}.
%\begin{align*}
%\|u_2 (t)\|_{L^{\infty}} 
%&\le C \varepsilon_2 (1 + \varepsilon^2_1) t^{-1/2 + \mu |\alpha|^2}
%+ C\varepsilon_2 t^{-3/4 + \delta} \nonumber \\
%&\le C \varepsilon_2 (1 + \varepsilon^2_1) t^{-1/2 + \mu |\alpha|^2}
%%\label{est.u2.infty.2}
%\end{align*}
%since $\delta < 1/4$. 

Applying the bound (\ref{est.u2.infty.2}) to the third term on the right hand side 
of (\ref{est.Ju2.2}), and we have 
\begin{align*}
{\|u_2(t)\|_{H^1}}+
\|Ju_2(t)\|_{L^2} 
\le{}& 2 \varepsilon_2 
+ C_3 \varepsilon_2 
t^{(C_1 +C^2_2\mu)\varepsilon^2_1}\\
&+ C_4 \varepsilon^2_1 \int^t_1 \tau^{-1} ({\|u_2(\tau)\|_{H^1}}
+\|Ju_2(\tau)\|_{L^2}) \ d\tau .
\end{align*}
Without loss of generality, one may suppose that
$C_4 \ge 2(C_1+C_2^2 \mu)$.
Then, Gronwall's inequality yields 
\begin{align}
{\|u_2(t)\|_{H^1}}+\|Ju_2(t)\|_{L^2} 
\le {}& 2 \varepsilon_2 
+ C_3 \varepsilon_2 
t^{(C_1 +C^2_2\mu)\varepsilon^2_1}\nonumber\\
&+ \int_1^t (2 \varepsilon_2 
+ C \varepsilon_2 
\tau^{(C_1 +C^2_2\mu)\varepsilon^2_1}) C_4 \varepsilon_1^2 \tau^{-1} (t/\tau)^{C_4 \varepsilon_1^2}d\tau\nonumber\\
={}& 2\varepsilon_2 t^{C_4 \varepsilon_1^2}
- \frac{C_1+ C_2^2\mu}{C_4-(C_1+ C_2^2\mu)} C_3 \varepsilon_2 
t^{(C_1 +C^2_2\mu)\varepsilon^2_1} \nonumber\\
&{}+ \frac{C_4}{C_4-(C_1+ C_2^2\mu)} C_3 \varepsilon_2 
t^{C_4\varepsilon^2_1} \nonumber\\
\le{}& (2+ C_3/2) \varepsilon_2 t^{C_4\varepsilon^2_1} \label{E:u2_final_est}
\end{align}
for $t\ge1$.

We first let $\varepsilon_1$ so small that $C_4 \varepsilon_1^2 \le \delta/2$ holds.
Then, we next choose $T_0>1$ so that
$(2+ C_3/2) T_0^{\delta/2}\le 3 T_0^{\delta}$.
If $\varepsilon_1 < \varepsilon_1^\ast(T_0)$, we 
see from \eqref{E:u2bound0to1} that
$T^\ast>T_0$ and
\[
\sup_{t\in[0,T_0]}(\|u_2 (t)\|_{H^1} + \|Ju_2(t)\|_{L^2} )
 \le 2\varepsilon_2.
\]
On the other hand, thanks to the choice of $T_0$, \eqref{E:u2_final_est} gives us
\[
	\sup_{t\in [T_0,T^\ast)}	\langle t \rangle^{-\delta}(\|u_2 (t)\|_{H^1} + \|Ju_2(t)\|_{L^2})
		\le 3 \varepsilon_2 .
\]
Thus, we obtain \eqref{E:bound_pf_goal}.
%
%In the same way, we have 
%\begin{align}
%\|u_2(t)\|_{H^1} 
%\le 3 \varepsilon_2 t^{(C_3 +C_1 +C^2_2 \mu) \varepsilon^2_1}\log t.  
%\label{H^1.est.}
%\end{align}
%Combining (\ref{weight.est.}) and (\ref{H^1.est.}) together with 
%$\delta = (C +C_1 +C^2_2 \mu) \varepsilon^2_1$, we have 
%\begin{align}
%\langle t \rangle^{-\delta} (\|u_2(t)\|_{H^1} + \|Ju_2(t)\|_{L^2}) 
%\le 8 \varepsilon_2 \label{final.est.}
%\end{align}
%for $t \in [1,T^\ast)$.
%
\end{proof}

Le us complete the proof of the bound \eqref{upper_bound}.
If $\varepsilon_1$ is sufficiently small, we have 
$T^\ast = \infty$.
Then, the claim \eqref{est.u2.infty.2} holds for $t\in [1,\infty)$.
Further, arguing as in the proof of  \eqref{E:u2_final_est}, we obtain
\begin{equation}\label{E:u2.good_bound}
	{\|u_2(t)\|_{H^1}}+\|Ju_2(t)\|_{L^2} 
	\lesssim \varepsilon_2 t^{C_4 \varepsilon_1^2}
\end{equation}
for $t\ge1$. Combining this with \eqref{E:u2bound0to1}, we obtain the desired estimate \eqref{upper_bound}.

%\begin{rem}(あとで消す注意)
%前のバージョンでは,
%\[
%	\int_1^t s^{-1 + \varepsilon} ds =\varepsilon^{-1} (t^{\varepsilon}-1)
%\]
%の評価において, 係数の$\varepsilon^{-1}$を出さないようにするため,
%\[
%	\int_1^t s^{-1 + \varepsilon} ds \le t^{\varepsilon}\log t
%\]
%と置き換えて評価していた. しかし, 実はここで$\varepsilon_1^{-1}$が消えたように見えるのは見かけだけで, この後で,
%\[
%	\log t \le C_\varepsilon t^{\varepsilon}	
%\]
%などと$t$の小さい冪でさらに上から抑える際に, 定数として
%$C_\varepsilon = e^{-1} \varepsilon^{-1}$
%が出てしまう.
%この定数の評価も含めると$\delta$や$\varepsilon_1$を絞っただけでは矛盾を導くために必要な係数のsmallnessがでないことがわかった（やり方が悪かっただけの可能性はあるが）.
%
%そこで,この評価はやめて, Gronwall を正確に計算することにした.
%上の理由から$\varepsilon_1$を絞っただけでは必要なsmallnessがでないので, 
%$t$の範囲を大きいところに制限することで $t$の冪の差から所望の smallness を導出した.
%それで, $t=1$で評価の仕方を分けていたところを,
%大きい数$T_0$ でわけることになった.
%なお, 時刻$T_0$までは初期値問題を解いて bound を出している.
%また$T^\ast$の定義における係数$10$は, そうとる理由が乏しくなったため $4$に変更した.
%4も1の次（初期値問題として評価する時間の終了時刻$T_0$での解の大きさの係数）の次（最終的に$t=\infty$付近で得られるbound の係数）の次の整数という程度の意味しかないが.
%\end{rem}

\subsection{Asymptotic behavior}

We complete the proof of Theorem 1.1 by establishing (\ref{asymp2}).

%%%%%%%%%%%%%%%%%%%%%%%%
\begin{proof}[Proof of Theorem \ref{1.1}]
%%%%%%%%%%%%%%%%%%%%%%%%
Let %$\alpha \in L^\infty$, 
$\Phi_1 \in L^\infty$ and
$W_1\in L^\infty$ be the functions defined in the proof of \eqref{asymp1}.

If $W_1(\xi_0)=0$ at some $\xi_0 \in \R$
then
one sees from Lemmas \ref{3.1} and \ref{3.2}, \eqref{eq.w}, and \eqref{E:u2.good_bound} that
there exists $\beta_0=\beta_0(\xi_0)$ such that
\begin{equation}\label{asympa0}
	w(t,\xi_0) = \beta(\xi_0) + O(t^{-1/4 + C_4 \varepsilon_1^2}).
\end{equation}
Note that $\beta_0$ and the second term of the right hand side are bounded uniformly in $\xi_0 \in \R$.

We now consider the case $W_1(\xi_0) \neq 0$.
Plugging Lemmas \ref{3.1} and \ref{3.2} and \eqref{E:u2.good_bound} to (\ref{relation.w}), 
we see that there exist some $\beta_1 , \beta_2 \in L^{\infty}$ such that  
\begin{equation}\label{asymppf1}
 Q(t)^{-1}P^{-1} \begin{pmatrix}
w(\xi_0) \\
\overline{w(\xi_0)}
\end{pmatrix}
 = \begin{pmatrix}
\beta_1(\xi_0) \\
\beta_2(\xi_0) 
\end{pmatrix} 
+ O(\varepsilon_1^2\varepsilon_2
t^{-1/4 + C_1 \varepsilon^2_1 + 2\mu C_2 \varepsilon^2_1 + C_4 \varepsilon_1^2})
\end{equation}
%in $L^{\infty}$ 
as $t \to \infty$. 
The second term of the right hand side is bounded uniformly in $\xi_0 \in \R$.
Let us introduce
\[
	Z := \left\{ \begin{pmatrix}
z \\
\overline{z}
\end{pmatrix}
\in \mathbb{C}^2 \ \middle| \ z \in \mathbb{C}  \right\},
\]
which is a closed subspace of $\mathbb{C}^2$.
It follows from \eqref{asymppf1} that
\begin{align*}
	\lim_{t\to\infty} t^{-\mu |W_1|^2 } 
	P^{-1} \begin{pmatrix}
w(\xi_0) \\
\overline{w(\xi_0)}
\end{pmatrix} 
&{}= \lim_{t\to\infty} \begin{pmatrix} t^{-2\mu |W_1|^2} & 0 \\ 0 & 1 \end{pmatrix}  Q(t)^{-1}P^{-1} \begin{pmatrix}
w(\xi_0) \\
\overline{w(\xi_0)}
\end{pmatrix}
\\
&{}=\begin{pmatrix} 0 & 0 \\ 0 & 1 \end{pmatrix} \begin{pmatrix}
\beta_1(\xi_0) \\
\beta_2(\xi_0) 
\end{pmatrix}.
\end{align*}
This implies that
\begin{align*}
	P \begin{pmatrix} 0 & 0 \\ 0 & 1 \end{pmatrix} \begin{pmatrix}
\beta_1(\xi_0) \\
\beta_2(\xi_0) 
\end{pmatrix}
	&{}= P \lim_{t\to\infty} t^{-\mu |W_1(\xi_0)|^2 } 
	P^{-1} \begin{pmatrix}
w(\xi_0) \\
\overline{w(\xi_0)}
\end{pmatrix}\\
 &{}=\lim_{t\to\infty} t^{-\mu |W_1(\xi_0)|^2 } 
	\begin{pmatrix}
w(\xi_0) \\
\overline{w(\xi_0)}
\end{pmatrix} \in Z,
\end{align*}
since $Z$ is closed.
Further, if $\varepsilon_1$ is sufficiently small then one has
\begin{align*}
	&\lim_{t\to\infty} t^{\mu |W_1(\xi_0)|^2 } \left(
	P^{-1} \begin{pmatrix}
w(\xi_0) \\
\overline{w(\xi_0)}
\end{pmatrix}
- t^{\mu |W_1(\xi_0)|^2}
\begin{pmatrix} 0 & 0 \\ 0 & 1 \end{pmatrix} \begin{pmatrix}
\beta_1(\xi_0) \\
\beta_2(\xi_0) 
\end{pmatrix}
\right)\\
&{}=\lim_{t\to\infty}  
\begin{pmatrix} 1 & 0 \\ 0 & 0 \end{pmatrix}
	Q(t)^{-1}
	P^{-1} \begin{pmatrix}
w(\xi_0) \\
\overline{w(\xi_0)}
\end{pmatrix}\\
&{}\quad + \lim_{t\to\infty}  
\begin{pmatrix} 0 & 0 \\ 0 & t^{2\mu |W_1(\xi_0)|^2} \end{pmatrix}
\left(
Q(t)^{-1}
	P^{-1} \begin{pmatrix}
w(\xi_0) \\
\overline{w(\xi_0)}
\end{pmatrix}-
 \begin{pmatrix}
\beta_1(\xi_0) \\
\beta_2(\xi_0) 
\end{pmatrix}
\right)\\
&{}=\begin{pmatrix} 1 & 0 \\ 0 & 0 \end{pmatrix} \begin{pmatrix}
\beta_1(\xi_0) \\
\beta_2(\xi_0) 
\end{pmatrix}
\end{align*}
by means of \eqref{asymppf1}.
Hence
one sees that
\[
	P\begin{pmatrix} 1 & 0 \\ 0 & 0 \end{pmatrix} \begin{pmatrix}
\beta_1 \\
\beta_2 
\end{pmatrix}
\]
takes value in $Z$ by arguing as above. Hence,
\[
	P \begin{pmatrix}
\beta_1 \\
\beta_2 
\end{pmatrix}
	= P\begin{pmatrix} 1 & 0 \\ 0 & 0 \end{pmatrix} \begin{pmatrix}
\beta_1 \\
\beta_2 
\end{pmatrix} + P\begin{pmatrix} 0 & 0 \\ 0 & 1 \end{pmatrix} \begin{pmatrix}
\beta_1 \\
\beta_2 
\end{pmatrix}
\]
also takes value in $Z$. 
%We remark that $\| P \| \sim_{\lambda_3,\lambda_6} 1$ uniformly in $\alpha \neq 0$.
Thus, there exists $W_2 \in L^\infty(\{W_1\neq0\})$ such that
\[
	\begin{pmatrix}
\beta_1 \\
\beta_2 
\end{pmatrix}
	= P^{-1} 	\begin{pmatrix}
  W_2 \\
\overline{ W_2}
\end{pmatrix}.
\]
Then it follows that 
\begin{equation} \label{asymp.w.in.L^infty}
w 
 = \begin{pmatrix} 1 & 0  \end{pmatrix} P Q(t) 
P^{-1} 	\begin{pmatrix}
W_2 \\
\overline{W_2}
\end{pmatrix} + O(t^{-1/4 + C_1 \varepsilon^2_1 + 2\mu C_2 \varepsilon^2_1 + C_4 \varepsilon_1^2})
\end{equation}
in $L^\infty(\{W_1\neq0\})$.
We remark that,
in view of \eqref{asympa0},
the identity is valid also on the set $\{W_1=0\}$ by using the convention $P Q(t) P^{-1} = I_2$ and extending $W_2$ to the set by $W_2(\xi):=\beta_0(\xi)$.
Note that the extension $W_2$ belongs to $L^\infty(\mathbb{R})$.
%that is,
%\begin{equation}    \label{asymp.w.in.L^infty}
%\begin{aligned}
%w ={}& \lambda_6 e^{2i\theta} \beta_1 t^{-\mu |\alpha|^2} 
%       + (3\lambda_1 - 2 \lambda_6 - i\mu) \beta_2 t^{\mu |\alpha|^2} 
%       + O(t^{-1/4 + C_1 \varepsilon^2_1 + 2\mu C_2 \varepsilon^2_1 + \delta}).
%\end{aligned}
%\end{equation}
Let us now recall that $w = e^{3i\lambda_1 (\Phi_1-\theta_1)} v_2$. Then, in $L^\infty(\R)$, 
we have 
\begin{align*}
u_2 (t) 
%&= U(t) \mathcal{F}^{-1}v_2 (t) \\
&= M(t) D(t) \mathcal{F} M(t) \mathcal{F}^{-1}v_2(t) \\
&= M(t) D(t) v_2 (t) + O(t^{-3/4 + C_4 \varepsilon_1^2}) \\
&= M(t) D(t) e^{-3 i \lambda_1 (\Phi_1-\theta_1)} w +  O(t^{-3/4 + C_4 \varepsilon_1^2}).
\end{align*}
Plugging (\ref{asymp.w.in.L^infty}) and $e^{-3i \lambda_1( \Phi_1-\theta_1)} 
= e^{-3i \lambda_1 |W_1|^2 \log t } 
+ O(t^{-1/4 + C_1 \varepsilon^2_1})$ 
in $L^\infty$ to the identity,
we see that 
%\begin{align*}
%u_2(t) 
%&= M(t) D(t) e^{-3 i \lambda_1 (|\alpha|^2 \log t + \theta_1)} \\
%&\qquad \times \left[ \lambda_6 e^{2i\theta} \beta_1 t^{-\mu |\alpha|^2} 
%       + (3\lambda_1 - 2 \lambda_6 - i\mu) \beta_2 t^{\mu |\alpha|^2} \right] \\
%&\qquad \qquad + O(t^{-3/4 + C_1 \varepsilon^2_1 + 2\mu C_2 \varepsilon^2_1 + \delta}). 
%\end{align*}
\begin{align*}
u_2(t) 
&= M(t) D(t) e^{-3 i \lambda_1 |W_1|^2 \log t }  \begin{pmatrix} 1 & 0  \end{pmatrix} P Q(t) 
P^{-1} 	\begin{pmatrix}
W_2 \\
\overline{W_2}
\end{pmatrix}\\
%&\qquad \times \left[ \lambda_6 e^{2i\theta} \beta_1 t^{-\mu |\alpha|^2} 
%       + (3\lambda_1 - 2 \lambda_6 - i\mu) \beta_2 t^{\mu |\alpha|^2} \right] \\
&\qquad \qquad + O(t^{-3/4 + C_1 \varepsilon^2_1 + 2\mu C_2 \varepsilon^2_1 + C_4 \varepsilon_1^2}).
\end{align*}
%Write $e^{-3i\lambda_1 \theta_1} e^{2i \theta} \beta_1 = W_{2,1}$, 
%$e^{-3i\lambda_1 \theta_1} \beta_2 = W_{2,2}$ and 
%Noting that $|\alpha| = |W_1|$ and $e^{i\theta-3i\lambda_1 \theta_1}=W_1/|W_1|$ if $|\alpha|\neq0$, o
One sees that the leading part is written as in \eqref{asymp2}. 
This completes the proof of Theorem \ref{1.1}.
%Then we obtain (\ref{asymp2}). 
\end{proof}

%%%%%%%%%%%%%%%%%%%%%%%%%%%%%%%
%%%%%%%%%%%%%%%%%%%%%%%%%%%%%%%
\section{Proof of Theorem \ref{1.2}}
%%%%%%%%%%%%%%%%%%%%%%%%%%%%%%%
%%%%%%%%%%%%%%%%%%%%%%%%%%%%%%%
%Let ${{\mathcal L}}= i \partial_t+(1/2)\partial^2_x$. 
As mentioned in the introduction, we will 
construct the solution by solving
the final state problem:
\begin{equation}\tag{\ref{NLS110}}
\left\{ 
\begin{aligned}
   & {{\mathcal L}}u_1 
       = 3 \lambda_1 |u_1|^2 u_1, \quad && t \in \re, x \in \re,\\
   & {{\mathcal L}}u_2 
       = \lambda_6 (2|u_1|^2 u_2 + u^2_1 \overline{u}_2), \quad && t \in \re, x \in \re,\\
    &\| u_j(t)-u_{\mathrm{ap},j}(t)\|_{L^\infty} = o(\|u_{\mathrm{ap},j}(t)\|_{L^\infty}),  && j=1,2.
\end{aligned}
\right. 
\end{equation}

\subsection{Reformulation as an integral equation}
Let us first reformulate (\ref{NLS110}) as an integral equation employing the argument in \cite{HN3}. 
Introduce a modified approximate solution $\widetilde{u}_{\mathrm{ap},j}(t)$ defined by
\begin{equation*}
\widetilde{u}_{\mathrm{ap},j}(t):=
U(t){{\mathcal F}}^{-1}[F_{j}(t,\xi)]
\end{equation*}
with $F_{j}$ given by (\ref{F11}) and (\ref{F12}).
Let $v_{j}:=u_{j}-u_{\mathrm{ap},j}$ $(j=1,2)$ and let 
\begin{align*}
\tilde{{\mathcal N}}_{1}(u_{1},u_{2})&= 3\lambda_1 |u_1|^2 u_1,\\
\tilde{{\mathcal N}}_{2}(u_{1},u_{2})&= \lambda_6( 2|u_1|^2 u_2 + u^2_1 \overline{u}_2). 
\end{align*}
Then, at least formally, we have for $j=1,2$,
\begin{align}
{{\mathcal L}}(u_j - \widetilde{u}_{\mathrm{ap},j})
%&=  {{\mathcal L}}u_{j}-{{\mathcal L}}\widetilde{u}_{\mathrm{ap},j}
%\nonumber\\
&= \tilde{{\mathcal N}}_{j}(u_{1},u_{2})-{{\mathcal L}}\widetilde{u}_{\mathrm{ap},j}
\nonumber\\
&= \tilde{{\mathcal N}}_{j}(v_{1}+u_{\mathrm{ap},1},v_{2}+u_{\mathrm{ap},2})-{{\mathcal L}}\widetilde{u}_{\mathrm{ap},j}
\nonumber\\
&= \tilde{{\mathcal N}}_{j}(v_{1}+u_{\mathrm{ap},1},v_{2}+u_{\mathrm{ap},2})
-\tilde{{\mathcal N}}_{j}(u_{\mathrm{ap},1},u_{\mathrm{ap},2})+\mathcal{E}_{j},
\label{diff}
\end{align}
where
\begin{equation}\label{E:Ej}
\mathcal{E}_{j}:=-{{\mathcal L}}\widetilde{u}_{\mathrm{ap},j}
+\tilde{{\mathcal N}}_{j}(u_{\mathrm{ap},1},u_{\mathrm{ap},2}).
\end{equation}
By the Duhamel principle, (\ref{diff}) is written as
\begin{equation}
\label{INT}
\begin{aligned}
v_{j}(t)
&= i\int_{t}^{\infty}U(t-\tau)
\left\{\tilde{{\mathcal N}}_{j}(v_{1}+u_{\mathrm{ap},1},v_{2}+u_{\mathrm{ap},2})
-\tilde{{\mathcal N}}_{j}(u_{\mathrm{ap},1},u_{\mathrm{ap},2})\right\}(\tau)d\tau\\
&\quad +i\int_{t}^{\infty}U(t-\tau)\mathcal{E}_{j}(\tau)d\tau + \mathcal{R}_j.
\end{aligned}
\end{equation}
where
\begin{equation}\label{E:Rj}
	\mathcal{R}_j := \widetilde{u}_{\mathrm{ap},j}-u_{\mathrm{ap},j}.
\end{equation}
Note that the formula is chosen so that
the integrands decay as $t\to\infty$.

To show the existence of $(v_{1},v_{2})$ satisfying (\ref{INT}), we 
shall prove that the map $\Phi=(\Phi_{1}, \Phi_{2})$ defined by 
\begin{equation}
\begin{aligned}
&\Phi_{j}[(v_{1},v_{2})](t)\\
&= i\int_{t}^{\infty}U(t-\tau)
\left\{\tilde{{\mathcal N}}_{j}(v_{1}+u_{\mathrm{ap},1},v_{2}+u_{\mathrm{ap},2})
-{\tilde{\mathcal N}}_{j}(u_{\mathrm{ap},1},u_{\mathrm{ap},2})\right\}(\tau)d\tau
\nonumber\\
&\quad +i\int_{t}^{\infty}U(t-\tau)\mathcal{E}_{j}(\tau)d\tau + \mathcal{R}_j
%\label{INT1}
\end{aligned}
\end{equation}
is a contraction on
\begin{align*}
{{{\bf X}}}_{T}&=
\{(v_{1},v_{2})\in C([T,\infty);L^2(\R);\|(v_{1},v_{2})\|_{{{\bf X}_T}}\le 1\},\\
\|(v_{1},v_{2})\|_{{{\bf X}_T}}
&=\sup_{t\ge T}
(t^{\tilde{\nu}+\frac12}\|v_{1}\|_{L_x^{2}}+t^{\tilde{\nu}}\|Jv_{1}\|_{L_x^{2}}
+t^{\tilde{\nu}+\frac12-\delta}\|v_{2}\|_{L_x^{2}}
+t^{\tilde{\nu}-\delta}\|Jv_{2}\|_{L_x^{2}})
\end{align*}
for some $T\ge2$,
where $\tilde{\nu}:=\nu-1/2\in (0,1/2)$ and $\delta\in(0,\tilde{\nu})$ are
arbitrarily small numbers.

\begin{rem}
As seen below, the space ${\bf X}_T$ is chosen so that decay of $\|v_j\|_{L^\infty}$ can be deduced. 
\end{rem}

\begin{rem}
Our definition of a \emph{solution} to \eqref{NLS110} is a pair of functions $(v_1+u_{\mathrm{ap},1},v_2+u_{\mathrm{ap},2})$ such that $(v_1,v_2)\in {\bf X}_T$ holds for some $T\ge 2$ and that
$(v_1,v_2)$ satisfies \eqref{INT} in $C([T,\infty),L^2)$ sense.
Under a suitable regularity assumption on $u_{\mathrm{ap},j}$, one may see that $u_j:=v_j+u_{\mathrm{ap},j}$
belongs to $C([T,\infty),H^1)$. Then, $(u_1,u_2)$ satisfies the differential equation \eqref{NLS110} on $[T,\infty)$
in the $H^{-1}$ sense.
\end{rem}

\subsection{Estimates on approximate solutions}

Let us summarize decay properties 
of the given asymptotic profiles $u_{\mathrm{ap},j}$ and $\widetilde{u}_{\mathrm{ap},j}$
and of the differences $\mathcal{R}_j$ of those two. 
We also give an estimate of the error term $\mathcal{E}_j$.
We assume $\|W_1\|_{H_{\xi}^{2}}\le\varepsilon_{1}\le1$ and 
$\|W_2\|_{H_{\xi}^{2}}\le\varepsilon_{2}$.
\begin{lem}
Let $ u_{\mathrm{ap},1} $ be given by \eqref{E:uapj} and let $ \mathcal{R}_1 $ be defined by \eqref{E:Rj}.
It holds that
\begin{align}
\|u_{\mathrm{ap},1}\|_{L_x^{\infty}}
&\lesssim \varepsilon_{1}t^{-\frac12},\label{E:uap1-1}\\
%\|u_{\mathrm{ap},1}\|_{L_x^2}
%&\le&\|\alpha\|_{L_{\xi}^2},\\
\|\mathcal{R}_1\|_{L_x^2}
&\lesssim \varepsilon_{1}t^{-1}(\log t)^{2},
\label{E:uap1-2}\\
\|\mathcal{R}_1\|_{L_x^{\infty}}
&\lesssim \varepsilon_{1}t^{-\frac54}(\log t)^{2},%\|{{\mathcal F}}^{-1}F_1\|_{H_x^{0,2}},
\label{E:uap1-3}\\
\|Ju_{\mathrm{ap},1}\|_{L_x^2}
&\lesssim \varepsilon_{1}\log t
%\|x{{\mathcal F}}^{-1}F_1\|_{L_x^2}
%\le\log t\|\alpha\|_{H_{\xi}^1}
%(1+\|\alpha\|_{H_{\xi}^1}^2)
\label{E:uap1-4},\\
\|J\mathcal{R}_1\|_{L_x^2}
&\lesssim \varepsilon_{1} t^{-\frac12}(\log t)^{2},%\|{{\mathcal F}}^{-1}F_1\|_{H_x^{0,2}}.
\label{E:uap1-5}
\end{align}
for any $t\ge2$. 
In particular, 
\begin{equation}\label{E:uap1-6}
\|\widetilde{u}_{\mathrm{ap},1}\|_{L_x^{\infty}}\lesssim\varepsilon_{1}t^{-\frac12}
\end{equation}
for $t\ge2$.
\end{lem}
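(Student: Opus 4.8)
The plan is to push everything through the factorization $U(t)=M(t)D(t)\mathcal{F}M(t)$. From \eqref{E:uapj} one has $u_{\mathrm{ap},1}=M(t)D(t)F_{1}(t)$, while $\widetilde{u}_{\mathrm{ap},1}=U(t)\mathcal{F}^{-1}F_{1}(t)=M(t)D(t)\mathcal{F}M(t)\mathcal{F}^{-1}F_{1}(t)$, whence
\[
\mathcal{R}_{1}=M(t)D(t)\bigl(\mathcal{F}M(t)\mathcal{F}^{-1}-1\bigr)F_{1}(t).
\]
Since $M(t)$ is an isometry on $L^{2}$ and on $L^{\infty}$ and $D(t)$ is an $L^{2}$-isometry which maps $L^{\infty}$ into itself with norm $t^{-1/2}$, each of the five estimates reduces to a bound on $(\mathcal{F}M(t)\mathcal{F}^{-1}-1)F_{1}(t)$ (or the same expression applied to $\partial_{\xi}F_{1}$), together with Sobolev norms of $F_{1}(t,\cdot)$ itself.

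Next I would record the relevant norms of $F_{1}(t,\xi)=W_{1}(\xi)e^{-3i\lambda_{1}|W_{1}(\xi)|^{2}\log t}$. Since the phase has unit modulus, $\|F_{1}(t)\|_{L^{\infty}}=\|W_{1}\|_{L^{\infty}}\lesssim\|W_{1}\|_{H^{1}}\lesssim\varepsilon_{1}$, which gives \eqref{E:uap1-1} at once after multiplying by the $t^{-1/2}$ from $D(t)$. Each $\xi$-derivative falling on the phase contributes one factor of $\log t$, so by the product rule, the algebra property of $H^{1}$, the embedding $H^{1}\hookrightarrow L^{\infty}$, and $\|W_{1}\|_{H^{2}}\le\varepsilon_{1}\le1$ (which lets one absorb the higher powers of $\|W_{1}\|_{H^{1}}$ produced by the nonlinear terms into a single $\varepsilon_{1}$), one obtains for $t\ge2$
\[
\|F_{1}(t)\|_{\dot H^{1}}\lesssim\varepsilon_{1}\log t,\qquad \|F_{1}(t)\|_{\dot H^{2}}\lesssim\varepsilon_{1}(\log t)^{2}.
\]

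For $\mathcal{R}_{1}$ I would use remainder bounds for $\mathcal{F}M\mathcal{F}^{-1}-1$. From Lemma \ref{lem:2.1}, $\mathcal{F}M\mathcal{F}^{-1}=U(-1/t)$; using $|M-1|\le|x|^{2}/(4|t|)$ one gets $\|(\mathcal{F}M\mathcal{F}^{-1}-1)h\|_{L^{2}}\lesssim t^{-1}\|h\|_{\dot H^{2}}$, using $|M-1|\le(|x|^{2}/(4|t|))^{1/2}$ together with unitarity of $U(-1/t)$ on $\dot H^{1}$ one gets $\|(\mathcal{F}M\mathcal{F}^{-1}-1)h\|_{\dot H^{1}}\lesssim t^{-1/2}\|h\|_{\dot H^{2}}$, and the Gagliardo-Nirenberg inequality $\|g\|_{L^{\infty}}\lesssim\|g\|_{L^{2}}^{1/2}\|g\|_{\dot H^{1}}^{1/2}$ combines these into $\|(\mathcal{F}M\mathcal{F}^{-1}-1)h\|_{L^{\infty}}\lesssim t^{-3/4}\|h\|_{\dot H^{2}}$. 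Taking $h=F_{1}(t)$ and inserting the $\dot H^{2}$-bound yields $\|\mathcal{R}_{1}\|_{L^{2}}\lesssim t^{-1}\|F_{1}(t)\|_{\dot H^{2}}\lesssim\varepsilon_{1}t^{-1}(\log t)^{2}$, which is \eqref{E:uap1-2}, and $\|\mathcal{R}_{1}\|_{L^{\infty}}=t^{-1/2}\|(\mathcal{F}M\mathcal{F}^{-1}-1)F_{1}(t)\|_{L^{\infty}}\lesssim t^{-5/4}\|F_{1}(t)\|_{\dot H^{2}}\lesssim\varepsilon_{1}t^{-5/4}(\log t)^{2}$, which is \eqref{E:uap1-3}.

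Finally, for the Galilean-weighted norms I would use the conjugations $J(t)=M(t)(it\partial_{x})M(t)^{-1}$ and $J(t)U(t)=U(t)x$. The first gives $Ju_{\mathrm{ap},1}=iM(t)D(t)\partial_{\xi}F_{1}(t)$, so $\|Ju_{\mathrm{ap},1}\|_{L^{2}}=\|\partial_{\xi}F_{1}(t)\|_{L^{2}}\lesssim\varepsilon_{1}\log t$, which is \eqref{E:uap1-4}. Combining the two identities gives $J\widetilde{u}_{\mathrm{ap},1}=iM(t)D(t)\mathcal{F}M(t)\mathcal{F}^{-1}\partial_{\xi}F_{1}(t)$, hence $J\mathcal{R}_{1}=iM(t)D(t)(\mathcal{F}M(t)\mathcal{F}^{-1}-1)\partial_{\xi}F_{1}(t)$ and $\|J\mathcal{R}_{1}\|_{L^{2}}\lesssim t^{-1/2}\|\partial_{\xi}F_{1}(t)\|_{\dot H^{1}}=t^{-1/2}\|F_{1}(t)\|_{\dot H^{2}}\lesssim\varepsilon_{1}t^{-1/2}(\log t)^{2}$, which is \eqref{E:uap1-5}; then \eqref{E:uap1-6} follows from $\|\widetilde{u}_{\mathrm{ap},1}\|_{L^{\infty}}\le\|u_{\mathrm{ap},1}\|_{L^{\infty}}+\|\mathcal{R}_{1}\|_{L^{\infty}}$ and $t\ge2$. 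There is no genuine obstacle here; the only points requiring a little care are to use the $\dot H^{2}$-based smoothing bounds (rather than the weaker $\dot H^{1}$ ones behind Lemma \ref{lem:2.1}), so that the sharp powers $t^{-1}$ in \eqref{E:uap1-2} and $t^{-5/4}$ in \eqref{E:uap1-3} come out, and to keep the $\varepsilon_{1}$-dependence linear by peeling off one clean factor of $\|W_{1}\|_{H^{2}}$ from each differentiated term.
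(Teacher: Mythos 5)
Your proposal is correct and follows essentially the same route as the paper: the factorization $\mathcal{R}_1=M(t)D(t)(\mathcal{F}M(t)\mathcal{F}^{-1}-1)F_1(t)$, the $\dot H^2$-based refinements of Lemma \ref{lem:2.1} (namely $t^{-1}\|\cdot\|_{\dot H^2}$ in $L^2$ and $t^{-3/4}\|\cdot\|_{\dot H^2}$ in $L^\infty$ via Gagliardo--Nirenberg), the logarithmic bounds $\|F_1\|_{\dot H^1}\lesssim\varepsilon_1\log t$, $\|F_1\|_{\dot H^2}\lesssim\varepsilon_1(\log t)^2$ using $\|W_1\|_{H^2}\le\varepsilon_1\le1$, and the identities $Ju_{\mathrm{ap},1}=iM(t)D(t)\partial_\xi F_1$, $J\mathcal{R}_1=iM(t)D(t)(\mathcal{F}M\mathcal{F}^{-1}-1)\partial_\xi F_1$. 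The only cosmetic point is that the $\dot H^1$ bound on the remainder uses commutation of the Fourier multiplier $U(-1/t)$ with $\partial_x$ (plus the $|M-1|$ bound) rather than its unitarity, but this does not affect the argument.
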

\begin{proof}
By definition, one immediately obtains \eqref{E:uap1-1}.
One has
\[
	\mathcal{R}_1
	=M(t) D(t) (\mathcal{F} M(t)\mathcal{F}^{-1}-1) F_1(t).
\]
Taking $L^p$ norm, we obtain
\begin{align*}
	\|\mathcal{R}_1\|_{L^p}
	&{}= t^{-\frac12+\frac1p}\|(\mathcal{F} M(t)\mathcal{F}^{-1}-1) F_1(t)\|_{L^p}.
\end{align*}	
When $p=2$, we have
\begin{align*}
	\|(\mathcal{F} M(t)\mathcal{F}^{-1}-1) F_1(t)\|_{L^2}&{}\lesssim t^{-1} \|\pt_x^2 F_1\|_{L^2}\\
	&{}\lesssim t^{-1} \|W_1\|_{H^2} \<\|W_1\|_{H^2}\>^4 (\log t)^2
\end{align*}
for $t\ge2$. Thus, \eqref{E:uap1-2} follows.
When $p=\infty$, mimicking the proof of \eqref{E:U-1}, one has
\begin{align*}
	\| (\mathcal{F} M \mathcal{F}^{-1} - 1) F_1(t)\|_{L^\infty}
	&{}\lesssim \| (\mathcal{F} M \mathcal{F}^{-1} - 1) F_1(t) \|_{L^2}^\frac12
	\|  (\mathcal{F} M \mathcal{F}^{-1} - 1) F_1(t) \|_{\dot{H}^1}^\frac12 \\
	&{}\lesssim (t^{-1} \|\pt_x^2 F_1\|_{L^2})^\frac12 (t^{-\frac12} \||\pt_x| F_1\|_{\dot{H}^1})^\frac12 \\
	&{}\lesssim t^{-\frac34} \|\pt_x^2 F_1\|_{L^2}\\
	&{}\lesssim t^{-\frac34} \|W_1\|_{H^2} \<\|W_1\|_{H^2}\>^4 (\log t)^2.
\end{align*}
We obtain \eqref{E:uap1-3}. 
Since $J(t) = M(t) (it\pt_x) M(-t)$, we have
\[
	Ju_{\mathrm{ap},1} = M(t) (it\pt_x) D(t) F_1(t) = M(t)D(t) i \pt_x F_1(t).
\]
Then, \eqref{E:uap1-4} immediately follows. Further, arguing as in the proof of \eqref{E:uap1-2},
one obtains \eqref{E:uap1-5}.
Finally, \eqref{E:uap1-6} is a consequence of \eqref{E:uap1-1} and \eqref{E:uap1-3}.
\end{proof}

\begin{lem}
Let $ u_{\mathrm{ap},2} $ be given by \eqref{E:uapj} and let $ \mathcal{R}_2 $ be defined by \eqref{E:Rj}.
It holds that
\begin{align}
\|u_{\mathrm{ap},2}\|_{L_x^{\infty}}
&\lesssim \varepsilon_{2}t^{-\frac12+C\varepsilon_{1}^{2}},\label{E:uap2-1}\\
\|\mathcal{R}_2\|_{L_x^2}
&\lesssim \varepsilon_{2}t^{-1+C\varepsilon_{1}^{2}}(\log t)^{2},
\label{E:uap2-2}\\
\|\mathcal{R}_2\|_{L_x^{\infty}}
&\lesssim \varepsilon_{2}t^{-\frac54+C\varepsilon_{1}^{2}}(\log t)^{2}, \label{E:uap2-3}\\
\|Ju_{\mathrm{ap},2}\|_{L_x^2}
&\lesssim \varepsilon_{2}t^{C\varepsilon_{1}^{2}}\log t,\label{E:uap2-4}\\
\|J\mathcal{R}_2\|_{L_x^2}
&\lesssim \varepsilon_{2} t^{-\frac12+C\varepsilon_{1}^{2}}(\log t)^{2} \label{E:uap2-5}
\end{align}
for $t\ge2$.
In particular, 
\begin{equation}\label{E:uap2-6}
\|\widetilde{u}_{\mathrm{ap},2}\|_{L_x^{\infty}}
\lesssim\varepsilon_{2}t^{-\frac12+C\varepsilon_{1}^{2}}
\end{equation}
for $t\ge2$.
\end{lem}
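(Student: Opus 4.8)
The plan is to argue in close parallel with the preceding lemma on $u_{\mathrm{ap},1}$, the only new feature being that $F_2$ now grows in time (roughly like $t^{\mu|W_1|^2}$, which is $\le t^{C\varepsilon_1^2}$) and that its $\xi$-derivatives carry logarithmic factors. First I would record the operator identities reducing everything to bounds on $F_2$ and its $\xi$-derivatives. Using $U(t)=M(t)D(t)\mathcal{F}M(t)$ together with \eqref{E:uapj} and \eqref{E:Rj} one gets $\mathcal{R}_2=M(t)D(t)(\mathcal{F}M(t)\mathcal{F}^{-1}-1)F_2(t)$, exactly as for $\mathcal{R}_1$; and from $J(t)=M(t)(it\partial_x)M(t)^{-1}$ with $\partial_x(D(t)g)=t^{-1}D(t)(\partial_\xi g)$ one gets $Ju_{\mathrm{ap},2}=iM(t)D(t)\partial_\xi F_2(t)$ and $J\mathcal{R}_2=iM(t)D(t)(\mathcal{F}M(t)\mathcal{F}^{-1}-1)\partial_\xi F_2(t)$. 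Since $M(t)D(t)$ is an $L^2$-isometry and $\|M(t)D(t)h\|_{L^\infty}=t^{-1/2}\|h\|_{L^\infty}$, the bounds \eqref{E:uap2-1}, \eqref{E:uap2-4} reduce to estimates for $\|F_2(t)\|_{L^\infty_\xi}$ and $\|\partial_\xi F_2(t)\|_{L^2_\xi}$, while \eqref{E:uap2-2}, \eqref{E:uap2-3}, \eqref{E:uap2-5} follow from the $L^2\to L^p$ smoothing bounds for $\mathcal{F}M\mathcal{F}^{-1}-1$ already used in the proofs of \eqref{E:uap1-2}, \eqref{E:uap1-3}, \eqref{E:uap1-5} (namely $\lesssim t^{-1}\|\partial_\xi^2\,\cdot\,\|_{L^2}$ in $L^2$ and $\lesssim t^{-3/4}\|\partial_\xi^2\,\cdot\,\|_{L^2}$ in $L^\infty$, applied to $F_2$, and the weaker $\lesssim t^{-1/2}\|\,\cdot\,\|_{\dot H^1}$ in $L^2$, applied to $\partial_\xi F_2$), once we know $\|\partial_\xi^2 F_2(t)\|_{L^2_\xi}\lesssim\varepsilon_2 t^{C\varepsilon_1^2}(\log t)^2$. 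Finally \eqref{E:uap2-6} is immediate from \eqref{E:uap2-1} and \eqref{E:uap2-3}.

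Next I would establish the three needed bounds on $F_2$. For the $L^\infty$ bound, since $|\mu\pm i\eta|=|\lambda_6|$ and $|W_1^2/|W_1|^2|=1$, the explicit formula for $\widetilde{W}_2$ gives $|\widetilde{W}_2(t,\xi)|\lesssim|W_2(\xi)|\,t^{\mu|W_1(\xi)|^2}$ for every $\xi$ (with $\widetilde{W}_2=W_2$ on $\{W_1=0\}$), hence with $\|W_1\|_{L^\infty}\lesssim\|W_1\|_{H^2}\le\varepsilon_1$ one gets $\|F_2(t)\|_{L^\infty_\xi}=\|\widetilde{W}_2(t)\|_{L^\infty_\xi}\lesssim\varepsilon_2 t^{C\varepsilon_1^2}$, which is \eqref{E:uap2-1}. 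For the $L^2$ bounds on $\xi$-derivatives the cleanest route is to note that, as in the derivation of \eqref{eq.w} (with $v_j$ replaced by $F_j$ and $\Phi_1-\theta_1$ by $|W_1|^2\log t$), the function $\widetilde{W}_2$ solves the linear ODE
\begin{equation*}
i\partial_t\widetilde{W}_2=-\eta t^{-1}|W_1(\xi)|^2\widetilde{W}_2+\lambda_6 t^{-1}W_1(\xi)^2\overline{\widetilde{W}_2},\qquad\widetilde{W}_2(1,\xi)=W_2(\xi),
\end{equation*}
whose coefficients are smooth in $\xi$ with second $\xi$-derivatives in $L^2$, so the apparent singularity $W_1^2/|W_1|^2$ in the explicit formula plays no role. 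Differentiating this ODE $k$ times in $\xi$ for $k=0,1,2$ and running an energy estimate, the $-\eta$ term is conservative and contributes nothing, the $\lambda_6$ term contributes a growth $t^{C\varepsilon_1^2}$ via Gronwall with an \emph{absolute} constant $C$ (this is where $\|W_1\|_{H^2}\le\varepsilon_1\le1$ is used), and the forcing terms — the $\xi$-derivatives of $|W_1|^2$, $W_1^2$ hitting lower-order derivatives of $\widetilde{W}_2$ — are bounded in $L^2$ by $\lesssim\varepsilon_1^2\varepsilon_2\,t^{-1+C\varepsilon_1^2}(\log t)^{k-1}$ using $H^1\hookrightarrow L^\infty$ and $H^{1/4}\hookrightarrow L^4$ on $\R$; integrating $\tau^{-1}(\log\tau)^{k-1}$ produces one more power of $\log t$, yielding $\|\partial_\xi^k\widetilde{W}_2(t)\|_{L^2_\xi}\lesssim\varepsilon_2 t^{C\varepsilon_1^2}(\log t)^k$. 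Since $F_2=\widetilde{W}_2\,e^{-3i\lambda_1|W_1|^2\log t}$ and each differentiation of this phase produces at most one more factor $\lesssim\varepsilon_1^2\log t$ (in $L^\infty$, resp. $L^2$), the same bound $\lesssim\varepsilon_2 t^{C\varepsilon_1^2}(\log t)^k$ holds for $\|\partial_\xi^k F_2(t)\|_{L^2_\xi}$, $k=0,1,2$.

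Feeding these inputs into the identities of the first step yields all of \eqref{E:uap2-1}–\eqref{E:uap2-6}: the time weights $t^{-1}$, $t^{-5/4}$, $t^{-1/2}$ come from the smoothing estimates, the prefactor $\varepsilon_2 t^{C\varepsilon_1^2}$ and the powers of $\log t$ from the derivative bounds on $F_2$. I expect the only genuine obstacle to be the bookkeeping in that second step: one must check that the time exponent stays of the form $C\varepsilon_1^2$ with $C$ independent of the data (which is exactly why $\|W_1\|_{H^2}\le\varepsilon_1\le1$ is imposed, so that the various $\langle\|W_1\|_{H^2}\rangle$ factors are $\lesssim1$) and that no spurious extra power of $\log t$ creeps in; routing the derivative estimates through the linear ODE for $\widetilde{W}_2$, rather than differentiating the explicit formula that is formally singular at the zeros of $W_1$, is what makes this transparent and keeps the argument a near-verbatim analogue of the proof of the previous lemma.
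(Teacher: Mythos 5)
Your proposal is correct, and its overall skeleton is exactly the paper's: every estimate is reduced to bounds on $F_2$ and its first two $\xi$-derivatives through the identities $\mathcal{R}_2=M(t)D(t)(\mathcal{F}M(t)\mathcal{F}^{-1}-1)F_2(t)$ and $Ju_{\mathrm{ap},2}=M(t)D(t)\,i\partial_\xi F_2(t)$ together with the smoothing bounds behind Lemma \ref{lem:2.1}; the paper states only that the proof is similar to the previous lemma and then discusses $F_2$. Where you genuinely diverge is in how the bounds $\|\partial_\xi^k F_2(t)\|_{L^2}\lesssim\varepsilon_2 t^{C\varepsilon_1^2}(\log t)^k$, $k=1,2$, are obtained. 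The paper differentiates the explicit formula for $\widetilde{W}_2$ and spends its effort on the apparently singular factor $\frac{W_1^2}{|W_1|^2}(t^{\mu|W_1|^2}-t^{-\mu|W_1|^2})$, verifying continuity and square-integrability of its first two derivatives near the zeros of $W_1$ by Taylor-type expansions ($F=O(W_1^2)$, $\partial_\xi F=O(W_1)$, and so on). You instead use that $\widetilde{W}_2$ solves the non-singular linear ODE $i\partial_t\widetilde{W}_2=-\eta t^{-1}|W_1|^2\widetilde{W}_2+\lambda_6 t^{-1}W_1^2\overline{\widetilde{W}_2}$ with data $W_2$ at $t=1$ (this is indeed \eqref{eq.w} with $S_2=R_2=0$, and the explicit $\widetilde{W}_2$ is its solution), differentiate in $\xi$, and run energy/Gronwall estimates: the $\eta$-term is skew and drops out, the $\lambda_6$-term produces growth $t^{|\lambda_6|\|W_1\|_{L^\infty}^2}\le t^{C\varepsilon_1^2}$ with an absolute $C$, and the forcing terms (derivatives of $|W_1|^2$, $W_1^2$ hitting lower-order derivatives of $\widetilde{W}_2$) generate the $(\log t)^k$ factors; the $L^\infty$ bound and \eqref{E:uap2-1} follow from the explicit formula via $|\mu\pm i\eta|=|\lambda_6|$, as in the paper. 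Your route buys a cleaner argument — the singular factor never appears and the source of the $t^{C\varepsilon_1^2}$ growth and the logarithms is transparent — while the paper's route works directly with the closed-form solution and yields the sharper exponent $\mu\|W_1\|_{L^\infty}^2$ (irrelevant for this lemma). The one point you should make explicit is the justification of taking two $\xi$-derivatives of the parameter-dependent ODE: with $W_1\in H^2$ the coefficients are only $C^1$ in $\xi$, so the $k=2$ energy estimate should be run on difference quotients or after mollifying $W_1$ and passing to the limit — a routine step, comparable in weight to the integrability checks the paper carries out near the zero set of $W_1$.
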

\begin{proof}
The proof of the estimate is similar to that for the corresponds estimate in the previous lemma.
We only give estimates on $F_2(t)$.
One has
\[
	\|F_2(t)\|_{L^p} \lesssim \|W_2\|_{L^p} t^{\tilde{\nu} \|W_1\|_{L^\infty}^2} \lesssim \varepsilon_2 t^{C\varepsilon_1^2}
\]
for $t\ge2$. This yields \eqref{E:uap2-1} and \eqref{E:uap2-4}.
In order to estimate $\| F_2\|_{H^2}$, let us first check that $F_2 \in C^1(\R)$.
The continuity of the factor
\[
	F: =\tfrac{W_1(\xi)^2}{|W_1(\xi)|^2} 
              (   t^{\tilde{\nu} |W_1(\xi)|^2} - t^{-\tilde{\nu} |W_1(\xi)|^2} )
\]
 at the zero point of $W_1$ is only problematic.
The continuity of $F$ at such points follows from the identity
\[
	F = W_1^2(2\tilde{\nu} \log t + o(1))=O(W_1^2)
\]
as $|W_1| \to 0$.
The continuity of $\pt_x F$ at such points is also verified by %combining the identity and
\begin{align*}
	\pt_x F 
%	={}& \left(2W_1 \pt_x W_1 - \frac{2W_1^2\Re (\overline{W_1} \pt_x W_1)}{|W_1|^2}\right)(2\tilde{\nu} \log t + o(1))\\
%	&+ \frac{2W_1^2\Re (\overline{W_1} \pt_x W_1)}{|W_1|^2} (2+o(1))\tilde{\nu} \log t
	={}& 2W_1 \pt_x W_1 (2\tilde{\nu} \log t + o(1))
	+ \frac{2W_1^2\Re (\overline{W_1} \pt_x W_1)}{|W_1|^2} o(1)
	=O(W_1)
\end{align*}
as $|W_1| \to 0$. Integrability of $|F_2|^2+|\pt_x F_2|^2$ in $\{|W_1|\le 1\}$
also follows from these identities.
Similarly, we have $|\pt_x^2 F|\lesssim |\pt_x^2 W_1|^2 + |W_1||\pt_x^2 W_1|$ as $|W_1| \to 0$, which yields the integrability of $|\pt_x^2 F|^2$
in $\{|W_1|\le 1\}$, together with $\pt_x W_1\in H^1 \hookrightarrow L^\infty$.
Hence, we have \eqref{E:uap2-2}, \eqref{E:uap2-3}, and \eqref{E:uap2-5}.

Finally, \eqref{E:uap2-6} follows from \eqref{E:uap2-1} and \eqref{E:uap2-3}.
\end{proof}

Next we estimate the error term $\mathcal{E}_j$. 
\begin{lem}
Let $ \mathcal{E}_j $	be given by \eqref{E:Ej}. 
It holds that
\begin{equation}
\label{N5}
\|\mathcal{E}_{1}(t)\|_{L_{x}^{2}}
\lesssim\varepsilon_{1}^{3}t^{-2}(\log t)^{2},
\end{equation}
\begin{equation}
\label{N6}
\|J\mathcal{E}_{1}(t)\|_{L_{x}^{2}}
\lesssim \varepsilon_{1}^{3}t^{-\frac32}(\log t)^{2}.
\end{equation}
\begin{equation}
\|\mathcal{E}_{2}(t)\|_{L_{x}^{2}}
\lesssim \varepsilon_{1}^{2}\varepsilon_{2}t^{-2+C\varepsilon_{1}^{2}}(\log t)^{2},
\label{N7}
\end{equation}
and
\begin{equation}
\|J\mathcal{E}_{2}(t)\|_{L_{x}^{2}}
\lesssim \varepsilon_{1}^{2}\varepsilon_{2}t^{-\frac32+C\varepsilon_{1}^{2}}(\log t)^{2}
\label{N8}
\end{equation}
for $t\ge2$.
\end{lem}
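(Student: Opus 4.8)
The plan is to derive an explicit formula for $\mathcal{E}_j$ that exhibits it, up to a scalar factor of size $O(t^{-1})$, as a remainder of exactly the same shape as $\mathcal{R}_j$ from the two preceding lemmas, and then to re-run those estimates with a cubic expression in place of $F_j$.

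First I would compute $\mathcal{L}\widetilde{u}_{\mathrm{ap},j}$. Since $U(t)=e^{\frac i2 t\pt_x^2}$ solves the free Schr\"odinger equation, one has $\mathcal{L}\bigl(U(t)g(t)\bigr)=iU(t)g'(t)$ for every differentiable $L^2$-valued $g$ (the contributions $i\pt_tU(t)\cdot g$ and $\tfrac12\pt_x^2\bigl(U(t)g\bigr)$ cancel). Taking $g=\mathcal{F}^{-1}F_j(t)$, using that $(F_1,F_2)$ solves the ODE system \eqref{ODEsys}, and inserting the factorization $U(t)=M(t)D(t)\mathcal{F}M(t)$, this gives
\[
\mathcal{L}\widetilde{u}_{\mathrm{ap},1}=3\lambda_1 t^{-1}M(t)D(t)\bigl(\mathcal{F}M(t)\mathcal{F}^{-1}\bigr)\bigl[\,|F_1|^2F_1\,\bigr],\qquad
\mathcal{L}\widetilde{u}_{\mathrm{ap},2}=\lambda_6 t^{-1}M(t)D(t)\bigl(\mathcal{F}M(t)\mathcal{F}^{-1}\bigr)\bigl[\,2|F_1|^2F_2+F_1^2\overline{F_2}\,\bigr].
\]
On the other hand, since $u_{\mathrm{ap},j}=M(t)D(t)F_j$ and $M(t)D(t)$ turns a cubic product into a cubic product with a gain $t^{-1}$ — explicitly $|M(t)D(t)a|^2 M(t)D(t)b=t^{-1}M(t)D(t)(|a|^2b)$ and $\bigl(M(t)D(t)a\bigr)^2\overline{M(t)D(t)b}=t^{-1}M(t)D(t)(a^2\overline b)$ — a direct computation gives $\tilde{\mathcal N}_1(u_{\mathrm{ap},1},u_{\mathrm{ap},2})=3\lambda_1 t^{-1}M(t)D(t)[\,|F_1|^2F_1\,]$ and $\tilde{\mathcal N}_2(u_{\mathrm{ap},1},u_{\mathrm{ap},2})=\lambda_6 t^{-1}M(t)D(t)[\,2|F_1|^2F_2+F_1^2\overline{F_2}\,]$. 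Subtracting, by \eqref{E:Ej}, I obtain the clean formulas
\[
\mathcal{E}_1=-3\lambda_1 t^{-1}M(t)D(t)\bigl(\mathcal{F}M(t)\mathcal{F}^{-1}-1\bigr)g_1,\qquad
\mathcal{E}_2=-\lambda_6 t^{-1}M(t)D(t)\bigl(\mathcal{F}M(t)\mathcal{F}^{-1}-1\bigr)g_2,
\]
where $g_1:=|F_1|^2F_1$ and $g_2:=2|F_1|^2F_2+F_1^2\overline{F_2}$.

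Up to the scalar $O(t^{-1})$, these are exactly of the form $\mathcal{R}_j=M(t)D(t)(\mathcal{F}M\mathcal{F}^{-1}-1)F_j$ with $F_j$ replaced by $g_j$, so I would reuse the manipulations from the proofs of the two preceding lemmas: with $\mathcal{F}M\mathcal{F}^{-1}=U(-1/t)$ (which is unitary on $L^2$ and commutes with $\pt_x$), the pointwise bounds $|M-1|\lesssim|x|^2/|t|$ and $|M-1|\lesssim(|x|^2/|t|)^{1/2}$, and the identity $JM(t)D(t)=iM(t)D(t)\pt_x$, one obtains $\|\mathcal{E}_j\|_{L^2}\lesssim t^{-2}\|g_j\|_{H^2}$ and $\|J\mathcal{E}_j\|_{L^2}\lesssim t^{-3/2}\|g_j\|_{H^2}$ for $t\ge2$. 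It then remains to prove $\|g_1\|_{H^2}\lesssim\varepsilon_1^3(\log t)^2$ and $\|g_2\|_{H^2}\lesssim\varepsilon_1^2\varepsilon_2 t^{C\varepsilon_1^2}(\log t)^2$ for $t\ge2$. Here one must \emph{not} simply use that $H^2(\R)$ is an algebra on powers of $\|F_j\|_{H^2}$, which would cost superfluous factors of $\log t$; rather, $g_1$ and $g_2$ share the structure of $F_1$ and $F_2$, namely a polynomial amplitude in $(W_1,\overline{W_1},W_2,\overline{W_2})$ lying in $H^2$ with norm $\lesssim\varepsilon_1^3$, resp. $\lesssim\varepsilon_1^2\varepsilon_2$, and carrying no $\log t$, multiplied by bounded oscillatory/growing factors $e^{-3i\lambda_1|W_1|^2\log t}$ and $t^{\pm\mu|W_1|^2}$ of modulus $\le t^{C\varepsilon_1^2}$, each $\pt_x$ on which produces a factor $\log t$ together with an additional factor $O(\varepsilon_1^2)$; for $\varepsilon_1$ small and $t\ge2$ this gives the stated Sobolev bounds exactly as in the proofs of the two preceding lemmas. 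Substituting into the displayed inequalities yields \eqref{N5}--\eqref{N8}.

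The only point requiring care — the genuine obstacle — is to check that $g_2$ actually lies in $H^2$ near the zeros of $W_1$. Both $F_2$ and $\widetilde{W}_2$ carry the a priori singular factor $W_1^2/|W_1|^2$, but inside $g_2$ it is always multiplied by a matching factor $|W_1|^2$ (coming from $|F_1|^2$) or $W_1^2$ (coming from $F_1^2$), so the denominator cancels and $g_2$ is a genuine $H^2$ function; the $\xi$-regularity of the remaining non-smooth factors $t^{\pm\mu|W_1|^2}$ and of their first two derivatives is handled precisely as in the proof of the lemma bounding $u_{\mathrm{ap},2}$, using $W_1\in H^2\hookrightarrow C^1$. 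Once this is settled, the estimates are routine.
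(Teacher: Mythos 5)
Your proposal is correct and follows essentially the same route as the paper: using that $(F_1,F_2)$ solves the ODE system \eqref{ODEsys} to write $\mathcal{E}_j=-t^{-1}M(t)D(t)\bigl(\mathcal{F}M(t)\mathcal{F}^{-1}-1\bigr)\tilde{\mathcal N}_j(F_1,F_2)$, then reusing the $\mathcal{R}_j$-type estimates to reduce \eqref{N5}--\eqref{N8} to $\dot H^2_\xi$ bounds on $\tilde{\mathcal N}_j(F_1,F_2)$, which are obtained from the explicit structure of $F_1,F_2$ (with the $W_1^2/|W_1|^2$ factor cancelling inside the cubic terms). Your explicit treatment of the $(\log t)^2$ bookkeeping and of the regularity near the zeros of $W_1$ just makes precise what the paper cites as ``mimicking the proof of \eqref{E:uap1-2}.''
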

\begin{proof}
We remark that $(F_1,F_2)$ is a solution to \eqref{ODEsys}.
Hence,
\begin{eqnarray*}
i\pt_{t}F_{j}=t^{-1}{{\mathcal N}}_{j}(F_{1},F_{2})
\end{eqnarray*}
for $j=1,2$. Thanks to this identity, we have
\begin{equation}
\label{r1}
\begin{aligned}
{{\mathcal L}}\widetilde{u}_{\mathrm{ap},j}
&= U(t){{\mathcal F}}^{-1}(i\pt_{t}F_{j})
= U(t){{\mathcal F}}^{-1}[t^{-1}\tilde{{\mathcal N}}_{j}(F_{1},F_{2})].
\end{aligned}
\end{equation}
%Furthermore,
%\begin{eqnarray}
%\lefteqn{{{\mathcal N}}_{j}(\widetilde{u}_{\mathrm{ap},1},\widetilde{u}_{\mathrm{ap},2})}\qquad
%\nonumber\\
%&=&{{\mathcal N}}_{j}(u_{\mathrm{ap},1},u_{\mathrm{ap},2})
%+\left\{{{\mathcal N}}_{j}(\widetilde{u}_{\mathrm{ap},1},\widetilde{u}_{\mathrm{ap},2})
%-{{\mathcal N}}_{j}(u_{\mathrm{ap},1},u_{\mathrm{ap},2})\right\}
%\nonumber\\
%&=&
%t^{-1}U(t){{\mathcal F}}^{-1}[{{\mathcal N}}_{j}(F_{1},F_{2})]
%\nonumber\\
%& &-\left\{t^{-1}U(t){{\mathcal F}}^{-1}[{{\mathcal N}}_{j}(F_{1},F_{2})]-{{\mathcal N}}_{j}(u_{\mathrm{ap},1},u_{\mathrm{ap},2})\right\}
%\nonumber\\
%& &+\left\{{{\mathcal N}}_{j}(\widetilde{u}_{\mathrm{ap},1},\widetilde{u}_{\mathrm{ap},2})
%-{{\mathcal N}}_{j}(u_{\mathrm{ap},1},u_{\mathrm{ap},2})\right\},
%\label{r2}
%\end{eqnarray}
>From (\ref{r1}) and $u_{\mathrm{ap},j}=M(t)D(t) F_j$,  we obtain
\begin{align*}
\mathcal{E}_{j}
&= -U(t){{\mathcal F}}^{-1}[t^{-1}{\tilde{\mathcal N}}_{j}(F_{1},F_{2})]+{\tilde{\mathcal N}}_{j}(M(t)D(t) F_1,M(t)D(t) F_2)\\
&= -t^{-1}M(t)D(t)\mathcal{F}M(t)\mathcal{F}^{-1}\tilde{\mathcal{N}}_j(F_1, F_2)+t^{-1}M(t)D(t)\tilde{\mathcal{N}}_j(F_1, F_2) \\
&= -t^{-1}M(t)D(t)\mathcal{F}(M(t)-1)\mathcal{F}^{-1}\tilde{\mathcal{N}}_j(F_1, F_2).
\end{align*}
Hence, mimicking the proof of \eqref{E:uap1-2}, one has
\[
\|\mathcal{E}_{1}(t)\|_{L_{x}^{2}}
\lesssim t^{-2}\|\mathcal{N}_1(F_1, F_2)\|_{\dot{H}^2_\xi}\lesssim\varepsilon_{1}^{3}t^{-2}(\log t)^{2},
\]
and
\[
\|J\mathcal{E}_{1}(t)\|_{L_{x}^{2}}
\lesssim
t^{-\frac32}\|\mathcal{N}_1(F_1, F_2)\|_{\dot{H}^2_\xi}\nonumber\lesssim \varepsilon_{1}^{3}t^{-\frac32}(\log t)^{2}.
\]
In a similar way, we obtain \eqref{N7} and \eqref{N8}.
%\begin{equation}
%\|R_{2}(t)\|_{L_{x}^{2}}
%\lesssim \varepsilon_{1}^{2}\varepsilon_{2}t^{-2+C\varepsilon_{1}^{2}}(\log t)^{2},
%\label{N7}
%\end{equation}
%\begin{equation}
%\|JR_{2}(t)\|_{L_{x}^{2}}
%\lesssim \varepsilon_{1}^{2}\varepsilon_{2}t^{-\frac32+C\varepsilon_{1}^{2}}(\log t)^{2}.
%\label{N8}
%\end{equation}
\end{proof}

\subsection{Completion of the proof}

\begin{proof}[Proof of Theorem \ref{1.2}]
Fix $\nu \in (1/2,1)$ and $\delta \in (0,\nu-1/2)$. Let $\tilde{\nu}:=\nu-1/2$.
Then, one has $0<\delta<\tilde{\nu}<1/2$.
We first show that $\Phi$ is a map onto ${{\bf X}}_{T}$ for suitable 
$T\ge2$ if $\varepsilon_1$ is sufficiently small. 

Pick $(v_1,v_2) \in {{\bf X}}_{T}$.
We easily see  that
\begin{align*}
&|{{\mathcal N}}_{1}(v_{1}+u_{\mathrm{ap},1},v_{2}+u_{\mathrm{ap},2})
-{{\mathcal N}}_1(u_{\mathrm{ap},1},u_{\mathrm{ap},2})|\\
&\quad \lesssim
(|v_{1}|^{2} %+|u_{\mathrm{ap},1}||v_{1}|^{2}
+|u_{\mathrm{ap},1}|^{2})|v_{1}|,\\
&|{{\mathcal N}}_{2}(v_{1}+u_{\mathrm{ap},1},v_{2}+u_{\mathrm{ap},2})
-{{\mathcal N}}_2(u_{\mathrm{ap},1},u_{\mathrm{ap},2})|\\
&\quad \lesssim
|v_{1}|^{2}|v_{2}|+|u_{\mathrm{ap},2}||v_{1}|^{2}
+|u_{\mathrm{ap},1}||v_{1}||v_{2}|
+|u_{\mathrm{ap},1}||u_{\mathrm{ap},2}||v_{1}|
 +|u_{\mathrm{ap},1}|^{2}|v_{2}|.
\end{align*}
Furthermore, noting that the nonlinear terms 
are gauge invariant, we have
\begin{align*}
&|J{{\mathcal N}}_1
(v_1+u_{\mathrm{ap},1},
v_2+u_{\mathrm{ap},2})
-J{{\mathcal N}}_1
(u_{\mathrm{ap},1},
u_{\mathrm{ap},2})|\\
&\quad \lesssim
|v_1|^2|Jv_1|
+|Ju_{\mathrm{ap},1}||v_1|^2
+|u_{\mathrm{ap},1}||v_1||Jv_1|\\
&\qquad +|u_{\mathrm{ap},1}|
|Ju_{\mathrm{ap},1}||v_1|
+|u_{\mathrm{ap},1}|^2|Jv_1|,\\
&|J{{\mathcal N}}_2
(v_1+u_{\mathrm{ap},1},
v_2+u_{\mathrm{ap},2})
-J{{\mathcal N}}_2
(u_{\mathrm{ap},1},
u_{\mathrm{ap},2})|\\
&\quad \lesssim
|v_1||Jv_1||v_2|+|v_1|^2|Jv_2|
+|Ju_{\mathrm{ap},2}||v_1|^2
+|u_{\mathrm{ap},2}||v_1||Jv_1|\\
&\qquad 
+|Ju_{\mathrm{ap},1}||v_1||v_2|
+|u_{\mathrm{ap},1}||Jv_1||v_2|
+|u_{\mathrm{ap},1}||v_1||Jv_2|\\
&\qquad 
+|Ju_{\mathrm{ap},1}|
|u_{\mathrm{ap},2}||v_1|
+|u_{\mathrm{ap},1}|
|Ju_{\mathrm{ap},2}||v_1|
+|u_{\mathrm{ap},1}|
|u_{\mathrm{ap},2}||Jv_1|\\
&\qquad 
+|u_{\mathrm{ap},1}|
|Ju_{\mathrm{ap},1}||v_2|
+|u_{\mathrm{ap},1}|^2|Jv_2|.
\end{align*}
By plugging $\|v_1\|_{L^\infty} \lesssim t^{-1/2} \|v_1\|_{L^2}^{1/2} \|Jv_1\|_{L^2}^{1/2}$
and the estimates on $u_{\mathrm{ap},j}$ to those inequalities, we obtain
\begin{equation}
\label{N1}
\begin{aligned}
&\|{{\mathcal N}}_1
(v_1+u_{\mathrm{ap},1},
v_2+u_{\mathrm{ap},2})
-{{\mathcal N}}_1
(u_{\mathrm{ap},1},
u_{\mathrm{ap},2})\|_{L_x^2}\\
&\quad \lesssim
\|v_1\|_{L_x^{\infty}}^2\|v_1\|_{L_x^2}
+
\|u_{\mathrm{ap},1}\|_{L_x^{\infty}}^2
\|v_1\|_{L_x^2}\\
&\quad \lesssim
t^{-1}\|v_1\|_{L_x^2}^2\|Jv_1\|_{L_x^2}
+\|u_{\mathrm{ap},1}\|_{L_x^{\infty}}^2
\|v_1\|_{L_x^2}\\
&\quad \lesssim
t^{-3\tilde{\nu}-2}
+t^{-\tilde{\nu}-\frac32}\varepsilon_{1}^{2},
\end{aligned}
\end{equation}
\begin{equation}
\label{N2}
\begin{aligned}
&\|J{{\mathcal N}}_1
(v_1+u_{\mathrm{ap},1},
v_2+u_{\mathrm{ap},2})
-J{{\mathcal N}}_1
(u_{\mathrm{ap},1},
u_{\mathrm{ap},2})\|_{L_x^2}\\
&\quad \lesssim
\|v_1\|_{L_x^{\infty}}^2\|Jv_1\|_{L_x^{2}}
+\|Ju_{\mathrm{ap},1}\|_{L_x^2}
\|v_1\|_{L_x^{\infty}}^2
+\|u_{\mathrm{ap},1}\|_{L_x^{\infty}}
\|v_1\|_{L_x^{\infty}}
\|Jv_1\|_{L_x^{2}}\\
&\qquad 
+\|u_{\mathrm{ap},1}\|_{L_x^{\infty}}
\|Ju_{\mathrm{ap},1}\|_{L_x^{2}}
\|v_1\|_{L_x^{\infty}}
+\|u_{\mathrm{ap},1}\|_{L_x^{\infty}}^2
\|Jv_1\|_{L_x^{2}}\\
&\quad \lesssim
t^{-1}\|v_1\|_{L_x^2}\|Jv_1\|_{L_x^{2}}^2
+t^{-1}\|Ju_{\mathrm{ap},1}\|_{L_{x}^{2}}
\|v_1\|_{L_x^2}\|Jv_1\|_{L_x^{2}}\\
&\qquad 
+t^{-\frac12}\|u_{\mathrm{ap},1}\|_{L_{x}^{\infty}}
\|v_1\|_{L_x^2}^{\frac12}\|Jv_1\|_{L_x^{2}}^{\frac32}
+t^{-\frac12}
\|u_{\mathrm{ap},1}\|_{L_x^{\infty}}
\|Ju_{\mathrm{ap},1}\|_{L_x^2}
\|v_1\|_{L_x^2}^{\frac12}
\|Jv_1\|_{L_x^{2}}^{\frac12}
\\
&\qquad +
\|u_{\mathrm{ap},1}\|_{L_x^{\infty}}^2
\|Jv_1\|_{L_x^{2}}\\
&\quad \lesssim
t^{-3\tilde{\nu}-\frac32}
+t^{-2\tilde{\nu}-\frac54}\varepsilon_{1}
+t^{-\tilde{\nu}-1}\varepsilon_{1}^{2},
\end{aligned}
\end{equation}

\begin{equation}
\label{N3}
\begin{aligned}
&\|{{\mathcal N}}_2
(v_1+{u}_{\mathrm{ap},1},
v_2+{u}_{\mathrm{ap},2})
-{{\mathcal N}}_2
(u_{\mathrm{ap},1},
u_{\mathrm{ap},2})\|_{L_x^2}\\
&\quad \lesssim
\|v_1\|_{L_x^{\infty}}^2\|v_2\|_{L_x^2}
+\|u_{\mathrm{ap},2}\|_{L_x^{\infty}}
\|v_1\|_{L_x^{\infty}}
\|v_1\|_{L_x^2}
+
\|u_{\mathrm{ap},1}\|_{L_x^{\infty}}
\|v_1\|_{L_x^{\infty}}
\|v_2\|_{L_x^2}
\\
& \qquad 
+\|u_{\mathrm{ap},1}\|_{L_x^{\infty}}
\|u_{\mathrm{ap},2}\|_{L_x^{\infty}}
\|v_1\|_{L_x^{2}}
+
\|u_{\mathrm{ap},1}\|_{L_x^{\infty}}^2
\|v_2\|_{L_x^2}\\
&\quad \lesssim
t^{-3\tilde{\nu}-2+\delta}
+t^{-2\tilde{\nu}-\frac74+\delta}\varepsilon_{1}
+t^{-2\tilde{\nu}-\frac74+C\varepsilon_{1}^2}\varepsilon_{2}
+t^{-\tilde{\nu}-\frac32+\delta}\varepsilon_{1}^{2}
+t^{-\tilde{\nu}-\frac32+C\varepsilon_{1}^2}\varepsilon_{1}\varepsilon_{2},
\end{aligned}
\end{equation}
and
\begin{equation}
\label{N4}
\begin{aligned}
&\|J{{\mathcal N}}_2
(v_1+u_{\mathrm{ap},1},
v_2+u_{\mathrm{ap},2})
-J{{\mathcal N}}_2
(u_{\mathrm{ap},1},
u_{\mathrm{ap},2})\|_{L_x^2}\\
&\quad \lesssim
\|v_1\|_{L_x^{\infty}}
\|Jv_1\|_{L_x^2}
\|v_2\|_{L_x^{\infty}}
+\|v_1\|_{L_x^{\infty}}^2
\|Jv_2\|_{L_x^2}
+\|Ju_{\mathrm{ap},2}\|_{L_x^2}
\|v_1\|_{L_x^{\infty}}^2\\
& \qquad +\|u_{\mathrm{ap},2}\|_{L_x^{\infty}}
\|v_1\|_{L_x^{\infty}}
\|Jv_1\|_{L_x^2}
+\|Ju_{\mathrm{ap},1}\|_{L_x^2}
\|v_1\|_{L_x^{\infty}}
\|v_2\|_{L_x^{\infty}}\\
& \qquad +\|u_{\mathrm{ap},1}\|_{L_x^{\infty}}
\|Jv_1\|_{L_x^2}\|v_2\|_{L_x^{\infty}}
+\|u_{\mathrm{ap},1}\|_{L_x^{\infty}}
\|v_1\|_{L_x^{\infty}}
\|Jv_2\|_{L_x^2}\\
& \qquad +\|Ju_{\mathrm{ap},1}\|_{L_x^2}
\|u_{\mathrm{ap},2}\|_{L_x^{\infty}}
\|v_1\|_{L_x^{\infty}}
+\|u_{\mathrm{ap},1}\|_{L_x^{\infty}}
\|Ju_{\mathrm{ap},2}\|_{L_x^2}
\|v_1\|_{L_x^{\infty}}\\
& \qquad 
+\|u_{\mathrm{ap},1}\|_{L_x^{\infty}}
\|u_{\mathrm{ap},2}\|_{L_x^{\infty}}
\|Jv_1\|_{L_x^{2}}
+\|u_{\mathrm{ap},1}\|_{L_x^{\infty}}
\|Ju_{\mathrm{ap},1}\|_{L_x^2}
\|v_2\|_{L_x^{\infty}}\\
& \qquad 
+\|u_{\mathrm{ap},1}\|_{L_x^{\infty}}^2
\|Jv_2\|_{L_x^2}\\
&\quad \lesssim
t^{-3\tilde{\nu}-\frac32+\delta}
+t^{-2\tilde{\nu}-\frac54+\delta}\varepsilon_{1}
+t^{-2\tilde{\nu}-\frac54+C\varepsilon_{1}^2}\varepsilon_{2}
+t^{-\tilde{\nu}-1+\delta}\varepsilon_{1}^{2}\\
& \qquad 
+t^{-\tilde{\nu}-1+C\varepsilon_{1}^2}\varepsilon_{1}\varepsilon_{2}
\end{aligned}
\end{equation}
for $t\ge T$.
Applying (\ref{E:uap1-1})-(\ref{N4}) to the inequalities
\begin{align*}
&\|\Phi_j(v_{1},v_{2})(t)\|_{L^2}\\
&\quad \le \|\tilde{{\mathcal N}}_{1}(v_{1}+u_{\mathrm{ap},1},v_{2}+u_{\mathrm{ap},2})
-\tilde{{\mathcal N}}_j(u_{\mathrm{ap},1},u_{\mathrm{ap},2})\|_{L^1_tL^2_x(t,\infty)}\\
&\quad\quad + \|\mathcal{E}_j\|_{L^1_tL^2_x(t,\infty)} + \|\mathcal{R}_j(t)\|_{L^2}
\end{align*}
and
\begin{align*}
&\|J\Phi_j(v_{1},v_{2})(t)\|_{L^2}\\
&\quad \le \|J(\tilde{{\mathcal N}}_{1}(v_{1}+u_{\mathrm{ap},1},v_{2}+u_{\mathrm{ap},2})
-\tilde{{\mathcal N}}_j(u_{\mathrm{ap},1},u_{\mathrm{ap},2}))\|_{L^1_tL^2_x(t,\infty)}\\
&\quad\quad + \|J\mathcal{E}_j\|_{L^1_tL^2_x(t,\infty)} + \|J\mathcal{R}_j(t)\|_{L^2},
\end{align*}
we obtain
\begin{equation}\label{E:Phi_contraction_pf}
\begin{aligned}
\|\Phi(v_{1},v_{2})\|_{{{\bf X}}_T}%\\&\quad 
&\le \tilde{C}(T^{-2\tilde{\nu}-\frac12}
+(\varepsilon_{1}+\varepsilon_{2})T^{-\tilde{\nu}-\frac14}
+ \varepsilon_{1}\varepsilon_{2}T^{-\frac{\delta}2}\\
&\ \ \ \ \quad+(\varepsilon_{1}+\varepsilon_{2})T^{\tilde{\nu}-\frac12}(\log T)^2+\varepsilon_{1}^2 )
\end{aligned}
\end{equation}
%\begin{equation}\label{E:Phi_contraction_pf}
%\|\Phi(v_{1},v_{2})\|_{{{\bf X}}}
% \le \tilde{C}(\rho^{3}+\varepsilon_{1}\rho^{2}
%+\varepsilon_{1}(\varepsilon_{1}+\varepsilon_{2})\rho
%+\varepsilon_{1}^{2}(\varepsilon_{1}+\varepsilon_{2}))
%\end{equation}
if $C\varepsilon_1^2 \le \delta/2$, where $\tilde{C}=\tilde{C}(\tilde{\nu},\delta)$.
Pick $\tilde{\varepsilon}_1>0$ so small that 
$\tilde{C}\tilde{\varepsilon}_{1}^2 \le 1/2$ 
and $C\tilde{\varepsilon}_1^2 \le \delta/2$ are satisfied.
Then, there exists $T=T(\tilde{\varepsilon}_1(\tilde{\nu},\delta),\varepsilon_2,\tilde{\nu},\delta)=T(\nu,\delta,\varepsilon_2)>0$ such that
if $\varepsilon_{1}\le \tilde{\varepsilon}_1$ then 
the right hand side of \eqref{E:Phi_contraction_pf} is less than or equal to one. 
Then, $\Phi$ is a map onto ${{\bf X}}_{T}$. 
In a similar way, one concludes that $\Phi$ is a contraction 
map on ${{\bf X}}_{T}$ by letting $T$ even larger if necessary.
We leave the details to the reader. 
Thus, by Banach fixed point theorem 
one infers  that $\Phi$ has a unique fixed point in ${{\bf X}}_{T}$ 
which is the solution to the final state problem (\ref{NLS110}). 

Since $\|(v_1,v_2)\|_{{\bf X}_T} \le 1$, recalling that $\nu = \tilde{\nu} + 1/2$, we have
\[
	\|v_1(t)\|_{L^\infty} \lesssim t^{-\frac12} \|v_1(t)\|_{L^2}^\frac12 \|Jv_1(t)\|_{L^2}^\frac12
	\le  t^{-\nu-\frac14}
\]
and
\[
	\|v_2(t)\|_{L^\infty} \lesssim t^{-\frac12} \|v_2(t)\|_{L^2}^\frac12 \|Jv_2(t)\|_{L^2}^\frac12
	\le  t^{-\nu-\frac14+\delta}
\]
for $t\ge T$. This is \eqref{E:T2_error}. 
The estimate \eqref{E:T2_error2} immediately follows in a similar way. 
This completes the proof of Theorem \ref{1.2}. 
\end{proof}

\subsection*{Acknowledgments.} 
N.K. was supported by JSPS KAKENHI Grant Number JP17K05305. 
S.M. was supported by JSPS KAKENHI Grant Numbers JP17K14219, JP17H02854, JP18KK0386, JP21H00991, and JP21H00993.
J.S. was supported by JSPS KAKENHI Grant
Numbers %JP17H02851, 
JP19H05597, \\ 
JP20H00118, JP21H00993, and JP21K18588. 
K.U. was supported by JSPS KAKENHI Grant Number JP19K14578.
\bigskip

%\bigskip 
%
%\textsc{Faculty of Advanced Science and Technology, Kumamoto University, 
%Kumamoto, 860-8555, Japan}
%
%\textit{Email address}: \texttt{nkita@kumamoto-u.ac.jp} \\
%
%\textsc{Department of systems innovation, Graduate school of Engineering Science, Osaka University,
%Toyonaka Osaka, 560-8531, Japan}
%
%\textit{Email address}: \texttt{masaki@sigmath.es.osaka-u.ac.jp} \\
%
%\textsc{Faculty of Mathematics, Kyushu University, Fukuoka, 819-0395, Japan} 
%
%\textit{Email address}: \texttt{segata@math.kyushu-u.ac.jp} \\
%
%\textsc{Department of Applied Mathematics, Faculty of Science, Okayama University of Science, Okayama,
%700-0005, Japan}
%
%\textit{Email address}: \texttt{uriya@xmath.ous.ac.jp}

\end{document}